\newtheorem{theorem}{Theorem}
\theoremstyle{plain}
\newtheorem{claim}{Claim}
\newtheorem{corollary}{Corollary}
\newtheorem{definition}{Definition}
\newtheorem{lemma}{Lemma}
\newtheorem{proposition}{Proposition}
\newtheorem{remark}{Remark}
\numberwithin{equation}{section}
\begin{document}
\title[Integro-differential problems]{On $L^{p}$-theory for parabolic and
elliptic integro-differential equations with scalable operators in the whole
space}
\author{R. Mikulevi\v{c}ius and C. Phonsom}
\address{University of Southern California, Los Angeles}
\date{April 26, 2016}
\subjclass{45K05, 60J75, 35B65}
\keywords{non-local parabolic and elliptic integro-differential equations, L%
\'{e}vy processes}

\begin{abstract}
Elliptic and parabolic integro-differential model problems are considered in
the whole space. By verifying H\"{o}rmander condition, the existence and
uniqueness is proved in $L_{p}$-spaces of functions whose regularity is
defined by a scalable, possibly nonsymmetric, Levy measure. Some rough
probability density function estimates of the associated Levy process are
used as well.
\end{abstract}

\maketitle
\tableofcontents

\section{Introduction}

Let $\sigma \in \left( 0,2\right) $ and $\mathfrak{A}^{\sigma }$ be the
class of all nonnegative measures $\pi $\ on $\mathbf{R}_{0}^{d}=\mathbf{R}%
^{d}\backslash \left\{ 0\right\} $ such that $\int \left\vert y\right\vert
^{2}\wedge 1d\pi <\infty $ and 
\begin{equation*}
\sigma =\inf \left\{ \alpha <2:\int_{\left\vert y\right\vert \leq
1}\left\vert y\right\vert ^{\alpha }d\mathfrak{\pi }<\infty \right\} .
\end{equation*}%
In addition we assume that for $\pi \in \mathfrak{A}^{\sigma },$ 
\begin{eqnarray*}
\int_{\left\vert y\right\vert >1}\left\vert y\right\vert d\pi &<&\infty 
\text{ if }\sigma \in \left( 1,2\right) , \\
\int_{R<\left\vert y\right\vert \leq R^{\prime }}yd\pi &=&0\text{ if }\sigma
=1\text{ for all }0<R<R^{\prime }<\infty .\text{ }
\end{eqnarray*}

In this paper we consider the parabolic Cauchy problem with $\lambda \geq 0$%
\begin{eqnarray}
\partial _{t}u(t,x) &=&Lu(t,x)-\lambda u\left( t,x\right) +f(t,x)\text{ in }%
E=[0,T]\times \mathbf{R}^{d},  \label{1'} \\
u(0,x) &=&0,  \notag
\end{eqnarray}%
and the elliptic problem with $\lambda >0$,%
\begin{equation}
\lambda u\left( x\right) -Lu\left( x\right) =g\left( x\right) ,x\in \mathbf{R%
}^{d},  \label{2'}
\end{equation}%
with $\lambda \geq 0$ and $\lambda >0$ and integrodifferential operator 
\begin{equation*}
L\varphi \left( x\right) =L^{\pi }\varphi \left( x\right) =\int \left[
\varphi (x+y)-\varphi \left( x\right) -\chi _{\sigma }\left( y\right) \nabla
\varphi \left( x\right) y\right] \pi \left( dy\right) ,\varphi \in
C_{0}^{\infty }\left( \mathbf{R}^{d}\right) ,
\end{equation*}%
where $\pi \in \mathfrak{A}^{\sigma },$ $\chi _{\sigma }\left( y\right) =0$
if $\sigma \in \lbrack 0,1),\chi _{\sigma }\left( y\right) =1_{\left\{
\left\vert y\right\vert \leq 1\right\} }\left( y\right) $ if $\sigma =1$ and 
$\chi _{\sigma }\left( y\right) =1$ if $\sigma \in (1,2).$ The symbol of $L$
is 
\begin{equation*}
\mathfrak{\psi }\left( \xi \right) =\psi ^{\pi }\left( \xi \right) =\int %
\left[ e^{i\xi \cdot y}-1-i\chi _{\sigma }\left( y\right) \xi \cdot y\right]
\pi \left( dy\right) ,\xi \in \mathbf{R}^{d}.
\end{equation*}%
Note that $\pi \left( dy\right) =dy/\left\vert y\right\vert ^{d+\sigma }\in 
\mathfrak{A}^{\sigma }$ and, in this case, $L=L^{\pi }=c\left( \sigma
,d\right) \left( -\Delta \right) ^{\sigma /2}$, where $\left( -\Delta
\right) ^{\sigma /2}$\ is a fractional Laplacian. Let $\pi _{0}\in \mathfrak{%
A}^{\sigma }$ and 
\begin{equation}
c_{1}\left\vert \psi ^{\pi _{0}}\left( \xi \right) \right\vert \leq |\psi
^{\pi }\left( \xi \right) |\leq c_{2}\left\vert \psi ^{\pi _{0}}\left( \xi
\right) \right\vert ,\xi \in \mathbf{R}^{d},  \label{3'}
\end{equation}%
for some $0<c_{1}\leq c_{2}$. Given $\pi _{0}\in \mathfrak{A}^{\sigma },p\in
\lbrack 1,\infty ),$ we denote $H_{p}^{\pi _{0}}\left( \mathbf{R}^{d}\right) 
$ (resp. $\mathcal{H}_{p}^{\pi _{0}}\left( E\right) $) the closure in $%
L_{p}\left( \mathbf{R}^{d}\right) $ (resp. $L_{p}\left( E\right) $) of $%
C_{0}^{\infty }\left( \mathbf{R}^{d}\right) $ (resp. $C_{0}^{\infty }\left(
E\right) $) with respect to the norm 
\begin{equation*}
\left\vert f\right\vert _{\pi _{0},p}=\left\vert f\right\vert _{L_{p}\left( 
\mathbf{R}^{d}\right) }+\left\vert L^{\pi _{0}}f\right\vert _{L_{p}\left( 
\mathbf{R}^{d}\right) }\,,\text{ resp. }\left\vert g\right\vert _{\pi
_{0},p}=\left\vert g\right\vert _{L_{p}\left( E\right) }+\left\vert L^{\pi
_{0}}g\right\vert _{L_{p}\left( E\right) }.\,
\end{equation*}%
If $f_{n}\in C_{0}^{\infty }\left( \mathbf{R}^{d}\right) ,f_{n}\rightarrow f$
and $L^{\pi }f_{n}\rightarrow g$ in $L_{p}\left( \mathbf{R}^{d}\right) $ we
denote $g=L^{\pi }f$.

In this note, under certain "scalability" assumptions (see Assumption 
\textbf{D}$\left( \kappa ,l\right) $ below), we prove the existence and
uniqueness of (\ref{1'}) and (\ref{2'}) in $H_{p}^{\pi _{0}}\left( \mathbf{R}%
^{d}\right) $ (resp. $\mathcal{H}_{p}^{\pi _{0}}\left( E\right) $). Moreover
the following estimates hold:%
\begin{equation}
\left\vert u\right\vert _{\mathcal{H}_{p}^{\pi _{0}}}\leq C\left\vert
f\right\vert _{L_{p}\left( E\right) },\left\vert u\right\vert _{H_{p}^{\pi
_{0}}}\leq C\left\vert f\right\vert _{L_{p}\left( \mathbf{R}^{d}\right) }.
\label{4}
\end{equation}%
The symbol $\psi ^{\pi }\left( \xi \right) $ is not smooth in $\xi $ and the
standard Fourier multiplier results do not apply in this case. In order to
prove (\ref{4}), we associate to $L^{\pi }$ a family of balls and verify H%
\"{o}rmander condition (see Theorem \ref{t3} and (\ref{f23}) below) for it,
and apply Calderon-Zygmund theorem. As an example, we consider $\pi \in 
\mathfrak{A}^{\sigma }$ defined in radial and angular coordinates $%
r=\left\vert y\right\vert ,w=y/r,$ as 
\begin{equation}
\pi \left( \Gamma \right) =\int_{0}^{\infty }\int_{\left\vert w\right\vert
=1}\chi _{\Gamma }\left( rw\right) a\left( r,w\right) j\left( r\right)
r^{d-1}S\left( dw\right) dr,\Gamma \in \mathcal{B}\left( \mathbf{R}%
_{0}^{d}\right) ,  \label{5}
\end{equation}%
where $S\left( dw\right) $ is a finite measure on the unit sphere on $%
\mathbf{R}^{d}$. In \cite{xz}, the parabolic equation (\ref{1'}) was
considered, with $\pi $ in the form (\ref{5}) with $a=1,j\left( r\right)
=r^{-d-\sigma },$ and such that 
\begin{eqnarray*}
&&\int_{0}^{\infty }\int_{\left\vert w\right\vert =1}\chi _{\Gamma }\left(
rw\right) r^{-1-\sigma }\rho _{0}\left( w\right) S\left( dw\right) dr \\
&\leq &\pi \left( \Gamma \right) =\int_{0}^{\infty }\int_{\left\vert
w\right\vert =1}\chi _{\Gamma }\left( rw\right) r^{-1-\sigma }a\left(
r,w\right) S\left( dw\right) dr \\
&\leq &\int_{0}^{\infty }\int_{\left\vert w\right\vert =1}\chi _{\Gamma
}\left( rw\right) r^{-1-\sigma }S\left( dw\right) dr,\Gamma \in \mathcal{B}%
\left( \mathbf{R}_{0}^{d}\right) ,
\end{eqnarray*}%
and (\ref{3'}) holds with $\psi ^{\pi _{0}}\left( \xi \right) =\left\vert
\xi \right\vert ^{\sigma },\xi \in \mathbf{R}^{d}$. In this case, $\mathcal{H%
}_{p}^{\pi _{0}}\left( E\right) =H_{p}^{\sigma }\left( E\right) $ is the
fractional Sobolev space. The solution estimate (\ref{4}) for (\ref{1'}) was
derived in \cite{xz}, using $L^{\infty }$-$BMO$ type estimate. In \cite{kk},
the elliptic problem (\ref{2'}) was studied for $\pi $ in the form (\ref{5})
with $S\left( dw\right) =dw$ being a Lebesgue measure on the unit sphere in $%
\mathbf{R}^{d}$, with $0<c_{1}\leq a\leq c_{2}$, and a set of technical
assumptions on $j\left( r\right) $. The inequality (\ref{4}) for (\ref{2'})
was obtained using sharp function estimate based on the solution H\"{o}lder
norm estimate (following the idea in \cite{KimDong}, where (\ref{2'}) was
considered in $H_{p}^{\sigma }\left( \mathbf{R}^{d}\right) $ with $\pi $ as
in (\ref{5}) with $j\left( r\right) =r^{-d-\sigma }$ and $0<c_{1}\leq a\leq
c_{2}$.

The note is organized as follows. In Section 2, the main theorem is stated,
and an example of the form (\ref{5}) considered. In Section 3, the essential
technical results are presented. The main theorem is proved in Section 4.

\section{Notation and Main Results}

Denote $E=\left[ 0,T\right] \times \mathbf{R}^{d},\mathbf{N=}\left\{
0,1,2,\ldots \right\} ,\mathbf{R}_{0}^{d}=\mathbf{R}^{d}\backslash \left\{
0\right\} $. If $x,y\in \mathbf{R}^{d}$, we write%
\begin{equation*}
x\cdot y=\sum_{i=1}^{d}x_{i}y_{i},\left\vert x\right\vert =\left( x\cdot
x\right) ^{1/2}.
\end{equation*}

For a function $u\left( t,x\right) $ on $E$, we denote its partial
derivatives by $\partial _{t}u\left( t,x\right) =\partial u/\partial t$,$%
\partial _{i}u=\partial u/\partial x_{i}$, and $D^{\gamma }u=\partial
^{\left\vert \gamma \right\vert }u/\partial x_{1}^{\gamma _{1}}\ldots
\partial x_{d}^{\gamma _{d}}$, where multiindex $\gamma =\left( \gamma
_{1},\ldots ,\gamma _{d}\right) \in \mathbf{N}^{d},\nabla u=\left( \partial
_{1}u,\ldots ,\partial _{d}u\right) $ denotes the gradient of $u$ with
respect to $x$. For $k\in \mathbf{N}$, we denote $D^{k}u=\left( \partial
^{\gamma }u\right) _{\left\vert \gamma \right\vert =k}$.

Let $L_{p}(T)=L_{p}(E)$ is the space of $p-$integrable functions with norm, $%
p\geq 1$,

\begin{equation*}
\left\vert f\right\vert _{L_{p}(T)}=\left( \int_{0}^{T}\int \left\vert
f(t,x)\right\vert ^{p}dxdt\right) ^{1/p},\left\vert f\right\vert _{L_{\infty
}(T)}=\text{ess}\sup_{\left( t,x\right) \in E}\left\vert f(t,x)\right\vert .
\end{equation*}%
Similar space of functions on $\mathbf{R}^{d}$ is denoted $L_{p}\left( 
\mathbf{R}^{d}\right) .$

Let $\mathcal{S}\left( \mathbf{R}^{d}\right) $ be the Schwartz space of
smooth real valued rapidly decreasing functions. For $s\in \mathbf{N}$, we
define the Sobolev space $H_{p}^{n}\left( \mathbf{R}^{d}\right) $ (resp. $%
H_{p}^{n}\left( E\right) $) as closure of $C_{0}^{\infty }\left( \mathbf{R}%
^{d}\right) $ (resp. $C_{0}^{\infty }\left( E\right) $) with respect to the
norm 
\begin{equation*}
\left\vert f\right\vert _{n,p}=\sum_{\left\vert \beta \right\vert \leq
n}\left\vert D^{\beta }f\right\vert _{L_{p}\left( \mathbf{R}^{d}\right) },%
\text{ resp. }\left\vert g\right\vert _{n,p}=\sum_{\left\vert \beta
\right\vert \leq n}\left\vert D^{\beta }f\right\vert _{L_{p}\left( E\right)
}.
\end{equation*}%
For $\sigma \in \left( 0,2\right) $ and $v\in C_{0}^{\infty }\left( \mathbf{R%
}^{d}\right) $, we define the fractional Laplacian%
\begin{equation*}
\partial ^{\sigma }v\left( x\right) =\int \nabla _{y}^{\sigma }v\left(
x\right) \frac{dy}{\left\vert y\right\vert ^{d+\sigma }},x\in \mathbf{R}^{d},
\end{equation*}%
where%
\begin{equation*}
\nabla _{y}^{\sigma }v\left( x\right) =v\left( x+y\right) -v\left( x\right)
-\left( \nabla v\left( x\right) ,y\right) \chi _{\sigma }\left( y\right)
\end{equation*}%
with $\chi _{\sigma }\left( y\right) =1_{\left\{ \left\vert y\right\vert
\leq 1\right\} }1_{\sigma =1}+1_{\{\sigma \in \left( 1,2\right) \}}$ is the
integrand in the definition of $L^{\pi }$.

Given $\pi _{0}\in \mathfrak{A}^{\sigma },p\in \lbrack 1,\infty ),$ we
denote $H_{p}^{\pi _{0}}\left( \mathbf{R}^{d}\right) $ (resp. $\mathcal{H}%
_{p}^{\pi _{0}}\left( E\right) $) the closure in $L_{p}\left( \mathbf{R}%
^{d}\right) $ (resp. $L_{p}\left( E\right) $) of $C_{0}^{\infty }\left( 
\mathbf{R}^{d}\right) $ (resp. $C_{0}^{\infty }\left( E\right) $) with
respect to the norm 
\begin{equation*}
\left\vert f\right\vert _{\pi _{0},p}=\left\vert f\right\vert _{L_{p}\left( 
\mathbf{R}^{d}\right) }+\left\vert L^{\pi _{0}}f\right\vert _{L_{p}\left( 
\mathbf{R}^{d}\right) }\,,\text{ resp. }\left\vert g\right\vert _{\pi
_{0},p}=\left\vert g\right\vert _{L_{p}\left( E\right) }+\left\vert L^{\pi
_{0}}g\right\vert _{L_{p}\left( E\right) }.\,
\end{equation*}%
If $f_{n}\in C_{0}^{\infty }\left( \mathbf{R}^{d}\right) ,f_{n}\rightarrow f$
and $L^{\pi }f_{n}\rightarrow g$ in $L_{p}\left( \mathbf{R}^{d}\right) $ we
denote $g=L^{\pi }f$. Notice that $f_{n}\rightarrow 0$, $L^{\pi
}f_{n}\rightarrow h$ in $L_{p}\left( \mathbf{R}^{d}\right) $ implies that $%
h=0.$ Indeed, 
\begin{equation*}
\int \varphi L^{\pi }f_{n}=\int f_{n}L^{\pi ^{\ast }}\varphi \rightarrow
0,\varphi \in C_{0}^{\infty }\left( \mathbf{R}^{d}\right) ,
\end{equation*}%
where $\pi ^{\ast }\left( \Gamma \right) =\pi \left( -\Gamma \right) ,\Gamma
\in \mathcal{B}\left( \mathbf{R}_{0}^{d}\right) $, i.e. $\pi ^{\ast }\in 
\mathfrak{A}^{\sigma }$ as well. Note that if $\pi \in \mathfrak{A}^{\sigma }
$, then for any $f\in C_{0}^{\infty }\left( \mathbf{R}^{d}\right) ,$ 
\begin{equation*}
\left\vert L^{\pi }f\right\vert _{L_{p}\left( \mathbf{R}^{d}\right) }\leq
\left\vert \int_{\left\vert y\right\vert \leq 1}...\right\vert _{L_{p}\left( 
\mathbf{R}^{d}\right) }+\left\vert \int_{\left\vert y\right\vert
>1}...\right\vert _{L_{p}\left( \mathbf{R}^{d}\right) }\leq C\left\vert
f\right\vert _{2,p},
\end{equation*}%
that is $H_{p}^{2}\left( \mathbf{R}^{d}\right) \subseteq H_{p}^{\pi }\left( 
\mathbf{R}^{d}\right) $ and the embedding is continuous. The same holds for $%
H_{p}^{2}\left( E\right) \subseteq H_{p}^{\pi }\left( E\right) .$

We denote $\mathfrak{A=\cup }_{\sigma \in (0,2)}\mathfrak{A}^{\sigma }.$

We denote Fourier transform and its inverse%
\begin{eqnarray*}
\mathcal{F}v\left( \xi \right) &=&\hat{v}\left( \xi \right) =\int v\left(
x\right) e^{-i2\pi x\cdot \xi }dx,\xi \in \mathbf{R}^{d}, \\
\mathcal{F}^{-1}v\left( x\right) &=&\int v\left( \xi \right) e^{i2\pi x\cdot
\xi }d\xi ,x\in \mathbf{R}^{d},v\in \mathcal{S}\left( \mathbf{R}^{d}\right) .
\end{eqnarray*}

We denote $C_{b}^{\infty }\left( E\right) $ the space of bounded infinitely
differentiable in $x$ functions whose derivatives are bounded.

$C=C\left( \cdot ,\ldots ,\cdot \right) $ denotes constants depending only
on quantities appearing in parentheses. In a given context the same letter \
is (generally) used to denote different constants depending on the same set
of arguments.

We also introduce an auxiliary Levy measure $\mu _{0}$ on $\mathbf{R}%
_{0}^{d} $ such that the following assumption holds.

\textbf{Assumption A}$_{0}$. \emph{Let} $\mu ^{0}\in \mathfrak{A,}\chi
_{\left\{ \left\vert y\right\vert \leq 1\right\} }\mu ^{0}\left( dy\right)
=\mu ^{0}\left( dy\right) $, \emph{and}%
\begin{eqnarray*}
\int \left\vert y\right\vert \mu ^{0}\left( dy\right) +\int \left\vert \xi
\right\vert ^{2}[1+\zeta \left( \xi \right) ]^{d+3}\exp \left\{ -\phi
_{0}\left( \xi \right) \right\} d\xi &\leq &N_{0}\text{ if }\sigma \in
\left( 0,1\right) , \\
\int \left\vert y\right\vert ^{2}\mu ^{0}\left( dy\right) +\int \left\vert
\xi \right\vert ^{4}[1+\zeta \left( \xi \right) ]^{d+3}\exp \left\{ -\phi
_{0}\left( \xi \right) \right\} d\xi &\leq &N_{0}\text{ if }\sigma \in
\lbrack 1,2),
\end{eqnarray*}

\emph{where} 
\begin{eqnarray*}
\phi _{0}\left( \xi \right) &=&\int_{\left\vert y\right\vert \leq 1}\left[
1-\cos \left( 2\pi \xi \cdot y\right) \right] \mu ^{0}\left( dy\right) , \\
\zeta \left( \xi \right) &=&\int_{\left\vert y\right\vert \leq 1}\chi
_{\sigma }\left( y\right) \left\vert y\right\vert [\left( \left\vert \xi
\right\vert \left\vert y\right\vert \right) \wedge 1]\mu ^{0}\left(
dy\right) ,\xi \in \mathbf{R}^{d}.
\end{eqnarray*}%
\emph{In addition, we assume that for any} $\xi \in S_{d-1}=\left\{ \xi \in 
\mathbf{R}^{d}:\left\vert \xi \right\vert =1\right\} ,$ 
\begin{equation*}
\int_{\left\vert y\right\vert \leq 1}\left\vert \xi \cdot y\right\vert
^{2}\mu ^{0}\left( dy\right) \geq c_{1}>0.
\end{equation*}

For $\pi \in \mathfrak{A}=\cup _{\sigma \in \left( 0,2\right) }\mathfrak{A}%
^{\sigma }$ and $R>0$, we denote%
\begin{equation*}
\pi _{R}\left( \Gamma \right) =\int \chi _{\Gamma }\left( y/R\right) \pi
\left( dy\right) ,\Gamma \in \mathcal{B}\left( \mathbf{R}_{0}^{d}\right) .
\end{equation*}

\begin{definition}
We say that a continuous function $\kappa :(0,\infty )\rightarrow (0,\infty
) $ is a scaling function if $\lim_{R\rightarrow 0}\kappa \left( R\right)
=0,\lim_{R\rightarrow \infty }\kappa \left( R\right) =\infty $ and there is
a nondecreasing continuous function $l\left( \varepsilon \right)
,\varepsilon >0,$ such that $\lim_{\varepsilon \rightarrow 0}l\left(
\varepsilon \right) =0$ and 
\begin{equation*}
\kappa \left( \varepsilon r\right) \leq l\left( \varepsilon \right) \kappa
(r),r>0,\varepsilon >0.
\end{equation*}%
We call $l\left( \varepsilon \right) ,\varepsilon >0,$ a scaling factor of $%
\kappa $.
\end{definition}

For a scaling function $\kappa $ with a scaling factor $l$ and $\pi \in 
\mathfrak{A}^{\sigma }$ we introduce the following

\textbf{Assumption D}$\left( \kappa ,l\right) $\textbf{. }\emph{(i) For every%
} $R>0,$ 
\begin{equation*}
\tilde{\pi}_{R}\left( dy\right) =\kappa \left( R\right) \pi _{R}\left(
dy\right) \geq 1_{\left\{ \left\vert y\right\vert \leq 1\right\} }\mu
^{0}\left( dy\right) ,
\end{equation*}%
\emph{with }$\mu ^{0}=\mu ^{0;\pi }$\emph{\ satisfying Assumption }\textbf{A}%
$_{0}$. \emph{If }$\sigma =1$ \emph{we, in addition assume that }$%
\int_{R<\left\vert y\right\vert \leq R^{\prime }}y\mu ^{0}\left( dy\right)
=0 $ \emph{for any} $0<R<R^{\prime }\leq 1.$ \emph{\ Here }$\tilde{\pi}%
_{R}\left( dy\right) =\kappa \left( R\right) \pi _{R}\left( dy\right) .$

\emph{(ii) There exist }$\alpha _{1}$\emph{\ and }$\alpha _{2}$\emph{\ and a
constant }$N>0$ \emph{such that}%
\begin{equation*}
\int_{\left\vert z\right\vert \leq 1}\left\vert z\right\vert ^{\alpha _{1}}%
\tilde{\pi}_{R}(dz)+\int_{\left\vert z\right\vert >1}\left\vert z\right\vert
^{\alpha _{2}}\tilde{\pi}_{R}(dz)\leq N\text{ }\forall R>0,
\end{equation*}%
\emph{where }$\alpha _{1},\alpha _{2}\in (0,1]\text{ \emph{if} }\sigma \in
(0,1)\text{; }\alpha _{1},\alpha _{2}\in (1,2]\text{ if }\sigma \in (1,2)$%
\emph{; }$\alpha _{1}\in (1,2]$\emph{\ and }$\alpha _{2}\in \lbrack 0,1)$%
\emph{\ for }$\sigma =1$\emph{.}

(iii) \emph{Let }$\gamma \left( t\right) =\inf \left( s>0:l\left( s\right)
>t\right) ,t>0$. With $I_{1}=\left\{ t>0:\gamma \left( t\right) \leq
1\right\} ,I_{2}=\left\{ t>0:\gamma \left( t\right) >1\right\} $ we have 
\begin{equation*}
\int_{I_{1}}[t\gamma (t)^{-\alpha _{1}}+1_{\sigma \in \left( 1,2\right)
}\gamma \left( t\right) ^{-1}]dt\leq N_{1}<\infty ,
\end{equation*}%
and%
\begin{equation*}
\int_{I_{2}}[\gamma \left( t\right) ^{-(1+\alpha _{2})}+\gamma \left(
t\right) ^{-2\alpha _{2}}]dt\leq N_{1}<\infty .
\end{equation*}

The main result of this paper for (\ref{2'}) is

\begin{theorem}
\label{thm:main}Let \thinspace $p>1,\pi _{0},\pi \in \mathfrak{A}^{\sigma
},\lambda >0$. Assume there is a scaling function $\kappa $ such that 
\textbf{D}$\left( \kappa ,l\right) $\textbf{\ }hold for both, $\pi $ and $%
\pi _{0}$.

Then for each $f\in L_{p}(\mathbf{R}^{d})$ there is a unique $u\in
H_{p}^{\pi _{0}}\left( \mathbf{R}^{d}\right) $ solving (\ref{1'}). Moreover,
there is $C=C\left( d,p,\kappa ,l,N_{0},N,N_{1},c_{1}\right) $ such that 
\begin{eqnarray*}
\left\vert L^{\pi _{0}}u\right\vert _{L_{p}\left( \mathbf{R}^{d}\right) }
&\leq &C\left\vert f\right\vert _{L_{p}\left( \mathbf{R}^{d}\right) }, \\
\left\vert u\right\vert _{L_{p}\left( \mathbf{R}^{d}\right) } &\leq &\frac{1%
}{\lambda }\left\vert f\right\vert _{L_{p}\left( \mathbf{R}^{d}\right) }.
\end{eqnarray*}
\end{theorem}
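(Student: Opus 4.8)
The plan is to prove the a priori estimate $|L^{\pi_0}u|_{L_p}\le C|f|_{L_p}$ for smooth compactly supported $f$ via a Fourier-multiplier argument combined with the Calder\'on--Zygmund theorem, and then pass to the limit to obtain existence, uniqueness, and the bound in the Banach space $H_p^{\pi_0}(\mathbf R^d)$; the $L_p$-bound on $u$ itself follows at once from the maximum principle for the resolvent of a L\'evy generator, since $(\lambda-L^\pi)^{-1}$ is given by integration against the sub-probability resolvent kernel $\int_0^\infty e^{-\lambda t}p_t^\pi(\cdot)\,dt$, whence $|u|_{L_p}\le\lambda^{-1}|f|_{L_p}$ by Young's inequality (the density $p_t^\pi$ being nonnegative with mass $\le 1$). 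The serious content is the first estimate. Writing formally $u=(\lambda-L^\pi)^{-1}f$ and $v=L^{\pi_0}u$, one has $\widehat v(\xi)=m(\xi)\widehat f(\xi)$ with the multiplier
\begin{equation*}
m(\xi)=\frac{\psi^{\pi_0}(\xi)}{\lambda-\psi^{\pi}(\xi)}\,,
\end{equation*}
and by the two-sided comparison (\ref{3'}), which under Assumption \textbf{D}$(\kappa,l)$ holds for $\pi$ relative to $\pi_0$ (both being controlled by the same scaling profile, so $|\psi^{\pi}|\asymp|\psi^{\pi_0}|\asymp$ the radial profile $1/\kappa(1/|\xi|)$ up to constants), one gets $|m(\xi)|\le C$; boundedness on $L_2$ is then immediate by Plancherel.

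The main obstacle — and the reason the standard Mikhlin/H\"ormander multiplier theorem does not apply — is that $\psi^\pi$ is merely the symbol of a L\'evy operator and need not be smooth in $\xi$; so I cannot differentiate $m$. Instead I would use the probabilistic route advertised in the introduction: one represents the solution operator through the transition density $p_t^\pi$ of the L\'evy process generated by $L^\pi$, so that $u(x)=\int_0^\infty e^{-\lambda t}\big(p_t^\pi * f\big)(x)\,dt$ and hence $L^{\pi_0}u(x)=\int_0^\infty e^{-\lambda t}\big(L^{\pi_0}p_t^\pi * f\big)(x)\,dt$. The candidate Calder\'on--Zygmund kernel is therefore $K(x)=\int_0^\infty e^{-\lambda t}\,L^{\pi_0}p_t^\pi(x)\,dt$, and one must verify (a) the $L_2$-boundedness of $f\mapsto K*f$, which is the Plancherel computation above once one knows $\widehat{L^{\pi_0}p_t^\pi}(\xi)=\psi^{\pi_0}(\xi)e^{t\psi^\pi(\xi)}$ and that the $\xi$-integral converges by Assumption \textbf{A}$_0$, and (b) the H\"ormander cancellation condition $\int_{|x|\ge 2|h|}|K(x-h)-K(x)|\,dx\le C$ uniformly in $h$. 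Step (b) is where the scalability assumptions and the "rough density estimates" enter: using the balls associated to $L^\pi$ (Theorem \ref{t3} and the forthcoming estimate (\ref{f23})), together with gradient and $L^{\pi_0}$-estimates on $p_t^\pi$ that follow from Assumption \textbf{A}$_0$ and \textbf{D}$(\kappa,l)$(ii)--(iii), one splits the time integral at the scale $t\sim$ (size determined by $|h|$ through $\gamma$), bounds the near part by the mass/$L^1$ estimates and the far part by the mean-value inequality $|K(x-h)-K(x)|\le|h|\sup|\nabla K|$ along the segment, and the integrals $N_1<\infty$ in \textbf{D}(iii) are exactly what makes both pieces finite and $h$-independent.

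Granting the Calder\'on--Zygmund bound, I would finish as follows. First, for $f\in C_0^\infty(\mathbf R^d)$ define $u$ by the resolvent formula; standard L\'evy-semigroup theory gives that $u$ is smooth with all the relevant quantities in $L_p$, that $u$ solves $\lambda u-L^\pi u=f$ pointwise, and that the two displayed estimates hold with the constant $C$ depending only on the listed data. Next, for general $f\in L_p(\mathbf R^d)$ pick $f_n\in C_0^\infty$ with $f_n\to f$ in $L_p$; the estimates show $u_n$ is Cauchy in $L_p$ and $L^{\pi_0}u_n$ is Cauchy in $L_p$, hence $u_n\to u$ in $H_p^{\pi_0}(\mathbf R^d)$ by the very definition of that space, and the equation (\ref{1'}) (in its stationary form (\ref{2'})) passes to the limit in the distributional sense — here one uses the already-noted fact that $f_n\to 0,\ L^\pi f_n\to h$ in $L_p$ forces $h=0$, applied to $u_n$, to identify $L^\pi u$ and to see that $\lambda u-L^\pi u=f$. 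Finally, uniqueness: if $\lambda u-L^\pi u=0$ with $u\in H_p^{\pi_0}$, approximate $u$ by $u_m\in C_0^\infty$ in the $|\cdot|_{\pi_0,p}$-norm, test against the dual resolvent $(\lambda-L^{\pi^*})^{-1}\varphi$ for $\varphi\in C_0^\infty$, and pass to the limit to get $\int u\varphi=0$ for all such $\varphi$, whence $u=0$; the constant in the $u$-estimate being $1/\lambda$ is consistent with this and with the sub-probabilistic nature of the kernel.
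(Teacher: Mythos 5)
Your proposal follows essentially the same route as the paper: represent $u$ via the resolvent $\int_0^\infty e^{-\lambda t}p_t^\pi*f\,dt$, write $L^{\pi_0}u$ as convolution with the kernel $\int_0^\infty e^{-\lambda t}L^{\pi_0}p_t^\pi\,dt$, get $L_2$-boundedness by Plancherel using the comparability of $|\psi^\pi|$ and $|\psi^{\pi_0}|$, verify the H\"ormander condition for the $\kappa$-adapted balls via the rescaled density estimates and the integrability conditions in \textbf{D}$(\kappa,l)$(iii), and then pass to the limit and use duality for uniqueness. This is exactly the structure of Lemmas \ref{led}, \ref{ld1}, Lemma \ref{pro1}, Corollary \ref{coro2} and Proposition \ref{pro2} in the paper, so your blueprint is correct and matches the authors' argument.
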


The main result for (\ref{1'}) is

\begin{theorem}
\label{t1}Let \thinspace $p\in \left( 1,\infty \right) ,\pi _{0},\pi \in 
\mathfrak{A}^{\sigma }$. Assume there is a scaling function $\kappa $ such
that \textbf{D}$\left( \kappa ,l\right) $\textbf{\ }hold for both, $\pi $
and $\pi _{0}$.

Then for each $f\in L_{p}(E)$ there is a unique $u\in \mathcal{H}_{p}^{\pi
_{0}}\left( E\right) $ solving (\ref{1'}). Moreover, there is $C=C\left(
d,p,\kappa ,l,N_{0},N,N_{1},c_{1}\right) $ such that%
\begin{eqnarray*}
\left\vert L^{\pi _{0}}u\right\vert _{L_{p}\left( E\right) } &\leq
&C\left\vert f\right\vert _{L_{p}\left( E\right) }, \\
\left\vert u\right\vert _{L_{p}\left( E\right) } &\leq &\left( \frac{1}{%
\lambda }\wedge T\right) \left\vert f\right\vert _{L_{p}\left( E\right) }.
\end{eqnarray*}
\end{theorem}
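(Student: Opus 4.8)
The plan is to reduce the parabolic problem (\ref{1'}) to the elliptic estimate of Theorem~\ref{thm:main} by a standard duality-free Fourier/semigroup argument, but in practice the cleanest route is to work directly with the representation of the solution as a space-time convolution against the transition density of the L\'evy process generated by $L^{\pi }$. First I would establish the a priori estimate: assuming $u\in \mathcal{H}_{p}^{\pi _{0}}(E)$ is a (sufficiently smooth, compactly supported in time) solution of $\partial _{t}u=L^{\pi }u-\lambda u+f$ with $u(0,\cdot)=0$, write $u(t,\cdot)=\int_{0}^{t}e^{-\lambda (t-s)}P_{t-s}f(s,\cdot)\,ds$, where $P_{t}$ is the convolution semigroup with symbol $e^{t\psi ^{\pi }(\xi )}$. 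Then $L^{\pi _{0}}u(t,\cdot)=\int_{0}^{t}e^{-\lambda (t-s)}\,\mathcal{F}^{-1}\bigl[\psi ^{\pi _{0}}(\xi )e^{(t-s)\psi ^{\pi }(\xi )}\widehat{f}(s,\xi )\bigr]\,ds$, so the operator $f\mapsto L^{\pi _{0}}u$ is a space-time Fourier multiplier (equivalently, a singular integral in $(t,x)$) with kernel built from the densities. The key point, exactly as advertised in the introduction, is that $\psi ^{\pi }$ is merely continuous, so the job is to verify an $L^{1}$-type H\"{o}rmander condition for this kernel in the metric measure space associated to the family of balls attached to $L^{\pi }$ (the balls of radius $\kappa$-comparable size, cf.\ Theorem~\ref{t3} and (\ref{f23})), and then invoke the Calder\'on--Zygmund theorem for spaces of homogeneous type to get $L_{p}(E)$-boundedness for $1<p<\infty$.

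Concretely, the steps in order: (1) Record the rough pointwise and integral bounds on the transition density $p_{t}(x)$ of the process with generator $L^{\pi }$ and its first/second spatial increments — these are the ``rough probability density function estimates'' the abstract mentions, and under Assumption \textbf{D}$(\kappa,l)$ they come with the correct scaling in terms of $\kappa$ and the scaling factor $l$ (this is where Section~3's technical results, in particular (\ref{f23}), are used). (2) Using those bounds together with the comparability (\ref{3'})-type control supplied by \textbf{D}$(\kappa,l)$ for both $\pi$ and $\pi _{0}$, show that the kernel $K(t,s,x,y)=e^{-\lambda (t-s)}1_{\{s<t\}}\,L^{\pi _{0}}_{x}p_{t-s}(x-y)$ satisfies the cancellation/H\"{o}rmander inequality $\int_{|(s,y)-(s',y')|_{*}>c\,|(s,z)-(s',y')|_{*}}|K(t,s,y,z)-K(t',s',y',z)|\,d(t,y)\le C$ uniformly, where $|\cdot|_{*}$ is the parabolic quasi-metric adapted to $\kappa$. (3) Verify the ``weak $(1,1)$ off-diagonal'' / Hörmander bound is paired with an easy $L_{2}(E)\to L_{2}(E)$ bound for $f\mapsto L^{\pi _{0}}u$: by Plancherel this amounts to $\sup _{\xi}\int_{0}^{\infty}|\psi ^{\pi _{0}}(\xi)|\,e^{-\lambda r}\,|e^{r\psi ^{\pi }(\xi)}|\,dr<\infty$, which follows from $\mathrm{Re}\,\psi ^{\pi }(\xi)\le -c|\psi ^{\pi _{0}}(\xi)|$ (a consequence of Assumption \textbf{A}$_{0}$'s nondegeneracy and \textbf{D}$(\kappa,l)$). (4) Apply Calder\'on--Zygmund to upgrade to all $1<p<\infty$, obtaining $|L^{\pi _{0}}u|_{L_{p}(E)}\le C|f|_{L_{p}(E)}$ with $C$ depending only on the listed constants; the bound $|u|_{L_{p}(E)}\le (\lambda^{-1}\wedge T)|f|_{L_{p}(E)}$ is immediate from $|u(t,\cdot)|_{L_{p}}\le \int_{0}^{t}e^{-\lambda(t-s)}|f(s,\cdot)|_{L_{p}}ds$ together with Minkowski and $\int_{0}^{t}e^{-\lambda(t-s)}ds\le \lambda^{-1}\wedge T$.

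For existence and uniqueness: uniqueness follows from the a priori estimate applied to the difference of two solutions with $f=0$ (using that $u_{n}\to 0$, $L^{\pi }u_{n}\to h$ in $L_{p}$ forces $h=0$, as noted in the excerpt). For existence, first solve (\ref{1'}) for $f\in C_{0}^{\infty}(E)$ via the convolution formula above — here one checks $u\in \mathcal{H}_{p}^{\pi _{0}}(E)$ using the density estimates and that $u$ genuinely satisfies the equation — and then pass to general $f\in L_{p}(E)$ by density, the a priori estimate guaranteeing the limit lies in $\mathcal{H}_{p}^{\pi _{0}}(E)$ and solves the equation. The main obstacle is step (2): proving the H\"{o}rmander condition for the multiplier kernel when $\psi^{\pi}$ is non-smooth and possibly nonsymmetric. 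This is precisely where the scalability Assumption \textbf{D}$(\kappa,l)$ does the work — it converts the lack of smoothness of the symbol into a workable self-improving scaling of the density estimates across all dyadic-in-$\kappa$ scales, so that the increment estimates telescope and the off-diagonal integral converges; controlling the $\sigma=1$ case (with its symmetry condition on $\mu^{0}$ to kill the drift terms) and keeping all constants dependent only on $d,p,\kappa,l,N_{0},N,N_{1},c_{1}$ will require the most care.
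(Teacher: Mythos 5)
Your proposal takes essentially the same route as the paper: represent $u(t,\cdot)=\int_{0}^{t}e^{-\lambda(t-s)}P_{t-s}^{\pi}f(s,\cdot)\,ds$, write $f\mapsto L^{\pi_{0}}u$ as a space-time singular integral with kernel built from $L^{\pi_{0}}p^{\pi^{\ast}}(t,\cdot)$, get the $L_{2}$ bound by Plancherel using $|\psi^{\pi_{0}}|\lesssim|\operatorname{Re}\psi^{\pi}|$ (Lemma \ref{le5} / Corollary \ref{c1}), verify the H\"{o}rmander condition for the $\kappa$-adapted parabolic quasi-balls via the density estimates of Lemma \ref{led} and Lemma \ref{ld1}, and apply the Calder\'on--Zygmund theorem (Theorem \ref{t3}); the easy $|u|_{L_{p}}\leq(\lambda^{-1}\wedge T)|f|_{L_{p}}$ is Lemma \ref{le0}. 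So steps (1)--(4) match the paper in both strategy and in the ordering of the reductions.

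Two points need repair. First, a minor one: the paper's Theorem \ref{t3} gives $L_{p}$-boundedness only for $1<p<q$ with $q=2$; the range $p>2$ is obtained by a duality argument with the ``flipped'' kernel $\tilde K^{\varepsilon}(t,x)=K^{\varepsilon}(t,-x)=L^{\pi_{0}^{\ast}}p^{\pi}\chi_{[\varepsilon,\infty)}$ (step~3 of Claim \ref{cl1}). Saying Calder\'on--Zygmund directly ``upgrades to all $1<p<\infty$'' skips this; you must check the adjoint kernel also satisfies the H\"{o}rmander bound, which the paper does by observing the hypotheses on $\pi$, $\pi_{0}$ are symmetric under $\pi\mapsto\pi^{\ast}$.

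Second, and this is the genuine gap: your uniqueness argument does not work as stated. Applying ``the a priori estimate to the difference of two solutions with $f=0$'' presupposes that the estimate $|L^{\pi_{0}}u|_{L_{p}}\leq C|f|_{L_{p}}$ holds for an arbitrary $u\in\mathcal{H}_{p}^{\pi_{0}}(E)$ solving (\ref{1'}), but the estimate is only established for $u$ that arise as limits of smooth solutions, and producing such an approximating sequence for a given weak solution is exactly what needs to be shown. The parenthetical appeal to ``$u_{n}\to0$, $L^{\pi}u_{n}\to h$ forces $h=0$'' addresses only the well-definedness of $L^{\pi}$ on the closure space, not uniqueness of solutions to the PDE; it is an unrelated fact. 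The paper closes this by a duality argument: for $\varphi\in\tilde C^{\infty}(E)$, solve the time-reversed adjoint problem $\partial_{t}v+L^{\pi^{\ast}}v-\lambda v+\varphi=0$, $v(T)=0$ (via Lemma \ref{le0} applied to $\tilde\varphi(t,x)=\varphi(T-t,x)$ and $\pi^{\ast}$), and integrate by parts against $u$ to conclude $\int_{E}u\varphi=0$ for all test functions $\varphi$, hence $u=0$ a.e. You would need to supply this (or an equivalent) step; the a priori estimate alone is not enough.
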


\begin{remark}
Assumption \textbf{D}$\left( \kappa ,l\right) $ holds for both, $\pi ,\pi
_{0}$, means that $\kappa ,l,$ and the parameters $\alpha _{1},\alpha
_{2},N,N_{1},N_{0},c_{1}$ are the same.
\end{remark}

\subsection{Example}

Let $\mu \left( dt\right) $ be a measure on $\left( 0,\infty \right) $ such
that $\int_{0}^{\infty }\left( 1\wedge t\right) \mu \left( dt\right) <\infty 
$, and let 
\begin{equation*}
\phi \left( r\right) =\int_{0}^{\infty }\left( 1-e^{-rt}\right) \mu \left(
dt\right) ,r\geq 0,
\end{equation*}%
be Bernstein function (see \cite{vo}, \cite{kk}). Let%
\begin{equation*}
j\left( r\right) =\int_{0}^{\infty }\left( 4\pi t\right) ^{-\frac{d}{2}}\exp
\left( -\frac{r^{2}}{4t}\right) \mu \left( dt\right) ,r>0.
\end{equation*}%
We consider $\pi \in \mathfrak{A}=\cup _{\sigma \in \left( 0,2\right) }%
\mathfrak{A}^{\sigma }\,\ $defined in radial and angular coordinates $%
r=\left\vert y\right\vert ,w=y/r,$ as%
\begin{equation}
\pi \left( \Gamma \right) =\int_{0}^{\infty }\int_{\left\vert w\right\vert
=1}\chi _{\Gamma }\left( rw\right) a\left( r,w\right) j\left( r\right)
r^{d-1}S\left( dw\right) dr,\Gamma \in \mathcal{B}\left( \mathbf{R}%
_{0}^{d}\right) ,  \label{fe1}
\end{equation}%
where $S\left( dw\right) $ is a finite measure on the unite sphere on $%
\mathbf{R}^{d}$. If $S\left( dw\right) =dw$ is the Lebesgue measure on the
unit sphere,then%
\begin{equation*}
\pi \left( \Gamma \right) =\pi ^{J,a}\left( \Gamma \right) =\int_{\mathbf{R}%
^{d}}\chi _{\Gamma }\left( y\right) a\left( \left\vert y\right\vert
,y/\left\vert y\right\vert \right) J\left( y\right) dy,\Gamma \in \mathcal{B}%
\left( \mathbf{R}_{0}^{d}\right) ,
\end{equation*}%
where $J\left( y\right) =j\left( \left\vert y\right\vert \right) ,y\in 
\mathbf{R}^{d}.$ Let $\pi _{0}=\pi ^{J,1},$ i.e.,%
\begin{equation}
\pi _{0}\left( \Gamma \right) =\int_{\mathbf{R}^{d}}\chi _{\Gamma }\left(
y\right) J\left( y\right) dy,\Gamma \in \mathcal{B}\left( \mathbf{R}%
_{0}^{d}\right) .  \label{fe2}
\end{equation}

We assume

\textbf{H. }(i) There is $N>0$ so that%
\begin{equation*}
N^{-1}\phi \left( r^{-2}\right) r^{-d}\leq j\left( r\right) \leq N\phi
\left( r^{-2}\right) r^{-d},r>0.
\end{equation*}

(ii) There are $0<\delta _{1}\leq \delta _{2}<1$ and $N>0$ so that for $%
0<r\leq R$%
\begin{equation*}
N^{-1}\left( \frac{R}{r}\right) ^{\delta _{1}}\leq \frac{\phi \left(
R\right) }{\phi \left( r\right) }\leq N\left( \frac{R}{r}\right) ^{\delta
_{2}}.
\end{equation*}

\textbf{G. }There is $\rho _{0}\left( w\right) ,\left\vert w\right\vert =1,$
such that $\rho _{0}\left( w\right) \leq a\left( r,w\right) \leq
1,r>0,\left\vert w\right\vert =1,$ and for every $\left\vert \xi \right\vert
=1,$ 
\begin{equation*}
\int_{\left\vert w\right\vert =1}\left\vert \xi \cdot w\right\vert ^{2}\rho
_{0}\left( w\right) S\left( dw\right) \neq 0.
\end{equation*}

For example, in \cite{vo} and \cite{kk} among others the following specific
Bernstein functions satisfying \textbf{H} are listed:

(0) $\phi \left( r\right) =\sum_{i=1}^{n}r^{\alpha _{i}},\alpha _{i}\in
\left( 0,1\right) ,i=1,\ldots ,n;$

(1) $\phi \left( r\right) =\left( r+r^{\alpha }\right) ^{\beta },\alpha
,\beta \in \left( 0,1\right) ;$

(2) $\phi \left( r\right) =r^{\alpha }\left( \ln \left( 1+r\right) \right)
^{\beta },\alpha \in \left( 0,1\right) ,\beta \in \left( 0,1-\alpha \right)
; $

(3) $\phi \left( r\right) =\left[ \ln \left( \cosh \sqrt{r}\right) \right]
^{\alpha },\alpha \in \left( 0,1\right) .$

The following statement holds.

\begin{remark}
\label{exre}Let $\pi ,\pi _{0}$ be given by (\ref{fe1}) and (\ref{fe2})$.$
Assume \textbf{H} and \textbf{G} hold.

a) If $2\delta _{1}>1$, then Theorems \ref{t1} (resp. \ref{thm:main}) hold
in $H_{p}^{\pi _{0}}\left( E\right) $ (resp. \thinspace $H_{p}^{\pi
_{0}}\left( \mathbf{R}^{d}\right) \,$).

b) If $2\delta _{2}<1$ and $2\delta _{1}>\delta _{2}$, then Theorems \ref{t1}
(resp. \ref{thm:main}) hold in $H_{p}^{\pi _{0}}\left( E\right) $ (resp.
\thinspace $H_{p}^{\pi _{0}}\left( \mathbf{R}^{d}\right) \,$).
\end{remark}

\begin{proof}
We verify that the assumptions of Theorems \ref{t1} and \ref{thm:main} hold.
Indeed, \textbf{H} implies that there are $0<c\leq C$ so that 
\begin{eqnarray*}
cr^{-d-2\delta _{1}} &\leq &j\left( r\right) \leq Cr^{-d-2\delta _{2}},r\leq
1, \\
cr^{-d-2\delta _{2}} &\leq &j\left( r\right) \leq Cr^{-d-2\delta _{1}},r>1.
\end{eqnarray*}%
Hence $2\delta _{1}\leq \sigma \leq 2\delta _{2}.$ In this case $\kappa
\left( R\right) =j\left( R\right) ^{-1}R^{-d},R>0,$ is a scaling function: $%
\kappa \left( \varepsilon R\right) \leq l\left( \varepsilon \right) \kappa
\left( R\right) ,\varepsilon ,R>0,$ with%
\begin{equation*}
l\left( \varepsilon \right) =\left\{ 
\begin{array}{cc}
C_{1}\varepsilon ^{2\delta _{1}} & \text{if }\varepsilon \leq 1, \\ 
C_{1}\varepsilon ^{2\delta _{2}} & \text{if }\varepsilon >1%
\end{array}%
\right.
\end{equation*}%
for some $C_{1}>0$. Hence 
\begin{equation*}
\gamma \left( t\right) =l^{-1}\left( t\right) =\left\{ 
\begin{array}{c}
C_{1}^{-1/2\delta _{1}}t^{1/2\delta _{1}}\text{ if }t\leq C_{1}, \\ 
C_{1}^{-1/2\delta _{2}}t^{1/2\delta _{2}}\text{ if }t>C_{1}.%
\end{array}%
\right.
\end{equation*}%
We see easily that $\alpha _{1}$ is any number $>2\delta _{2}$ and $\alpha
_{2}$ is any number $<2\delta _{1}.$ The measure $\mu ^{0}$ for $\pi $ is%
\begin{equation*}
\mu ^{0}\left( dy\right) =\mu ^{0,\pi }\left( dy\right) =c_{1}\int \chi
_{dy}\left( rw\right) \chi _{\left\{ r\leq 1\right\} }r^{-1-2\delta
_{1}}\rho _{0}\left( w\right) S\left( dw\right) dr;
\end{equation*}%
and $\mu ^{0}$ for $\pi _{0}$ is%
\begin{equation*}
\mu ^{0}\left( dy\right) =\mu ^{0,\pi _{0}}\left( dy\right) =c_{1}^{\prime
}\int \chi _{dy}\left( rw\right) \chi _{\left\{ r\leq 1\right\}
}r^{-1-2\delta _{1}}dwdr.
\end{equation*}%
Integrability conditions \textbf{D}$\left( \kappa ,l\right) $(iii) easily
follow from a) or b).
\end{proof}

\section{Auxiliary results}

In this section we present some auxiliary results.

\subsection{Some \thinspace $L_{p}$ estimates}

We start with the following observation.

\begin{remark}
\label{re2 copy(1)}If $\pi \in \mathfrak{A}^{\sigma }$, then for any $f\in
C_{0}^{\infty }\left( \mathbf{R}^{d}\right) ,$ 
\begin{equation*}
\left\vert L^{\pi }f\right\vert _{L_{p}\left( \mathbf{R}^{d}\right) }\leq
\left\vert \int_{\left\vert y\right\vert \leq 1}...\right\vert _{L_{p}\left( 
\mathbf{R}^{d}\right) }+\left\vert \int_{\left\vert y\right\vert
>1}...\right\vert _{L_{p}\left( \mathbf{R}^{d}\right) }\leq C\left\vert
f\right\vert _{2,p}.
\end{equation*}%
Hence $H_{p}^{2}\left( \mathbf{R}^{d}\right) \subseteq H_{p}^{\pi }\left( 
\mathbf{R}^{d}\right) $ and the embedding is continuous. The same holds for $%
H_{p}^{2}\left( E\right) \subseteq H_{p}^{\pi }\left( E\right) .$
\end{remark}

We will use the following equality for Sobolev norm estimates.

\begin{lemma}
\label{lr1}$($Lemma 2.1 in \cite{ko}$)$ For $\alpha \in (0,1)$ and $u\in 
\mathcal{S}(\mathbf{R}^{d})$, 
\begin{equation}
u\left( x+y\right) -u(x)=C\int k^{(\alpha )}(y,z)\partial ^{\alpha }u(x-z)dz,
\label{ff0}
\end{equation}%
where the constant $C=C(\alpha ,d)$ and 
\begin{equation*}
k^{(\alpha )}(z,y)=|z+y|^{-d+\alpha }-|z|^{-d+\alpha }.
\end{equation*}%
Moreover, there is a constant $C=C(\alpha ,d)$ such that for each $y\in 
\mathbf{R}^{d}$ 
\begin{equation*}
\int |k^{(\alpha )}(z,y)|dz\leq C|y|^{\alpha }.
\end{equation*}
\end{lemma}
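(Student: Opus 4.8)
The plan is to read \eqref{ff0} as a convolution identity, verify it on the Fourier side, and then obtain the bound $\int|k^{(\alpha)}(z,y)|\,dz\le C|y|^{\alpha}$ by scaling. Throughout fix $\alpha\in(0,1)$, so $\chi_{\alpha}\equiv0$ and $\partial^{\alpha}v(x)=\int[v(x+y)-v(x)]|y|^{-d-\alpha}\,dy$ for $v\in\mathcal{S}(\mathbf{R}^{d})$; I take the integration variable in \eqref{ff0} to sit in the first slot of $k^{(\alpha)}$, i.e. the right-hand side is $\int k^{(\alpha)}(z,y)\,\partial^{\alpha}u(x-z)\,dz$, matching the order in the stated $L^{1}$ bound. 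First I would record two standard Fourier facts. Since $\widehat{v(\cdot+y)}(\xi)=e^{2\pi iy\cdot\xi}\hat v(\xi)$ in the paper's convention, the symbol of $\partial^{\alpha}$ is $m(\xi)=\int(e^{2\pi iy\cdot\xi}-1)|y|^{-d-\alpha}\,dy$; the imaginary part vanishes under $y\mapsto-y$, and the substitution $y\mapsto y/|\xi|$ together with rotational invariance gives $m(\xi)=-\kappa_{d,\alpha}|\xi|^{\alpha}$ with $\kappa_{d,\alpha}=\int(1-\cos 2\pi y_{1})|y|^{-d-\alpha}\,dy\in(0,\infty)$ ($y_{1}$ the first coordinate), hence $\widehat{\partial^{\alpha}u}(\xi)=-\kappa_{d,\alpha}|\xi|^{\alpha}\hat u(\xi)$. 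Second, for $0<\alpha<d$ (in particular $\alpha<1\le d$) the Riesz kernel $|z|^{\alpha-d}$ is a tempered distribution with $\mathcal{F}[|z|^{\alpha-d}](\xi)=\gamma_{d,\alpha}|\xi|^{-\alpha}$, $\gamma_{d,\alpha}\in(0,\infty)$; translating, $\mathcal{F}[|\cdot+y|^{\alpha-d}](\xi)=e^{2\pi iy\cdot\xi}\gamma_{d,\alpha}|\xi|^{-\alpha}$, so $k^{(\alpha)}(\cdot,y)=|\cdot+y|^{\alpha-d}-|\cdot|^{\alpha-d}$ has Fourier transform $(e^{2\pi iy\cdot\xi}-1)\gamma_{d,\alpha}|\xi|^{-\alpha}$. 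Since $e^{2\pi iy\cdot\xi}-1=O(|\xi|)$ near $\xi=0$, this is a bounded continuous function with no singularity, consistent with $k^{(\alpha)}(\cdot,y)\in L^{1}(\mathbf{R}^{d})$ (proved in the last step).

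Now I would prove \eqref{ff0}. For $u\in\mathcal{S}(\mathbf{R}^{d})$ a direct estimate of the defining integral of $\partial^{\alpha}u$ — split into $|y|\le|x|/2$, using a second-order Taylor expansion of $u$ at $x$ and the oddness in $y$ of the first-order term, and $|y|>|x|/2$, using the rapid decay of $u$ — yields $|\partial^{\alpha}u(x)|\le C(1+|x|)^{-d-\alpha}$, so $\partial^{\alpha}u\in L^{1}(\mathbf{R}^{d})$. Then $g:=k^{(\alpha)}(\cdot,y)\ast\partial^{\alpha}u\in L^{1}(\mathbf{R}^{d})$ and $\hat g=\widehat{k^{(\alpha)}(\cdot,y)}\cdot\widehat{\partial^{\alpha}u}$. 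Multiplying the two formulas above,
\[
\hat g(\xi)=\left(e^{2\pi iy\cdot\xi}-1\right)\gamma_{d,\alpha}|\xi|^{-\alpha}\cdot\left(-\kappa_{d,\alpha}|\xi|^{\alpha}\right)\hat u(\xi)=-\gamma_{d,\alpha}\kappa_{d,\alpha}\left(e^{2\pi iy\cdot\xi}-1\right)\hat u(\xi),
\]
which is $-\gamma_{d,\alpha}\kappa_{d,\alpha}$ times the Fourier transform of $u(\cdot+y)-u$. Since $u(\cdot+y)-u$ and $g$ are continuous and in $L^{1}$, injectivity of the Fourier transform gives $u(x+y)-u(x)=C\,g(x)=C\int k^{(\alpha)}(z,y)\,\partial^{\alpha}u(x-z)\,dz$ with $C=C(\alpha,d)=-(\gamma_{d,\alpha}\kappa_{d,\alpha})^{-1}$, which is \eqref{ff0}. (Equivalently one may first establish the Riesz inversion $u(x)=C\int|x-z|^{\alpha-d}\partial^{\alpha}u(z)\,dz$ from the same two Fourier facts, and then subtract and change variables $z\mapsto x-z$.)

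The $L^{1}$ estimate is then pure scaling. Substituting $z=|y|\zeta$ and rotating so that $y/|y|$ becomes a fixed unit vector $e$,
\[
\int\left|k^{(\alpha)}(z,y)\right|dz=|y|^{\alpha}\int\left|\,|\zeta+e|^{\alpha-d}-|\zeta|^{\alpha-d}\,\right|d\zeta=:C_{d,\alpha}|y|^{\alpha},
\]
and $C_{d,\alpha}<\infty$: on a small ball about $\zeta=0$ and on a small ball about $\zeta=-e$ one term is integrable because $\alpha-d>-d$ while the other is bounded; on the rest of $\{|\zeta|\le2\}$ the integrand is bounded on a set of finite measure; and for $|\zeta|\ge2$ the mean value theorem gives $|\,|\zeta+e|^{\alpha-d}-|\zeta|^{\alpha-d}\,|\le C|\zeta|^{\alpha-d-1}$, integrable since $\alpha-d-1<-d$. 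This also retroactively justifies $k^{(\alpha)}(\cdot,y)\in L^{1}$ used above.

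I expect no genuine obstacle: the statement is essentially Fourier-transform bookkeeping, and the only delicate points — the non-integrable Riesz kernel $|z|^{\alpha-d}$ and the factor $|\xi|^{-\alpha}$ in its transform — are neutralized by passing to the difference $k^{(\alpha)}(\cdot,y)$, which is genuinely $L^{1}$, and by observing that $|\xi|^{-\alpha}$ occurs only multiplied by $e^{2\pi iy\cdot\xi}-1$, which vanishes at the origin. The mildly technical ingredient is the pointwise decay $|\partial^{\alpha}u(x)|\le C(1+|x|)^{-d-\alpha}$ for $u\in\mathcal{S}$, needed to place $\partial^{\alpha}u$ in $L^{1}$, but this is routine.
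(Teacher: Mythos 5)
Your argument is correct. The paper itself gives no proof here — Lemma \ref{lr1} is stated with a bare citation to Komatsu's Lemma 2.1 — so there is nothing in the paper to compare against. Your Fourier route is the natural one (and essentially the one in Komatsu): the symbol of $\partial^{\alpha}$ is $-\kappa_{d,\alpha}|\xi|^{\alpha}$, the Riesz kernel transforms to $\gamma_{d,\alpha}|\xi|^{-\alpha}$, the two $|\xi|^{\pm\alpha}$ factors cancel, and injectivity of the Fourier transform on $L^{1}\cap C$ gives the identity; the $L^{1}$ bound on $k^{(\alpha)}(\cdot,y)$ is then pure scaling, and your case analysis of the integral $\int\bigl||\zeta+e|^{\alpha-d}-|\zeta|^{\alpha-d}\bigr|\,d\zeta$ is complete. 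You also correctly resolved the paper's notational slip ($k^{(\alpha)}(y,z)$ in the display versus $k^{(\alpha)}(z,y)$ in the definition): integrating over the first argument is the only reading compatible with the stated bound $\int|k^{(\alpha)}(z,y)|\,dz\le C|y|^{\alpha}$. Two minor remarks, neither a gap: for $\alpha\in(0,1)$ you do not actually need the second-order Taylor expansion with odd cancellation to get $|\partial^{\alpha}u(x)|\le C_{N}(1+|x|)^{-N}$ on the region $|y|\le|x|/2$; the first-order bound $|u(x+y)-u(x)|\le|y|\sup_{|z-x|\le|x|/2}|\nabla u(z)|$ already suffices since $|y|^{1-d-\alpha}$ is locally integrable when $\alpha<1$. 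And your constant $C=-(\gamma_{d,\alpha}\kappa_{d,\alpha})^{-1}$ is negative, which is consistent with the lemma as stated since it only asserts $C=C(\alpha,d)$ without a sign.
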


\begin{corollary}
\label{cor1}Let $\alpha\in(0,1),p\geq1$. Then

(i) for $y\in \mathbf{R}^{d},$%
\begin{equation}
\left\vert \partial ^{\alpha }u\right\vert _{L_{p}\left( \mathbf{R}%
^{d}\right) }\leq C\left\vert u\right\vert _{H_{p}^{1}\left( \mathbf{R}%
^{d}\right) },u\in \mathcal{S}\left( \mathbf{R}^{d}\right) ;  \label{ff1}
\end{equation}%
\begin{eqnarray}
\left\vert u\left( \cdot +y\right) -u\right\vert _{L_{p}\left( \mathbf{R}%
^{d}\right) } &\leq &C\left\vert \partial ^{\alpha }u\right\vert
_{L_{p}\left( \mathbf{R}^{d}\right) }\left\vert y\right\vert ^{\alpha },
\label{ff11} \\
\left\vert u\left( \cdot +y\right) -u-y\cdot \nabla u\right\vert
_{L_{p}\left( \mathbf{R}^{d}\right) } &\leq &C\left\vert \partial ^{\alpha
}\nabla u\right\vert _{L_{p}\left( \mathbf{R}^{d}\right) }\left\vert
y\right\vert ^{1+\alpha },  \label{ff22}
\end{eqnarray}%
$u\in \mathcal{S}\left( \mathbf{R}^{d}\right) .$

(ii) for any $\varepsilon >0,$ 
\begin{equation*}
\partial ^{\alpha }\left[ u\left( \varepsilon \cdot \right) \right]
=\varepsilon ^{\alpha }(\partial ^{\alpha }u)\left( \varepsilon x\right)
,\partial ^{\alpha }\nabla \left[ u\left( \varepsilon \cdot \right) \right]
=\varepsilon ^{1+\alpha }(\partial ^{\alpha }\nabla u)\left( \varepsilon
x\right) ,x\in \mathbf{R}^{d},
\end{equation*}
\end{corollary}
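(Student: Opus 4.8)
The plan is to obtain all four assertions as direct consequences of Lemma~\ref{lr1} together with the integral representation $\partial ^{\alpha }u(x)=\int [u(x+y)-u(x)]|y|^{-d-\alpha }dy$, which is valid for $\alpha \in (0,1)$ since then $\chi _{\alpha }\equiv 0$; the one analytic tool used throughout is Minkowski's integral inequality.

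For (\ref{ff1}) I would split the defining integral for $\partial ^{\alpha }u$ into the regions $\{|y|\leq 1\}$ and $\{|y|>1\}$. On $\{|y|>1\}$ bound $|u(x+y)-u(x)|\leq |u(x+y)|+|u(x)|$ and use $\int_{|y|>1}|y|^{-d-\alpha }dy<\infty $ together with Minkowski and translation invariance of the $L_{p}$ norm to get a contribution $\leq C|u|_{L_{p}}$. On $\{|y|\leq 1\}$ write $u(x+y)-u(x)=\int_{0}^{1}\nabla u(x+sy)\cdot y\,ds$, so the integrand is dominated by $|y|^{1-d-\alpha }\int_{0}^{1}|\nabla u(x+sy)|ds$; since $1-d-\alpha >-d$ the weight is integrable near the origin, and Minkowski gives a contribution $\leq C|\nabla u|_{L_{p}}$. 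Adding the two estimates yields $|\partial ^{\alpha }u|_{L_{p}}\leq C|u|_{H_{p}^{1}}$.

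Inequality (\ref{ff11}) is immediate from (\ref{ff0}): applying Minkowski's integral inequality to $u(x+y)-u(x)=C\int k^{(\alpha )}(y,z)\partial ^{\alpha }u(x-z)dz$ and invoking the kernel bound $\int |k^{(\alpha )}(z,y)|dz\leq C|y|^{\alpha }$ from Lemma~\ref{lr1} gives $|u(\cdot +y)-u|_{L_{p}}\leq C|y|^{\alpha }|\partial ^{\alpha }u|_{L_{p}}$. For (\ref{ff22}) I would combine this with the fundamental theorem of calculus: from $u(x+y)-u(x)-y\cdot \nabla u(x)=\int_{0}^{1}[\nabla u(x+sy)-\nabla u(x)]\cdot y\,ds$, applying (\ref{ff11}) componentwise to $\nabla u$ with increment $sy$ (so that $|sy|^{\alpha }\leq |y|^{\alpha }$ for $s\in [0,1]$) and using Minkowski once more produces $|u(\cdot +y)-u-y\cdot \nabla u|_{L_{p}}\leq |y|\int_{0}^{1}C|sy|^{\alpha }|\partial ^{\alpha }\nabla u|_{L_{p}}ds\leq C|y|^{1+\alpha }|\partial ^{\alpha }\nabla u|_{L_{p}}$.

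Finally, part (ii) is a change of variables. For $v(x)=u(\varepsilon x)$ one has $\partial ^{\alpha }v(x)=\int [u(\varepsilon x+\varepsilon y)-u(\varepsilon x)]|y|^{-d-\alpha }dy$, and the substitution $z=\varepsilon y$ extracts the factor $\varepsilon ^{\alpha }$, i.e. $\partial ^{\alpha }[u(\varepsilon \cdot )](x)=\varepsilon ^{\alpha }(\partial ^{\alpha }u)(\varepsilon x)$; applying this identity to each component of $\nabla u$ and using $\nabla [u(\varepsilon \cdot )](x)=\varepsilon (\nabla u)(\varepsilon x)$ gives the second formula with exponent $1+\alpha $. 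There is no genuine obstacle in this corollary; the only points calling for a little care are checking that the weights $|y|^{-d-\alpha }1_{\{|y|>1\}}$, $|y|^{1-d-\alpha }1_{\{|y|\leq 1\}}$ and $k^{(\alpha )}(\cdot ,y)$ are integrable so that Minkowski's inequality applies, and the bookkeeping of the $\alpha $- and $(1+\alpha )$-homogeneity when moving finite differences through $\partial ^{\alpha }$.
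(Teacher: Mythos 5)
Your proposal is correct and follows essentially the same route as the paper's proof: for (\ref{ff1}) you split the defining integral at $|y|=1$, use the fundamental theorem of calculus on the inner region and the triangle inequality on the outer one, then apply Minkowski; (\ref{ff11}) comes from Lemma~\ref{lr1} via (\ref{ff0}) and the kernel bound; (\ref{ff22}) from writing the second difference as $\int_0^1 y\cdot[\nabla u(x+sy)-\nabla u(x)]ds$ and invoking (\ref{ff11}); and (ii) by the substitution $z=\varepsilon y$. No meaningful divergence from the paper's argument.
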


\begin{proof}
Let $u\in \mathcal{S}(\mathbf{R}^{d})$. Since for $x\in \mathbf{R}^{d},$ 
\begin{eqnarray*}
|\partial ^{\alpha }u\left( x\right) | &\leq &\int_{\left\vert y\right\vert
\leq 1}\int_{0}^{1}|y\cdot \nabla u\left( x+sy\right) |ds\frac{dy}{%
\left\vert y\right\vert ^{d+\alpha }} \\
&&+\int_{\left\vert y\right\vert >1}\left[ \left\vert u\left( x+y\right)
\right\vert +u\left( x\right) \right] \frac{dy}{\left\vert y\right\vert
^{d+\alpha }}
\end{eqnarray*}%
(\ref{ff1}) follows. Applying generalized Minkowski inequality to (\ref{ff0}%
), we derive easily (\ref{ff11}). Similarly, using%
\begin{equation*}
u\left( x+y\right) -u\left( x\right) -y\cdot \nabla u\left( x\right)
=\int_{0}^{1}y\cdot \left[ \nabla u\left( x+sy\right) -\nabla u\left(
x\right) \right] ds
\end{equation*}%
and (\ref{ff11}) we derive (\ref{ff22}).

Changing the variable of integration, 
\begin{equation*}
\partial ^{\alpha }[u\left( \varepsilon \cdot \right) ]\left( x\right)
=\varepsilon ^{\alpha }\int \left[ u\left( \varepsilon x+y\right) -u\left(
\varepsilon x\right) \right] \frac{dy}{\left\vert y\right\vert ^{d+\alpha }}%
=\varepsilon ^{\alpha }(\partial ^{\alpha }u)\left( \varepsilon x\right)
,x\in \mathbf{R}^{d}.
\end{equation*}
\end{proof}

\begin{corollary}
\label{cor2}Let $\pi \in \mathfrak{A}^{\sigma }$ and 
\begin{equation*}
\int_{\left\vert z\right\vert \leq 1}\left\vert z\right\vert ^{\alpha
_{1}}\pi (dz)+\int_{\left\vert z\right\vert >1}\left\vert z\right\vert
^{\alpha _{2}}\pi (dz)\leq N,
\end{equation*}%
where\emph{\ }$\alpha _{1},\alpha _{2}\in (0,1]\text{ if }\sigma \in (0,1)%
\text{; }\alpha _{1},\alpha _{2}\in (1,2]\text{ if }\sigma \in (1,2)$\emph{; 
}$\alpha _{1}\in (1,2]$\emph{\ }and\emph{\ }$\alpha _{2}\in (0,1]$\emph{\ }%
for\emph{\ }$\sigma =1$\emph{. }

Then there is a constant $C=C\left( N\right) $ such that for any $v\in 
\mathcal{S}\left( \mathbf{R}^{d}\right) ,$ (assuming $\partial ^{\gamma
}=\nabla $ if $\gamma =1$),%
\begin{eqnarray*}
\left\vert L^{\pi }v\right\vert _{L_{1}} &\leq &C\left( \left\vert \partial
^{\alpha _{1}}v\right\vert _{L_{1}}+\left\vert v\right\vert _{L_{1}}\right) ,
\\
\left\vert L^{\pi }v\right\vert _{L_{1}} &\leq &C\left( \left\vert \nabla
v\right\vert _{L_{1}}+\left\vert \partial ^{\alpha _{2}}v\right\vert
_{L_{1}}\right) ,
\end{eqnarray*}%
if $\sigma \in \left( 0,1\right) $; 
\begin{eqnarray*}
\left\vert L^{\pi }v\right\vert _{L_{1}} &\leq &C\left( \left\vert \partial
^{\alpha _{1}-1}\nabla v\right\vert _{L_{1}}+\left\vert v\right\vert
_{L_{1}}\right) , \\
\left\vert L^{\pi }v\right\vert _{L_{1}} &\leq &C\left( \left\vert
D^{2}v\right\vert _{L_{1}}+\left\vert \partial ^{\alpha _{2}}v\right\vert
_{L_{1}}\right)
\end{eqnarray*}%
if $\sigma =1$ ; 
\begin{eqnarray*}
\left\vert L^{\pi }v\right\vert _{L_{1}} &\leq &C\left( \left\vert \partial
^{\alpha _{1}-1}\nabla v\right\vert _{L_{1}}+\left\vert \nabla v\right\vert
_{L_{1}}\right) , \\
\left\vert L^{\pi }v\right\vert _{L_{1}} &\leq &C\left( \left\vert
D^{2}v\right\vert _{L_{1}}+\left\vert \partial ^{\alpha _{2}-1}\nabla
v\right\vert _{L_{1}}\right)
\end{eqnarray*}%
if $\sigma \in \left( 1,2\right) $.
\end{corollary}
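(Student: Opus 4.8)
The plan is to split the integral defining $L^{\pi}v$ into the near part $\int_{|y|\le 1}$ and the far part $\int_{|y|>1}$, estimate each piece separately by the generalized Minkowski inequality, and then invoke Corollary \ref{cor1} to convert the pointwise increments of $v$ into $L_1$-norms of fractional derivatives. The key observation is that the three cases $\sigma\in(0,1)$, $\sigma=1$, $\sigma\in(1,2)$ differ only in the choice of the compensator $\chi_{\sigma}$, so in each case the integrand $\nabla^{\sigma}_y v(x)=v(x+y)-v(x)-\chi_{\sigma}(y)\,\nabla v(x)\cdot y$ is controlled by an increment estimate matched to the number of subtracted Taylor terms.

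First, for $\sigma\in(0,1)$ (so $\chi_{\sigma}\equiv 0$): on $|y|\le 1$, write $v(x+y)-v(x)=C\int k^{(\alpha_1)}(y,z)\,\partial^{\alpha_1}v(x-z)\,dz$ via \eqref{ff0} of Lemma \ref{lr1}, apply Minkowski to get $|v(\cdot+y)-v|_{L_1}\le C|y|^{\alpha_1}|\partial^{\alpha_1}v|_{L_1}$, and integrate against $\pi(dy)$; the factor $\int_{|y|\le1}|y|^{\alpha_1}\pi(dy)\le N$ finishes the near part. On $|y|>1$, simply bound $|v(x+y)-v(x)|_{L_1}\le 2|v|_{L_1}$, or alternatively use the same Lemma \ref{lr1} representation with exponent $\alpha_2$ to get the $|\partial^{\alpha_2}v|_{L_1}$ version; the factor $\int_{|y|>1}|y|^{\alpha_2}\pi(dy)\le N$ (or the trivial bound $\pi(\{|y|>1\})\le N$) closes it. This yields both displayed inequalities in the $\sigma\in(0,1)$ case.

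For $\sigma\in(1,2)$ (so $\chi_{\sigma}\equiv 1$): on $|y|\le 1$ the integrand is the second-order increment $v(x+y)-v(x)-y\cdot\nabla v(x)$; using $v(x+y)-v(x)-y\cdot\nabla v(x)=\int_0^1 y\cdot[\nabla v(x+sy)-\nabla v(x)]\,ds$ together with \eqref{ff22} (applied to each component of $\nabla v$) gives $|v(\cdot+y)-v-y\cdot\nabla v|_{L_1}\le C|y|^{1+(\alpha_1-1)}|\partial^{\alpha_1-1}\nabla v|_{L_1}=C|y|^{\alpha_1}|\partial^{\alpha_1-1}\nabla v|_{L_1}$, and integrating against $\int_{|y|\le1}|y|^{\alpha_1}\pi(dy)\le N$ produces the first near-part bound; crudely bounding $|y|^2|D^2v|$ along the segment instead gives the $|D^2v|_{L_1}$ variant. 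On $|y|>1$, split $v(x+y)-v(x)-y\cdot\nabla v(x)$ as $[v(x+y)-v(x)]-y\cdot\nabla v(x)$; the first bracket is handled as before by Lemma \ref{lr1} with exponent $\alpha_2$, while $\int_{|y|>1}|y|\,|\nabla v|_{L_1}\pi(dy)$ is controlled since $\alpha_2>1$ bounds $|y|\le|y|^{\alpha_2}$ there, yielding $|D^2v|_{L_1}+|\partial^{\alpha_2-1}\nabla v|_{L_1}$. The case $\sigma=1$ is the hybrid: on $|y|\le 1$ one has the compensated (second-order-type) increment, handled exactly as in the $\sigma\in(1,2)$ near part, while on $|y|>1$ there is no compensator, so one uses the $\sigma\in(0,1)$ far-part argument with exponent $\alpha_2\in[0,1)$; the assumption $\int_{|y|\le1}y\,\pi(dy)$ being well-defined (from $\pi\in\mathfrak{A}^1$, where the principal-value symmetry makes the compensator harmless) is what lets the near-part integral converge.

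The main obstacle is purely bookkeeping: keeping track, in each of the three $\sigma$-regimes and for each of the two claimed inequalities, of exactly which Taylor expansion (zeroth, first, or mixed order) is subtracted and matching it to the correct increment estimate from Corollary \ref{cor1} so that the homogeneity in $|y|$ lines up with the available moment bound $\int|z|^{\alpha_i}\pi(dz)\le N$. There is no analytic difficulty beyond Lemma \ref{lr1} and the generalized Minkowski inequality; the constant $C$ depends only on $N$ (and on $\alpha_1,\alpha_2,d$, absorbed into $C(N)$), as claimed.
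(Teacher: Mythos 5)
Your plan is the paper's plan: split $L^{\pi}v$ at $|y|=1$, bound $\bigl|v(\cdot+y)-v-\chi_{\sigma}(y)\,y\cdot\nabla v\bigr|_{L_{1}}$ case by case using the increment estimates of Lemma \ref{lr1} and Corollary \ref{cor1}, and integrate against $\pi(dy)$ using the moment hypothesis. That is exactly what the paper does, so the route is the same and the argument is essentially correct.

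One step as stated would not go through and should be fixed. For the far part $|y|>1$ when $\sigma\in(1,2)$ (second inequality), you propose to split $v(x+y)-v(x)-y\cdot\nabla v(x)$ into $[v(x+y)-v(x)]$ and $-y\cdot\nabla v(x)$ and then ``handle the first bracket by Lemma \ref{lr1} with exponent $\alpha_{2}$.'' But $\alpha_{2}\in(1,2]$ is outside the range $\alpha\in(0,1)$ of Lemma \ref{lr1}, and applying the lemma to $\nabla v$ with exponent $\alpha_{2}-1$ only controls differences of $\nabla v$, not $v(\cdot+y)-v$ itself; the first bracket alone only yields $|y|\,|\nabla v|_{L_{1}}$, which produces the first far-part bound, not $|\partial^{\alpha_{2}-1}\nabla v|_{L_{1}}$. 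The correct move (and what the paper uses) is to keep the second-order increment together, write
\begin{equation*}
v(x+y)-v(x)-y\cdot\nabla v(x)=\int_{0}^{1}y\cdot\bigl[\nabla v(x+sy)-\nabla v(x)\bigr]\,ds,
\end{equation*}
and apply \eqref{ff11} (not \eqref{ff22}) componentwise to $\nabla v$ with exponent $\alpha_{2}-1\in(0,1]$, giving $\bigl|v(\cdot+y)-v-y\cdot\nabla v\bigr|_{L_{1}}\le C\,|\partial^{\alpha_{2}-1}\nabla v|_{L_{1}}\,|y|^{\alpha_{2}}$ on $|y|>1$. The same reference correction (\eqref{ff11} on $\nabla v$, or equivalently \eqref{ff22} on $v$ with exponent $\alpha_{1}-1$, rather than ``\eqref{ff22} on $\nabla v$'') applies to your near-part estimate for $\sigma\in[1,2)$. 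Also, the remark about the principal-value condition at $\sigma=1$ is not needed for the $L_{1}$-bound; the near part converges simply because $\int_{|y|\le1}|y|^{\alpha_{1}}\pi(dy)\le N$ with $\alpha_{1}>1$.
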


\begin{proof}
By Corollary \ref{cor1}, for $\left\vert y\right\vert \leq 1$%
\begin{equation*}
|v\left( \cdot +y\right) -v-\chi _{\sigma }\left( y\right) y\cdot \nabla
u|_{L_{1}}\leq \left\{ 
\begin{array}{cc}
\left\vert \partial ^{\alpha _{1}}v\right\vert _{L_{1}}\left\vert
y\right\vert ^{\alpha _{1}} & \text{if }\sigma \in \left( 0,1\right) \\ 
\left\vert \partial ^{\alpha _{1}-1}\nabla v\right\vert _{L_{1}}\left\vert
y\right\vert ^{\alpha _{1}} & \text{if }\sigma \in \lbrack 1,2)%
\end{array}%
\right.
\end{equation*}%
and for $\left\vert y\right\vert >1,$%
\begin{equation*}
|v\left( \cdot +y\right) -v-\chi _{\sigma }\left( y\right) y\cdot \nabla
u|_{L_{1}}\leq \left\{ 
\begin{array}{cc}
2\left\vert v\right\vert _{L_{1}} & \text{if }\sigma \in (0,1], \\ 
2\left\vert \nabla v\right\vert _{L_{1}}\left\vert y\right\vert ^{\alpha
_{2}} & \text{if }\sigma \in (1,2).%
\end{array}%
\right.
\end{equation*}

On the other hand, for $\left\vert y\right\vert \leq 1$%
\begin{equation*}
|v\left( \cdot +y\right) -v-\chi _{\sigma }\left( y\right) y\cdot \nabla
u|_{L_{1}}\leq \left\{ 
\begin{array}{cc}
\left\vert \nabla v\right\vert _{L_{1}}\left\vert y\right\vert ^{\alpha _{1}}
& \text{if }\sigma \in \left( 0,1\right) , \\ 
\left\vert D^{2}v\right\vert _{L_{1}}\left\vert y\right\vert ^{\alpha _{1}}
& \text{if }\sigma \in \lbrack 1,2),%
\end{array}%
\right.
\end{equation*}%
and for $\left\vert y\right\vert >1,$%
\begin{equation*}
|v\left( \cdot +y\right) -v-\chi _{\sigma }\left( y\right) y\cdot \nabla
u|_{L_{1}}\leq \left\{ 
\begin{array}{cc}
\left\vert \partial ^{\alpha _{2}}v\right\vert _{L_{1}} & \text{if }\sigma
\in (0,1], \\ 
2\left\vert \partial ^{\alpha _{2}-1}\nabla v\right\vert _{L_{1}}\left\vert
y\right\vert ^{\alpha _{2}} & \text{if }\sigma \in (1,2).%
\end{array}%
\right.
\end{equation*}%
The statement follows.
\end{proof}

In addition, the following holds.

\begin{lemma}
\label{le3}For any $\beta \in \left[ 0,1\right] ,a\geq 0,\left\vert
z\right\vert \leq 1$ and $u\in \mathcal{S}\left( \mathbf{R}^{d}\right) ,$ 
\begin{eqnarray*}
&&\int_{\left\vert x\right\vert \geq a}\left\vert u\left( x+z\right)
-u\left( x\right) \right\vert dx \\
&\leq &2^{1-\beta }\left( \int_{\left\vert x\right\vert \geq (a-1)\vee
0}\left\vert u\left( x\right) \right\vert dx\right) ^{1-\beta }\left(
\int_{\left\vert x\right\vert \geq (a-1)\vee 0}\left\vert \nabla u\left(
x\right) \right\vert dx\right) ^{\beta }\left\vert z\right\vert ^{\beta },
\end{eqnarray*}
\end{lemma}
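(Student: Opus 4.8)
The statement is an interpolation inequality: it asserts that the $L_1$-norm of the finite difference $u(\cdot+z)-u$, restricted to the exterior region $\{|x|\ge a\}$, is controlled by a geometric mean of the $L_1$-norm of $u$ and the $L_1$-norm of $\nabla u$ on the slightly enlarged region $\{|x|\ge (a-1)\vee 0\}$, with the expected power $|z|^\beta$ of the shift. The plan is to treat the two extreme cases $\beta=1$ and $\beta=0$ directly and then obtain the intermediate exponents by H\"older's inequality pointwise in $z$.

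First I would dispose of $\beta=1$. Writing $u(x+z)-u(x)=\int_0^1 z\cdot\nabla u(x+sz)\,ds$, taking absolute values, integrating over $\{|x|\ge a\}$, and applying the generalized Minkowski inequality to exchange the $ds$-integral with the $dx$-integral, one gets
\begin{equation*}
\int_{|x|\ge a}|u(x+z)-u(x)|\,dx\le |z|\int_0^1\!\!\int_{|x|\ge a}|\nabla u(x+sz)|\,dx\,ds .
\end{equation*}
Since $|z|\le1$ and $s\in[0,1]$, the change of variables $x\mapsto x-sz$ maps $\{|x|\ge a\}$ into $\{|x|\ge a-|sz|\}\subseteq\{|x|\ge (a-1)\vee0\}$, so the inner integral is bounded by $\int_{|x|\ge (a-1)\vee0}|\nabla u(x)|\,dx$ uniformly in $s$; this gives the $\beta=1$ bound with constant $1$ (hence $\le 2^{0}$). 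For $\beta=0$, simply estimate $|u(x+z)-u(x)|\le |u(x+z)|+|u(x)|$ and note that $\int_{|x|\ge a}|u(x+z)|\,dx\le\int_{|x|\ge (a-1)\vee0}|u(x)|\,dx$ by the same translation argument, so the left side is at most $2\int_{|x|\ge (a-1)\vee0}|u|\,dx=2^{1-0}\big(\int_{|x|\ge(a-1)\vee0}|u|\big)^{1}$.

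To interpolate, I would not split the integral but instead write, for each fixed $x$ and $z$ with $|z|\le1$,
\begin{equation*}
|u(x+z)-u(x)|=|u(x+z)-u(x)|^{1-\beta}\,|u(x+z)-u(x)|^{\beta}
\le\big(|u(x+z)|+|u(x)|\big)^{1-\beta}\Big(|z|\int_0^1|\nabla u(x+sz)|\,ds\Big)^{\beta},
\end{equation*}
integrate over $\{|x|\ge a\}$, and apply H\"older with exponents $1/(1-\beta)$ and $1/\beta$. This produces the product
\begin{equation*}
\Big(\int_{|x|\ge a}\big(|u(x+z)|+|u(x)|\big)dx\Big)^{1-\beta}\Big(|z|\int_{|x|\ge a}\int_0^1|\nabla u(x+sz)|\,ds\,dx\Big)^{\beta},
\end{equation*}
and the two factors are handled exactly as in the $\beta=0$ and $\beta=1$ cases above (a factor $2^{1-\beta}$ appears from $|u(x+z)|+|u(x)|$, and the translation $x\mapsto x-sz$ again enlarges $\{|x|\ge a\}$ only to $\{|x|\ge(a-1)\vee0\}$ since $a\ge0$ forces $(a-1)\vee0\le a-|sz|$ when $a\ge1$, and is trivially true when $a<1$). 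Assembling the factors yields precisely the claimed inequality. The only point requiring a moment of care—the ``main obstacle,'' such as it is—is checking that the domain shift is uniformly absorbed into $\{|x|\ge(a-1)\vee0\}$ in all regimes of $a$ (namely $a\ge1$ versus $0\le a<1$, where in the latter case the enlarged region is all of $\mathbf R^d$ and the containment is automatic); everything else is a routine application of Minkowski and H\"older.
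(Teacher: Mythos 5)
Your proposal is correct and takes essentially the same route as the paper: split $|u(x+z)-u(x)|$ into powers $1-\beta$ and $\beta$, bound the $\beta$-power by $\bigl(|z|\int_0^1|\nabla u(x+sz)|\,ds\bigr)^\beta$, apply H\"older with exponents $1/(1-\beta)$ and $1/\beta$, and use translation to enlarge the domain to $\{|x|\ge(a-1)\vee 0\}$. The only (cosmetic) difference is that you apply the triangle inequality to the factor raised to the $1-\beta$ power before integrating, whereas the paper keeps $|u(x+z)-u(x)|^{1-\beta}$ intact through the H\"older step and bounds $\int_{|x|\ge a}|u(x+z)-u(x)|\,dx\le 2\int_{|x|\ge(a-1)\vee 0}|u|\,dx$ afterward; both yield the same constant $2^{1-\beta}$.
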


\begin{proof}
Let $u\in \mathcal{S}\left( \mathbf{R}^{d}\right) $. For $\beta \in \left[
0,1\right] ,x,z\in \mathbf{R}^{d},$ 
\begin{equation*}
\left\vert u\left( x+z\right) -u\left( x\right) \right\vert \leq \left\vert
u\left( x+z\right) -u\left( x\right) \right\vert ^{1-\beta }\left(
\int_{0}^{1}\left\vert \nabla u\left( x+sz\right) \right\vert ds\right)
^{\beta }\left\vert z\right\vert ^{\beta },
\end{equation*}%
and%
\begin{eqnarray*}
&&\left\vert u\left( x+z\right) -u(x)-z\cdot \nabla u(x)\right\vert \\
&\leq &\int_{0}^{1}\left\vert \nabla u\left( x+sz\right) -\nabla u\left(
x\right) \right\vert ds\left\vert z\right\vert
\end{eqnarray*}%
By H\"{o}lder inequality, for $\left\vert z\right\vert \leq 1,$%
\begin{eqnarray*}
&&\int_{\left\vert x\right\vert \geq a}\left\vert u\left( x+z\right)
-u\left( x\right) \right\vert dx \\
&\leq &\int_{\left\vert x\right\vert \geq a}\left\vert u\left( x+z\right)
-u\left( x\right) \right\vert ^{1-\beta }\left( \int_{0}^{1}\left\vert
\nabla u\left( x+sz\right) \right\vert ds\right) ^{\beta }dx\left\vert
z\right\vert ^{\beta } \\
&\leq &\left( \int_{\left\vert x\right\vert \geq a}\left\vert u\left(
x+z\right) -u\left( x\right) \right\vert dx\right) ^{1-\beta }\left(
\int_{0}^{1}\int_{\left\vert x\right\vert \geq a}\left\vert \nabla u\left(
x+sz\right) \right\vert dsdx\right) ^{\beta }\left\vert z\right\vert ^{\beta
} \\
&\leq &\left( 2\int_{\left\vert x\right\vert \geq (a-1)\vee 0}\left\vert
u\left( x\right) \right\vert dx\right) ^{1-\beta }\left( \int_{\left\vert
x\right\vert \geq (a-1)\vee 0}\left\vert \nabla u\left( x\right) \right\vert
dx\right) ^{\beta }\left\vert z\right\vert ^{\beta }.
\end{eqnarray*}
\end{proof}

\begin{corollary}
\label{c2}For any $\beta \in \left[ 0,1\right] ,a\geq 0,\left\vert
z\right\vert \leq 1$ and $u\in \mathcal{S}\left( \mathbf{R}^{d}\right) ,$ 
\begin{eqnarray*}
&&\int_{\left\vert x\right\vert \geq a}\left\vert u\left( x+z\right)
-u(x)-z\cdot \nabla u(x)\right\vert dx \\
&\leq &2^{1-\beta }\left( \int_{\left\vert x\right\vert \geq (a-1)\vee
0}\left\vert \nabla u\left( x\right) \right\vert dx\right) ^{1-\beta }\left(
\int_{\left\vert x\right\vert \geq (a-1)\vee 0}\left\vert D^{2}u\left(
x\right) \right\vert dx\right) ^{\beta }\left\vert z\right\vert ^{1+\beta }.
\end{eqnarray*}
\end{corollary}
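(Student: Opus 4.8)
The plan is to deduce Corollary \ref{c2} directly from Lemma \ref{le3} applied to the gradient $\nabla u$ in place of $u$, together with the first-order Taylor remainder identity already used in the proof of Lemma \ref{le3}. The only point requiring a word of justification is that Lemma \ref{le3}, although stated for scalar $u$, is valid verbatim for the vector-valued function $\nabla u$: its proof uses only the triangle inequality, the fundamental theorem of calculus along the segment $[x,x+z]$, H\"{o}lder's inequality, and the translation bound $\int_{|x|\ge a}|g(x+sz)|\,dx\le \int_{|x|\ge (a-1)\vee 0}|g(x)|\,dx$ for $|sz|\le 1$, all of which hold with Euclidean norms on the target.

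First I would start from
\begin{equation*}
\left\vert u\left( x+z\right) -u(x)-z\cdot \nabla u(x)\right\vert \leq
\int_{0}^{1}\left\vert \nabla u\left( x+sz\right) -\nabla u\left( x\right)
\right\vert ds\,\left\vert z\right\vert ,
\end{equation*}
integrate over $\{|x|\ge a\}$, and interchange the $x$- and $s$-integrations by Tonelli. For each fixed $s\in[0,1]$ the inner integral $\int_{|x|\ge a}|\nabla u(x+sz)-\nabla u(x)|\,dx$ is estimated by Lemma \ref{le3} with $u$ replaced by $\nabla u$ and $z$ replaced by $sz$ (legitimate since $|sz|\le|z|\le1$), giving the bound
\begin{equation*}
2^{1-\beta }\Big(\int_{|x|\ge (a-1)\vee 0}|\nabla u(x)|\,dx\Big)^{1-\beta }\Big(\int_{|x|\ge (a-1)\vee 0}|D^{2}u(x)|\,dx\Big)^{\beta }|sz|^{\beta },
\end{equation*}
where I use $|\nabla(\nabla u)|=|D^{2}u|$ (up to the fixed norm convention, absorbed in the constant, or taken as the Hilbert--Schmidt norm throughout).

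Finally I would bound $|sz|^{\beta}=s^{\beta}|z|^{\beta}\le|z|^{\beta}$, pull the resulting $s$-independent quantity out of $\int_0^1 ds$, and multiply by the outer factor $|z|$, which produces the stated exponent $|z|^{1+\beta}$ and the constant $2^{1-\beta}$. No step presents a genuine obstacle; the only care needed is the observation that Lemma \ref{le3} transfers to vector-valued integrands and that the segment parameter $s$ never pushes $|sz|$ above $1$, so that the translation-of-domain inequality with the $(a-1)\vee 0$ lower bound remains available at every $s$.
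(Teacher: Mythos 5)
Your proposal is correct and coincides with the paper's own (very terse) proof: the paper likewise starts from $\left\vert u(x+z)-u(x)-z\cdot\nabla u(x)\right\vert\leq\int_{0}^{1}\left\vert\nabla u(x+sz)-\nabla u(x)\right\vert ds\,\left\vert z\right\vert$ and then simply invokes Lemma \ref{le3}, which is exactly your application of that lemma to $\nabla u$ with increment $sz$. The additional remarks you supply (Tonelli, the bound $s^{\beta}\leq1$, and the observation that the proof of Lemma \ref{le3} transfers verbatim to vector-valued integrands) are the details the paper leaves implicit.
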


\begin{proof}
For $\beta \in \left[ 0,1\right] ,x,z\in \mathbf{R}^{d},\left\vert
z\right\vert \leq 1,$ 
\begin{eqnarray*}
&&\left\vert u\left( x+z\right) -u(x)-z\cdot \nabla u(x)\right\vert \\
&\leq &\int_{0}^{1}\left\vert \nabla u\left( x+sz\right) -\nabla u\left(
x\right) \right\vert ds\left\vert z\right\vert ,
\end{eqnarray*}%
and the claim follows by Lemma \ref{le3}.
\end{proof}

\subsection{Density estimates}

We start with the following simple statement about the existence of a
probability density function (pdf).

\begin{lemma}
\label{l1}Let $\mu ^{0}$ be a nonnegative measure on $\mathbf{R}_{0}^{d}$
such that $\chi _{\left\vert y\right\vert \leq 1}\mu ^{0}\left( dy\right)
=\mu ^{0}\left( dy\right) $ and 
\begin{eqnarray*}
\int \left\vert y\right\vert d\mu ^{0} &\leq &K_{0}\text{ if }\sigma \in
\left( 0,1\right) , \\
\int \left\vert y\right\vert ^{2}d\mu ^{0} &\leq &K_{0}\text{ if }\sigma \in
\{1,2).
\end{eqnarray*}%
Let $\eta $ be a r.v. such that 
\begin{equation}
\mathbf{E}e^{i2\pi \xi \cdot \eta }=\exp \left\{ \psi _{0}\left( \xi \right)
\right\} ,\xi \in \mathbf{R}^{d},  \label{a0}
\end{equation}%
where 
\begin{equation*}
\psi _{0}\left( \xi \right) =\int \left[ e^{-i2\pi \xi \cdot y}-1-\chi
_{\sigma }\left( y\right) i2\pi \xi \cdot y\right] \mu ^{0}\left( dy\right)
,\xi \in \mathbf{R}^{d}.
\end{equation*}%
Assume $n\geq 0$ and 
\begin{equation}
\int \left\vert \xi \right\vert ^{n}[1+1_{n\geq 1}\zeta \left( \xi \right)
]^{d+3}\exp \left\{ \phi _{0}\left( \xi \right) \right\} d\xi \leq K_{0},
\label{b}
\end{equation}%
where $\phi _{0}\left( \xi \right) =\func{Re}\psi _{0}\left( \xi \right)
,\xi \in \mathbf{R}^{d}$ and%
\begin{equation*}
\zeta \left( \xi \right) =\int_{\left\vert y\right\vert \leq 1}\chi _{\sigma
}\left( y\right) \left\vert y\right\vert [\left( \left\vert \xi \right\vert
\left\vert y\right\vert \right) \wedge 1]\mu ^{0}\left( dy\right) ,\xi \in 
\mathbf{R}^{d}.
\end{equation*}%
Then $\eta $ has a pdf $p_{0}\left( x\right) ,x\in \mathbf{R}^{d},$ such
that 
\begin{equation*}
\sup_{x}\left\vert \partial ^{\beta }p_{0}\left( x\right) \right\vert +\int
(1+\left\vert x\right\vert ^{2})\left\vert \partial ^{\beta }p_{0}\left(
x\right) \right\vert dx\leq C\text{ }\forall \left\vert \beta \right\vert
\leq n
\end{equation*}%
for some $C=C\left( d,K_{0}\right) .$
\end{lemma}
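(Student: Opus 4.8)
The statement is a smoothing result for an infinitely divisible law with Lévy measure $\mu^0$ supported in the unit ball, and the natural route is the Fourier-analytic one: write $p_0 = \mathcal F^{-1}[\exp\psi_0]$ and show this is legitimate by proving $\xi\mapsto \exp\{\psi_0(\xi)\}$ and its relevant derivatives are integrable, with bounds controlled by $K_0$. First I would record the elementary pointwise bound $|e^{\psi_0(\xi)}| = e^{\phi_0(\xi)}$ with $\phi_0(\xi) = \mathrm{Re}\,\psi_0(\xi) = -\int_{|y|\le 1}[1-\cos(2\pi\xi\cdot y)]\mu^0(dy) \le 0$, so that hypothesis \eqref{b} with $n=0$ already gives $\int e^{\phi_0(\xi)}\,d\xi \le K_0 < \infty$ and hence (by the Riemann–Lebesgue/Fourier inversion theorem for $L_1$ Fourier transforms) a bounded continuous density $p_0 = \mathcal F^{-1}[e^{\psi_0}]$ with $\sup_x|p_0(x)| \le K_0$.

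**Derivatives and polynomial weights.** The two remaining features — the bound on $\partial^\beta p_0$ for $|\beta|\le n$ and the bound on $\int(1+|x|^2)|\partial^\beta p_0|\,dx$ — both come from differentiating under the inverse Fourier transform. For $\partial^\beta p_0$ one has $\widehat{\partial^\beta p_0}(\xi)$ a constant times $\xi^\beta e^{\psi_0(\xi)}$ up to the fractional-power conventions of the paper, so its $L_1$-norm in $\xi$ is dominated by $\int |\xi|^{|\beta|} e^{\phi_0(\xi)}\,d\xi \le \int |\xi|^n[1+\zeta(\xi)]^{d+3}e^{\phi_0(\xi)}\,d\xi \le K_0$ using \eqref{b}; this yields $\sup_x|\partial^\beta p_0(x)| \le C(d,K_0)$. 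For the weighted $L_1$ bound, multiplication by $x_j$ corresponds to $\partial_{\xi_j}$ on the Fourier side, so $(1+|x|^2)\partial^\beta p_0$ is the inverse transform of a combination of $e^{\psi_0}$, $\partial_\xi(\xi^\beta e^{\psi_0})$ and $\partial_\xi^2(\xi^\beta e^{\psi_0})$; the point is then to see that these are all in $L_1(\mathbf R^d)$ (which gives a sup bound on $(1+|x|^2)\partial^\beta p_0$, hence — since $(1+|x|^2)^{-1}\cdot(1+|x|^2)^2$ type manipulation, or rather controlling up to the $d+1$ st derivative — an $L_1$ bound, or more directly one applies the argument to $(1+|x|^2)^{(d+1)/2}$-type weights; in the paper's setup two $\xi$-derivatives and the $[1+\zeta]^{d+3}$ factor are exactly what is engineered to make this work).

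**The main point: estimating $\xi$-derivatives of $e^{\psi_0}$.** The technical heart is bounding $|\partial_{\xi_j}\psi_0(\xi)|$ and $|\partial_{\xi_j}\partial_{\xi_k}\psi_0(\xi)|$ in terms of $\zeta(\xi)$ and $K_0$ and the moment bound on $\mu^0$, so that $|\partial_\xi^m(e^{\psi_0})| \le P_m(|\partial\psi_0|,|\partial^2\psi_0|)e^{\phi_0}$ is controlled by $[1+\zeta(\xi)]^{m}e^{\phi_0(\xi)}$ for $m\le 2$ (whence the exponent $d+3 \ge 2 + (d+1)$ in \eqref{b} suffices). Differentiating $\psi_0$ under the integral sign gives $\partial_{\xi_j}\psi_0(\xi) = \int_{|y|\le 1}(-i2\pi y_j)[e^{-i2\pi\xi\cdot y} - \chi_\sigma(y)]\mu^0(dy)$ and a similar second-derivative expression with integrand $\asymp |y|^2 e^{-i2\pi\xi\cdot y}$; the subtlety is only in the regime $\sigma\in[1,2)$ where the compensator $\chi_\sigma(y)=1$ is present, and there one uses $|e^{-i2\pi\xi\cdot y} - 1| \le (2\pi|\xi||y|)\wedge 2$ so that $|y|\,|e^{-i2\pi\xi\cdot y}-1| \le |y|[(|\xi||y|)\wedge 1]\cdot\mathrm{const}$, which is precisely the integrand defining $\zeta(\xi)$. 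For $\sigma\in(0,1)$ there is no compensator and $\int|y|\mu^0 \le K_0$ alone bounds $\partial_\xi\psi_0$, while $\int|y|^2\mu^0 \le K_0$ bounds $\partial_\xi^2\psi_0$; for $\sigma\in[1,2)$ the single integration-by-parts-in-$y$ (the cancellation from $\chi_\sigma$) trades one power of $|y|$ for one power of $[(|\xi||y|)\wedge 1]$, giving $|\partial_\xi\psi_0(\xi)| \le C(1+\zeta(\xi))$ and $|\partial_\xi^2\psi_0(\xi)| \le C(K_0 + \zeta(\xi)) \le C(1+\zeta(\xi))$. Assembling, $|\partial_\xi^m(e^{\psi_0(\xi)})| \le C(1+\zeta(\xi))^{d+3}e^{\phi_0(\xi)}$ for all $m \le d+1$ say (taking products of at most $m$ first/second derivatives, each contributing a factor $1+\zeta$), and integrating in $\xi$ against \eqref{b} closes the estimate. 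I expect this $\zeta$-bookkeeping for the compensated case $\sigma\in[1,2)$ — making sure each $\xi$-derivative of the symbol costs only one factor of $1+\zeta(\xi)$ and no uncontrolled growth in $|\xi|$ — to be the only genuinely delicate step; everything else is Fourier inversion and generalized Minkowski.
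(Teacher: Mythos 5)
Your proposal is correct and follows essentially the same route as the paper: express $p_{0}$ by Fourier inversion, use (\ref{b}) with $n=0$ to get a bounded continuous density, differentiate under the integral sign for $\partial^{\beta}p_{0}$, and obtain the weighted $L_{1}$ bound by trading multiplication by $x_{j}$ for $\xi_{j}$-derivatives of $\xi^{\beta}e^{\psi_{0}(\xi)}$, with the key estimates being that $|\nabla\psi_{0}(\xi)|\leq C(1+\zeta(\xi))$ while all higher $\xi$-derivatives of $\psi_{0}$ are bounded, so that $d+3$ derivatives cost at most a factor $[1+\zeta(\xi)]^{d+3}$. The only very slight looseness in your bookkeeping is the remark that each of the $m$ derivatives contributes a factor $1+\zeta$ whether it is first- or higher-order (in fact only the first-order factors carry $\zeta$), but that over-count is harmless and your final bound and appeal to (\ref{b}) are exactly the paper's.
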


\begin{proof}
By Proposition I.2.5 in \cite{sa}, $\eta $ has a continuos bounded density 
\begin{equation}
p_{0}\left( x\right) =\int e^{-i2\pi x\cdot \xi }\exp \left\{ \psi
_{0}\left( \xi \right) \right\} d\xi  \label{d2}
\end{equation}%
if 
\begin{equation*}
\int \exp \left\{ -\phi _{0}\left( \xi \right) \right\} d\xi <\infty \text{.}
\end{equation*}%
The assumption (\ref{b}) implies that for any multiindex $\left\vert \beta
\right\vert \leq n,$ 
\begin{equation*}
\partial ^{\beta }p_{0}\left( x\right) =\int e^{-i2\pi x\cdot \xi }\left(
-i2\pi \xi \right) ^{\beta }\exp \left\{ \psi _{0}\left( \xi \right)
\right\} d\xi ,x\in \mathbf{R}^{d},
\end{equation*}%
is a bounded continuous function. The function $\left( 1+\left\vert
x\right\vert ^{2}\right) $ $\partial ^{\beta }p_{0}$ is integrable if 
\begin{eqnarray}
&&(-i2\pi x_{j})^{d+1}\left( -i2\pi x_{k}\right) ^{2}\partial ^{\beta
}p_{0}\left( x\right)  \label{d3} \\
&=&\int \partial _{\xi _{j}}^{d+1}\partial _{\xi _{k}}^{2}[e^{-i2\pi x\cdot
\xi }]\left( -i2\pi \xi \right) ^{\beta }\exp \left\{ \psi _{0}\left( \xi
\right) \right\} d\xi  \notag \\
&=&\left( -1\right) ^{d+3}\int e^{-ix\cdot \xi }\partial _{\xi
_{j}}^{d+1}\partial _{\xi _{k}}^{2}[\left( -i2\pi \xi \right) ^{\beta }\exp
\left\{ \psi _{0}\left( \xi \right) \right\} ]d\xi  \notag
\end{eqnarray}%
is bounded for all $j,k$. Since $\partial ^{\mu }\psi _{0}\left( \xi \right) 
$ is bounded for $\left\vert \mu \right\vert \geq 2$ and%
\begin{equation*}
|\nabla \psi _{0}\left( \xi \right) |\leq C\left( 1+\zeta \left( \xi \right)
\right) ,\xi \in \mathbf{R}^{d},
\end{equation*}%
the boundedness of (\ref{d3}) follows from assumption (\ref{b}). Therefore
\thinspace $p_{0}\left( x\right) $ has $n$ bounded continuous derivatives
and for any multiindex $\left\vert \beta \right\vert \leq n,$ 
\begin{equation}
\int \left( 1+\left\vert x\right\vert ^{2}\right) \left\vert \partial
^{\beta }p_{0}\left( x\right) \right\vert dx\leq C  \label{d4}
\end{equation}%
with $C=C\left( d,K_{0}\right) .$
\end{proof}

We will need the following tail estimate.

\begin{lemma}
\label{cor10}Let $\pi \in \mathfrak{A}.$ Assume 
\begin{equation}
\int_{\left\vert z\right\vert \leq 1}\left\vert z\right\vert ^{\alpha
_{1}}\pi (dz)+\int_{\left\vert z\right\vert >1}\left\vert z\right\vert
^{\alpha _{2}}\pi (dz)\leq N,  \label{fa5}
\end{equation}%
where $\alpha _{1},\alpha _{2}\in (0,1]\text{ if }\sigma \in (0,1)\text{; }%
\alpha _{1},\alpha _{2}\in (1,2]\text{ if }\sigma \in (1,2)$; $\alpha
_{1}\in (1,2]$\ and $\alpha _{2}\in \lbrack 0,1)$\ if $\sigma =1$. Let $%
\zeta _{t}$ be the associated Levy process, that is%
\begin{equation*}
\mathbf{E}e^{i2\pi \xi \cdot \zeta _{t}}=\exp \{\psi \left( \xi \right)
t\},t\geq 0,
\end{equation*}%
with%
\begin{equation*}
\psi \left( \xi \right) =\int \left[ e^{i2\pi \xi \cdot y}-1-i2\pi \chi
_{\sigma }\left( y\right) y\cdot \xi \right] d\pi ,\xi \in \mathbf{R}^{d}.
\end{equation*}%
Let $t>0$ and $\mathcal{L}_{t}\left( dy\right) $ be the distribution measure
of $\zeta _{t}$ on $\mathbf{R}^{d}$. Then for each $\delta >0$ there is a
constant $C=C\left( \delta ,N\right) $ such that%
\begin{equation*}
\mathcal{L}_{t}(\{\left\vert y\right\vert >\delta \})\leq Ct.
\end{equation*}
\end{lemma}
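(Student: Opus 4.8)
The natural approach is to dominate the indicator $1_{\{|y|>\delta\}}$ by a smooth function with bounded derivatives and to apply Dynkin's (It\^o's) formula to the L\'evy process $\zeta_t$, whose generator is $L^\pi$. Fix $\delta>0$ and choose $f\in C_b^\infty(\mathbf{R}^d)$ with $0\le f\le 1$, $f(y)=0$ for $|y|\le\delta/2$, and $f(y)=1$ for $|y|\ge\delta$; for instance take $f(y)=\chi(|y|)$ with a smooth $\chi:[0,\infty)\to[0,1]$ that vanishes on $[0,\delta/2]$ and equals $1$ on $[\delta,\infty)$. Then $f$ is smooth (it vanishes near the origin), $f(0)=0$, $1_{\{|y|>\delta\}}\le f$, and $|\nabla f|+|D^2 f|\le C(d,\delta)$.

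The crux is a uniform bound $\|L^\pi f\|_\infty\le C(\delta,N)$, obtained from the moment condition (\ref{fa5}) by splitting the integral defining $L^\pi f(x)$ at $|y|=1$ and treating each range of $\sigma$ separately. On $\{|y|\le 1\}$: when $\sigma\in(0,1)$ the integrand is $f(x+y)-f(x)$, bounded by $\|\nabla f\|_\infty|y|\le\|\nabla f\|_\infty|y|^{\alpha_1}$ since $\alpha_1\le 1$; when $\sigma\in[1,2)$ the integrand is $f(x+y)-f(x)-\nabla f(x)\cdot y$, bounded by $\tfrac12\|D^2 f\|_\infty|y|^2\le\tfrac12\|D^2 f\|_\infty|y|^{\alpha_1}$ since $\alpha_1\le 2$; in either case the contribution is at most $C(d,\delta)\int_{|y|\le 1}|y|^{\alpha_1}\pi(dy)\le C(d,\delta)N$. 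On $\{|y|>1\}$: since $\alpha_2\ge 0$ and $|y|^{\alpha_2}\ge 1$ there, (\ref{fa5}) yields $\pi(\{|y|>1\})\le N$; for $\sigma\in(0,1]$ the integrand is $f(x+y)-f(x)$ (the gradient term drops, as $\chi_\sigma(y)=0$ for $|y|>1$), bounded by $2$, so the contribution is $\le 2N$; for $\sigma\in(1,2)$ the integrand is bounded by $2+\|\nabla f\|_\infty|y|\le 2+\|\nabla f\|_\infty|y|^{\alpha_2}$ since $\alpha_2>1$, so the contribution is $\le C(d,\delta)N$. Adding up gives $\|L^\pi f\|_\infty\le C(\delta,N)$.

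Since $f\in C_b^2(\mathbf{R}^d)$, the process $f(\zeta_t)-f(\zeta_0)-\int_0^t L^\pi f(\zeta_s)\,ds$ is a martingale, so taking expectations and using $\zeta_0=0$, $f(0)=0$ gives
\[
\mathbf{E}\, f(\zeta_t)=\int_0^t\mathbf{E}\,L^\pi f(\zeta_s)\,ds\le t\,\|L^\pi f\|_\infty\le C(\delta,N)\,t .
\]
Because $1_{\{|y|>\delta\}}\le f$, this yields $\mathcal{L}_t(\{|y|>\delta\})=\mathbf{E}\,1_{\{|\zeta_t|>\delta\}}\le\mathbf{E}\, f(\zeta_t)\le C(\delta,N)\,t$, as desired. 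The only delicate step is the bound on $\|L^\pi f\|_\infty$: one must verify, case by case in $\sigma$, that each term of the Taylor remainder of $f$ is controlled near the origin by $|y|^{\alpha_1}$ and near infinity by $|y|^{\alpha_2}$, which are exactly the exponents permitted by (\ref{fa5}); the martingale identity and the final estimate are routine.
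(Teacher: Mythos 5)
Your proof is correct, but it takes a genuinely different route from the paper. The paper splits $\zeta_t=\bar\zeta_t+\tilde\zeta_t$ into the small-jump ($|y|\le1$) and large-jump ($|y|>1$) parts, bounds $\mathbf{E}|\bar\zeta_t|^{\alpha_1}$ and $\mathbf{E}|\tilde\zeta_t|^{\alpha_2}$ separately (using a subadditivity argument when the exponent is $\le1$ and an It\^{o}-isometry/first-moment argument otherwise), and then invokes Markov's inequality. Your argument instead chooses a smooth cutoff $f$ vanishing near the origin and equal to $1$ outside $B_\delta$, verifies the uniform bound $\Vert L^\pi f\Vert_\infty\le C(\delta,N)$ by the same case analysis on $\sigma$ applied directly to the integral defining $L^\pi f$, and then uses Dynkin's formula for $f\in C_b^2$ to read off $\mathbf{E}f(\zeta_t)\le t\Vert L^\pi f\Vert_\infty$. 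Each step of yours checks out: the Taylor-remainder estimates give $|y|^{\alpha_1}$ domination on $\{|y|\le1\}$ and $|y|^{\alpha_2}$ domination (together with the trivial $O(1)$ bound) on $\{|y|>1\}$, exactly matching (\ref{fa5}), and the martingale property holds for $C_b^2$ functions. The paper's route has the side benefit of producing explicit moment bounds on the two pieces (useful as stand-alone facts), while your route is shorter and avoids the explicit process decomposition, going directly from the moment hypothesis on $\pi$ to a linear-in-$t$ tail bound via the generator.
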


\begin{proof}
Recall 
\begin{equation}
\zeta _{t}=\int_{0}^{t}\int \chi _{\sigma }(y)yq(ds,dy)+\int_{0}^{t}\int
(1-\chi _{\sigma }(y))yp(ds,dy),t\geq 0,  \label{fa3}
\end{equation}%
$p(ds,dy)$ is Poisson point measure with 
\begin{equation*}
\mathbf{E}p\left( ds,dy\right) =\pi \left( dy\right) ds,q\left( ds,dy\right)
=p\left( ds,dy\right) -\pi \left( dy\right) ds.
\end{equation*}%
Now, $\zeta _{t}=\bar{\zeta}_{t}+\tilde{\zeta}_{t}$ with%
\begin{eqnarray*}
\bar{\zeta}_{t} &=&\int_{0}^{t}\int_{\left\vert y\right\vert \leq 1}\chi
_{\sigma }(y)yq(ds,dy)+\int_{0}^{t}\int_{\left\vert y\right\vert \leq
1}(1-\chi _{\sigma }(y))yp(ds,dy), \\
\tilde{\zeta}_{t} &=&\int_{0}^{t}\int_{\left\vert y\right\vert >1}\chi
_{\sigma }(y)yq(ds,dy)+\int_{0}^{t}\int_{\left\vert y\right\vert >1}(1-\chi
_{\sigma }(y))yp(ds,dy),
\end{eqnarray*}

$t\geq 0$.

\emph{Case 1: }$\sigma \in \left( 0,1\right) $. In this case (\ref{fa5})
holds with $\alpha _{1},\alpha _{2}\in (0,1]$. Then 
\begin{equation*}
\left\vert \bar{\zeta}_{t}\right\vert ^{\alpha _{1}}=\sum_{s\leq t}\left[
\left\vert \bar{\zeta}_{s-}+\Delta \bar{\zeta}_{s}\right\vert ^{\alpha
_{1}}-\left\vert \bar{\zeta}_{s-}\right\vert ^{\alpha _{1}}\right] \leq
\sum_{s\leq t}\left\vert \Delta \bar{\zeta}_{s}\right\vert ^{\alpha _{1}},
\end{equation*}%
and%
\begin{equation*}
\mathbf{E}\left\vert \bar{\zeta}_{t}\right\vert ^{\alpha _{1}}\leq
t\int_{\left\vert y\right\vert \leq 1}\left\vert y\right\vert ^{\alpha
_{1}}\pi \left( dy\right) \leq Nt.
\end{equation*}%
Similarly, $\mathbf{E}\left\vert \tilde{\zeta}_{t}\right\vert ^{\alpha
_{2}}\leq Nt.$

\emph{Case 2: }$\sigma \in \left( 1,2\right) $.\ In this case, $\alpha
_{1},\alpha _{2}\in (1,2]$. Then%
\begin{equation*}
\mathbf{E[}\bar{\zeta}_{t}^{2}]=\int_{\left\vert y\right\vert \leq
1}\left\vert y\right\vert ^{2}\pi \left( dy\right) t\leq Nt,
\end{equation*}%
and%
\begin{equation*}
\mathbf{E[}\tilde{\zeta}_{t}]\leq 2t\int_{\left\vert y\right\vert
>1}\left\vert y\right\vert ^{\alpha _{2}}\pi \left( dy\right) \leq Nt
\end{equation*}

\emph{Case 3: }$\sigma =1$. In this case, $\alpha _{1}\in (1,2]$\emph{\ and }%
$\alpha _{2}\in \lbrack 0,1)$. Similarly as above, we find that%
\begin{eqnarray*}
\mathbf{E[}\bar{\zeta}_{t}^{2}] &=&t\int_{\left\vert y\right\vert \leq
1}\left\vert y\right\vert ^{2}\pi \left( dy\right) \leq Nt, \\
\mathbf{E[}\tilde{\zeta}_{t}^{\alpha _{2}}] &\leq &Nt.
\end{eqnarray*}%
The statement is proved.
\end{proof}

Let $\pi \in \mathfrak{A}^{\sigma }$ and $p\left( dt,dy\right) $ be a
Poisson point measure on $[0,\infty )\times \mathbf{R}_{0}^{d}$ such that $%
\mathbf{E}p\left( dt,dy\right) =\pi \left( dy\right) dt.$ Let $%
q(dt,dy)=p\left( dt,dy\right) -\pi \left( dy\right) dt$. We associate to $%
L^{\pi }$ the stochastic process with independent increments 
\begin{equation}
Z_{t}=Z_{t}^{\pi }=\int_{0}^{t}\int \chi _{\sigma
}(y)yq(ds,dy)+\int_{0}^{t}\int (1-\chi _{\sigma }(y))yp(ds,dy),t\geq 0.
\label{f10}
\end{equation}%
By Ito formula, 
\begin{equation}
\mathbf{E}e^{i2\pi \xi \cdot Z_{t}^{\pi }}=\exp \left\{ \psi ^{\pi }\left(
\xi \right) t\right\} ,t\geq 0,\xi \in \mathbf{R}^{d},  \label{f12}
\end{equation}%
where%
\begin{equation*}
\psi ^{\pi }(\xi ):=\int \left[ \exp (i2\pi \xi \cdot y)-1-i2\pi y\cdot \xi
\chi _{\sigma }\left( y\right) \right] \pi (dy).
\end{equation*}

Let $\kappa \left( R\right) ,R>0,$ be a scaling function, $Z_{t}=Z_{t}^{R}$
be the stochastic process with independent increments associated with $%
\tilde{\pi}_{R}=\kappa \left( R\right) \pi _{R}$, i.e., 
\begin{equation*}
\mathbf{E}e^{i2\pi \xi \cdot Z_{t}^{R}}=\exp \left\{ \psi ^{\tilde{\pi}%
_{R}}\left( \xi \right) t\right\}
\end{equation*}%
with 
\begin{equation*}
\psi ^{\tilde{\pi}_{R}}\left( \xi \right) =\int \left[ e^{i2\pi \xi \cdot
y}-1-i2\pi \chi _{\sigma }\left( y\right) y\cdot \xi \right] d\tilde{\pi}%
_{R},\xi \in \mathbf{R}^{d}.
\end{equation*}%
Note $Z_{t}^{R}$ and $R^{-1}Z_{\kappa \left( R\right) t}^{\pi },t>0,$ have
the same distribution.

\begin{lemma}
\label{led}Let $\pi \in \mathfrak{A}^{\sigma },\kappa $ be a scaling
function with scaling factor $l$. Assume 
\begin{equation*}
\tilde{\pi}_{R}\left( dy\right) =\kappa \left( R\right) \pi _{R}\left(
dy\right) \geq 1_{\left\{ \left\vert y\right\vert \leq 1\right\} }\mu
^{0}\left( dy\right)
\end{equation*}%
with $\mu ^{0}$ satisfying the assumptions of Lemma \ref{l1} (in particular,
(\ref{b}) with $n\geq 0$ and the constant $K_{0}$), and Let 
\begin{equation*}
\psi _{0}\left( \xi \right) =\int \left[ e^{-2\pi \xi \cdot y}-1-\chi
_{\sigma }\left( y\right) \xi \cdot y\right] \mu ^{0}\left( dy\right) ,\xi
\in \mathbf{R}^{d}.
\end{equation*}

a) For each $t>0,R>0,$ we have $Z_{t}^{R}=\eta _{t}+\eta _{t}^{\prime }$ (in
distribution)$,$ $\eta _{t}$ and $\tilde{\eta}_{t}$ are independent with%
\begin{equation}
\mathbf{E}e^{i2\pi \xi \cdot \eta _{t}}=\exp \{\psi _{0}\left( \xi \gamma
\left( t\right) \right) \},\xi \in \mathbf{R}^{d},  \label{a}
\end{equation}%
and $\mu _{\gamma \left( t\right) ^{-1}}^{0}\leq t\tilde{\pi}_{R}$, where $%
\gamma \left( t\right) =l^{-1}\left( t\right) =\inf \left( s:l\left(
s\right) \geq t\right) .$ Moreover, $\eta _{t}=\gamma \left( t\right) \eta $
(in distribution), where $\eta $ is a r.v. in Lemma \ref{l1}.

b)For every $t>0,R>0,\,$the process $Z_{t}^{R}$ (equivalently $%
R^{-1}Z_{\kappa \left( R\right) t}^{\pi }$) has a bounded continuous
probability density function 
\begin{equation*}
p^{R}\left( t,x\right) =\gamma \left( t\right) ^{-d}\int p_{0}\left( \frac{%
x-y}{\gamma \left( t\right) }\right) P_{t,R}\left( dy\right) ,x\in \mathbf{R}%
^{d},
\end{equation*}%
where $P_{t,R}\left( dy\right) $ is the distribution measure of $\eta
_{t}^{\prime }$ on $\mathbf{R}^{d}$ and $p_{0}$ is pdf of $\eta $. Moreover, 
$p^{R}\left( t,x\right) $ has $n$ bounded continuous derivatives such that
for any multiindex $\left\vert \beta \right\vert \leq n,$ 
\begin{eqnarray*}
\int \left\vert \partial ^{\beta }p^{R}\left( t,x\right) \right\vert dx
&\leq &\gamma (t)^{-\left\vert \beta \right\vert }\int \left\vert \partial
^{\beta }p_{0}\left( x\right) \right\vert dx, \\
\sup_{x\in \mathbf{R}^{d}}\left\vert \partial ^{\beta }p^{R}\left(
t,x\right) \right\vert &\leq &\gamma \left( t\right) ^{-d-\left\vert \beta
\right\vert }\sup_{x}\left\vert \partial ^{\beta }p_{0}\left( x\right)
\right\vert ,
\end{eqnarray*}%
and for any $\alpha \in \left( 0,1\right) $ such that $\left\vert \beta
\right\vert +\alpha <n$%
\begin{equation}
\int \left\vert \partial ^{\alpha }\partial ^{\beta }p^{R}\left( t,x\right)
\right\vert dx\leq \gamma (t)^{-\left\vert \beta \right\vert -\alpha }\int
\left\vert \partial ^{\alpha }\partial ^{\beta }p_{0}\left( x\right)
\right\vert dx.  \label{d11}
\end{equation}

c) Assume, in addition, that there exist $\alpha _{1}$\ and $\alpha _{2}$\
and a constant $N>0$ such that%
\begin{equation*}
\int_{\left\vert z\right\vert \leq 1}\left\vert z\right\vert ^{\alpha _{1}}%
\tilde{\pi}_{R}(dz)+\int_{\left\vert z\right\vert >1}\left\vert z\right\vert
^{\alpha _{2}}\tilde{\pi}_{R}(dz)\leq N\text{ }\forall R>0,
\end{equation*}%
where $\alpha _{1},\alpha _{2}\in (0,1]\text{ if }\sigma \in (0,1)\text{; }%
\alpha _{1},\alpha _{2}\in (1,2]\text{ if }\sigma \in (1,2)$; $\alpha
_{1}\in (1,2]$\ and $\alpha _{2}\in \lbrack 0,1)$\ if $\sigma =1$. Then for
each $a>0$ there is $C=C\left( d,a,N,K_{0},n\right) $ such that for any
multiindex $\left\vert \beta \right\vert \leq n,R>0,t>0,$%
\begin{equation*}
\int_{\left\vert x\right\vert >a}\left\vert \partial ^{\beta }p^{R}\left(
t,x\right) \right\vert dx\leq C\left( \gamma \left( t\right) ^{2-\left\vert
\beta \right\vert }+t\gamma \left( t\right) ^{-\left\vert \beta \right\vert
}\right) .
\end{equation*}
\end{lemma}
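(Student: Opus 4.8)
\textbf{Proof plan for Lemma \ref{led}, part c).}

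The plan is to exploit the convolution structure of $p^{R}(t,x)$ established in part b), namely $p^{R}(t,x)=\gamma(t)^{-d}\int p_{0}\bigl((x-y)/\gamma(t)\bigr)P_{t,R}(dy)$, where $P_{t,R}$ is the law of the ``remainder'' process $\eta_{t}'$ and $p_{0}$ is the (rapidly decaying) pdf from Lemma \ref{l1}. Differentiating under the integral sign gives $\partial^{\beta}p^{R}(t,x)=\gamma(t)^{-d-|\beta|}\int(\partial^{\beta}p_{0})\bigl((x-y)/\gamma(t)\bigr)P_{t,R}(dy)$. I would then split the region $\{|x|>a\}$ according to whether the convolution variable $y$ is close to $0$ or far: on $\{|y|\le a/2\}$ we have $|x-y|>a/2$, so the inner factor is controlled by the tail of $\partial^{\beta}p_{0}$ over $\{|z|>a/(2\gamma(t))\}$; on $\{|y|>a/2\}$ we pay the price of $P_{t,R}(\{|y|>a/2\})$, for which Lemma \ref{cor10} (applied to the scaled measure $\tilde{\pi}_{R}$, legitimate because of the hypothesis in part c)) gives a bound $\le Ct$.

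More precisely, first I would write, using Fubini,
\begin{eqnarray*}
\int_{|x|>a}\bigl|\partial^{\beta}p^{R}(t,x)\bigr|dx
&\le&\gamma(t)^{-|\beta|}\int P_{t,R}(dy)\int_{|x|>a}\gamma(t)^{-d}\bigl|(\partial^{\beta}p_{0})\bigl((x-y)/\gamma(t)\bigr)\bigr|dx\\
&=&\gamma(t)^{-|\beta|}\int P_{t,R}(dy)\int_{|\gamma(t)z+y|>a}\bigl|\partial^{\beta}p_{0}(z)\bigr|dz.
\end{eqnarray*}
On the event $\{|y|\le a/2\}$, the inner integral is over $\{|z|>a/(2\gamma(t))\}$; since $\int(1+|x|^{2})|\partial^{\beta}p_{0}(x)|dx\le C$ by Lemma \ref{l1}, Chebyshev gives $\int_{|z|>a/(2\gamma(t))}|\partial^{\beta}p_{0}(z)|dz\le C\gamma(t)^{2}/a^{2}$. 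On the complementary event the inner integral is at most $\int|\partial^{\beta}p_{0}(z)|dz\le C$, and $P_{t,R}(\{|y|>a/2\})\le Ct$ by Lemma \ref{cor10}. (Here one checks that $\eta_{t}'$, being the ``large-jump'' part of $Z_{t}^{R}$ split off in part b), has a law of the form of $\mathcal{L}_{t}$ for a measure dominated by $\tilde{\pi}_{R}$, which satisfies \eqref{fa5} uniformly in $R$ with the given $\alpha_{1},\alpha_{2}$.) Combining the two pieces yields $\int_{|x|>a}|\partial^{\beta}p^{R}(t,x)|dx\le C\gamma(t)^{-|\beta|}(\gamma(t)^{2}+t)$, which is the claim.

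The main obstacle I anticipate is the bookkeeping around the decomposition $Z_{t}^{R}=\eta_{t}+\eta_{t}'$ from part b): one must verify that the distribution $P_{t,R}$ of $\eta_{t}'$ genuinely falls under the hypotheses of Lemma \ref{cor10}, i.e. that $\eta_{t}'$ is the value at time $t$ of a L\'{e}vy process whose L\'{e}vy measure is dominated by $\tilde{\pi}_{R}$ and hence obeys the moment bound \eqref{fa5} with constants independent of $R$. Once that identification is clean, the rest is the elementary split-and-estimate above; the scaling powers of $\gamma(t)$ simply track the homogeneity recorded in part b), and the case distinction $\sigma\in(0,1)$ versus $\sigma\in[1,2)$ is already absorbed into Lemma \ref{cor10}, so it need not be repeated here.
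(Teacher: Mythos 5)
For part c), your plan is correct and essentially identical to the paper's own argument: the same split of the convolution over $\{|y|\le a/2\}$ versus $\{|y|>a/2\}$, the same use of the second-moment tail bound $\int(1+|x|^{2})|\partial^{\beta}p_{0}(x)|\,dx\le C$ from Lemma \ref{l1} on the near piece, and the same appeal to Lemma \ref{cor10} to obtain $P_{t,R}(\{|y|>a/2\})\le Ct$ on the far piece, yielding $C(\gamma(t)^{2-|\beta|}+t\gamma(t)^{-|\beta|})$. You also correctly flag, and sketch the resolution of, the one point the paper states without comment: $\eta_{t}'$ is the time-$t$ marginal of a L\'{e}vy process whose L\'{e}vy measure $\Pi_{t}/t=\tilde{\pi}_{R}-\mu^{0}_{\gamma(t)^{-1}}/t$ is dominated by $\tilde{\pi}_{R}$, hence inherits the moment bound \eqref{fa5} with the same constant $N$, so Lemma \ref{cor10} applies uniformly in $R$ and $t$. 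Note that you address only part c); parts a) and b), which you rely on, are not argued.
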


\begin{proof}
a) Let $R>0,t>0$. Since $l\left( l^{-1}\left( t\right) \right) =t$, we have $%
\kappa (R)t\geq \kappa \left( Rl^{-1}\left( t\right) \right) =\kappa \left(
R\gamma \left( t\right) \right) $. Hence 
\begin{equation}
\tilde{\pi}_{R}t\geq \kappa \left( R\gamma \left( t\right) \right) \pi
_{R\gamma \left( t\right) /\gamma \left( t\right) }\geq \mu _{\gamma \left(
t\right) ^{-1}}^{0},  \label{d50}
\end{equation}%
and $\mu _{\gamma \left( t\right) ^{-1}}^{0}\left( dy\right) =\mu ^{0}\left(
\gamma \left( t\right) ^{-1}dy\right) $ is the Levy measure of a random
variable, denoted $\eta _{t},$ such that (\ref{a}) holds. Let 
\begin{equation*}
\psi ^{t\tilde{\pi}_{R}}\left( \xi \right) =\psi _{0}\left( \xi \gamma
\left( t\right) \right) +\psi ^{\prime }\left( \xi \right) ,\xi \in \mathbf{R%
}^{d}.
\end{equation*}%
The inequality (\ref{d50}) implies that (see e.g. \cite{fa}) $\psi ^{\prime
}=\psi ^{\Pi _{t}}$ with $\Pi _{t}=\tilde{\pi}_{R}t-\mu _{\gamma \left(
t\right) ^{-1}}^{0}$ and\ $\exp \{\psi ^{\prime }\left( \xi \right) \}$ is
characteristic function of a random variable $\eta _{t}^{\prime }$
independent of $\eta _{t}$. Obviously the distribution of $Z_{t}^{R}$
coincides with the distribution of the sum $\eta _{t}+\eta _{t}^{\prime }$.
If $\eta $ is a r.v. with characteristic function (\ref{a0}), then $\eta
_{t}=\gamma \left( t\right) \eta $ in distribution.

b) First we prove the existence of the probability density function of $\eta 
$ whose characteristic function is $\exp \left\{ \psi _{0}\left( \xi \right)
\right\} $. Note that $\phi _{0}\left( \xi \right) =\func{Re}\psi _{0}\left(
\xi \right) ,\xi \in \mathbf{R}^{d}.$ Let $t>0$. By part a), $%
Z_{t}^{R}=\gamma \left( t\right) \eta +\eta _{t}^{\prime }$ (in distribution)%
$,$ $\eta $ and $\eta _{t}^{\prime }$ are independent. The pdf of $\eta
\gamma \left( t\right) $ is 
\begin{equation}
p_{0}\left( t,x\right) =\gamma \left( t\right) ^{-d}p_{0}\left( x/\gamma
\left( t\right) \right) ,x\in \mathbf{R}^{d}.  \label{d70}
\end{equation}%
Let $P_{t,R}\left( dy\right) $ be the distribution measure of $\eta
_{t}^{\prime }$ on $\mathbf{R}^{d}$. Since $\eta \gamma \left( t\right) $
and $\eta _{t}^{\prime }$ are independent, $Z_{t}^{R}$ has a density 
\begin{equation*}
p^{R}\left( t,x\right) =\int p_{0}\left( t,x-y\right) P_{t,R}\left(
dy\right) ,x\in \mathbf{R}^{d}.
\end{equation*}%
According to (\ref{d70}) (see (\ref{d4}) as well) , for any $\left\vert
\beta \right\vert \leq n,$ 
\begin{equation*}
\partial ^{\beta }p^{R}\left( t,x\right) =\int \partial ^{\beta }p_{0}\left(
t,x-y\right) P_{t,R}\left( dy\right) ,
\end{equation*}%
and, according to Corollary \ref{cor1}(ii), 
\begin{eqnarray}
\sup_{x,R}\left\vert \partial ^{\beta }p^{R}\left( t,x\right) \right\vert
&\leq &\gamma \left( t\right) ^{-d-\left\vert \beta \right\vert
}\sup_{x}\left\vert \partial ^{\beta }p_{0}\left( x\right) \right\vert
<\infty ,  \label{d5} \\
\int \left\vert \partial ^{\beta }p^{R}\left( t,x\right) \right\vert dx
&\leq &\gamma \left( t\right) ^{-\left\vert \beta \right\vert }\int
\left\vert \partial ^{\beta }p_{0}\left( x\right) \right\vert dx<\infty . 
\notag
\end{eqnarray}%
Similarly, see Corollary \ref{cor1}, (\ref{d11}) follows.

c) Let $a>0,\left\vert \beta \right\vert \leq n$.

Then 
\begin{eqnarray*}
\int_{\left\vert x\right\vert >a}\left\vert \partial ^{\beta }p^{R}\left(
t,x\right) \right\vert dx &=&\gamma \left( t\right) ^{-d-\left\vert \beta
\right\vert }\int_{\left\vert x\right\vert >a}\left\vert \int (\partial
^{\beta }p_{0})\left( \frac{x-y}{\gamma \left( t\right) }\right)
P_{t,R}\left( dy\right) \right\vert dx \\
&\leq &\int \int_{\left\vert x-y\right\vert >a/2}...+\int \int_{\left\vert
y\right\vert >a/2}... \\
&\leq &C[\gamma \left( t\right) ^{2-\left\vert \beta \right\vert }\int
\left\vert x\right\vert ^{2}\left\vert \partial ^{\beta }p_{0}\left(
x\right) \right\vert dx+t\gamma \left( t\right) ^{-\left\vert \beta
\right\vert }\left\vert \partial ^{\beta }p_{0}\right\vert _{L_{1}}],
\end{eqnarray*}%
because by Lemma \ref{cor10} there is $C=C(N,a)$ such that%
\begin{equation*}
P_{t,R}\left( \left\vert y\right\vert >a/2\right) \leq Ct.
\end{equation*}
\end{proof}

We will need some estimates involving the operators $L^{\pi }$.

\begin{lemma}
\label{ld1}Let $\pi _{0}\in \mathfrak{A}^{\sigma },\kappa $ be a scaling
function with scaling factor $l,$ and \textbf{D}$\left( \kappa ,l\right) $%
(i)-(ii) hold for $\pi _{0}$. Let $\pi \in \mathfrak{A}^{\sigma }$ be such
that%
\begin{equation*}
\int_{\left\vert z\right\vert \leq 1}\left\vert z\right\vert ^{\alpha _{1}}%
\tilde{\pi}_{R}(dz)+\int_{\left\vert z\right\vert >1}\left\vert z\right\vert
^{\alpha _{2}}\tilde{\pi}_{R}(dz)\leq N\text{ }\forall R>0,
\end{equation*}%
for some $N_{2}>0$, \emph{where }$\tilde{\pi}_{R}\left( dy\right) =\kappa
\left( R\right) \pi _{R}\left( dy\right) $, and $\alpha _{1},\alpha _{2}$
are exponents in assumption \textbf{D}$\left( \kappa ,l\right) $ for $\pi
_{0}$. Let $R>0$ and $p^{R}\left( t,x\right) ,x\in \mathbf{R}^{d},$ be pdf
of $R^{-1}Z_{\kappa \left( R\right) t}^{\pi _{0}}$ (see Lemma \ref{led}), $%
\gamma \left( t\right) =l^{-1}\left( t\right) ,t>0$, and $L^{0,R}$ be the
operator corresponding to Levy measure $\kappa \left( R\right) \pi
_{0}\left( Rdy\right) .$ Then there is $C=C\left( d,N_{0},N\right) $ such
that

a) 
\begin{eqnarray}
&&\int_{\left\vert x\right\vert >2}\left\vert L^{\tilde{\pi}_{R}}p^{R}\left(
t,x\right) \right\vert dx  \label{fd1} \\
&\leq &C\left[ 1+1_{\sigma \in \left( 1,2\right) }\gamma \left( t\right)
^{-1}+\gamma \left( t\right) ^{-\alpha _{1}}\left( \gamma \left( t\right)
^{2}+t\right) \right] ,  \notag
\end{eqnarray}%
for all $t>0;$

b) denoting $I_{2}=\left\{ t>0:\gamma \left( t\right) >1\right\} $,%
\begin{equation}
\left\vert L^{\tilde{\pi}_{R}}\nabla p^{R}\left( t,\cdot \right) \right\vert
_{L_{1}}\leq C\gamma \left( t\right) ^{-(1+\alpha _{2})},t\in I_{2}
\label{fd20}
\end{equation}

c)%
\begin{equation}
\left\vert L^{\tilde{\pi}_{R}}L^{0,R}p^{R}\left( t,\cdot \right) \right\vert
_{L_{1}}\leq C\gamma \left( t\right) ^{-2\alpha _{2}},t\in I_{2}.
\label{fd30}
\end{equation}
\end{lemma}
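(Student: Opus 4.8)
The plan is to deduce all three bounds from the $L_1$ smoothing estimates for $p^R(t,\cdot )$ in Lemma \ref{led} by re-expressing the action of $L^{\tilde{\pi}_R}$ (and, in (c), of the extra $L^{0,R}$) in terms of ordinary and fractional derivatives, using the increment estimates of Corollaries \ref{cor1} and \ref{cor2}, Lemma \ref{le3} and Corollary \ref{c2}. Those are stated for $v\in \mathcal{S}(\mathbf{R}^d)$, but by Lemma \ref{led}(b) the function $p^R(t,\cdot )$ and its derivatives up to order $n$ lie in the Sobolev spaces $W^{k,1}(\mathbf{R}^d)$ in which the right-hand sides of those estimates are continuous, so they extend to $p^R(t,\cdot )$ by density; here $n=2$ for $\sigma \in (0,1)$ and $n=4$ for $\sigma \in [1,2)$, which is exactly what Assumption $\mathbf{A}_0$ furnishes through Lemma \ref{l1}. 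Throughout I use that $L^{0,R}$, being a Fourier multiplier, commutes with $\partial ^\gamma $ and $\partial ^\alpha $; that its Lévy measure $\kappa (R)\pi _0(R\,dy)$ obeys the moment hypothesis of Corollary \ref{cor2} by Assumption $\mathbf{D}(\kappa ,l)$(ii) for $\pi _0$; and that on $I_2$ one has $\gamma (t)>1$, so $\gamma (t)^{-s}$ decreases in $s$ and the highest-order terms are always absorbed by lower, fractional-order ones.

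For (a), split $L^{\tilde{\pi}_R}p^R(t,x)=\int _{|y|\leq 1}[\cdots ]+\int _{|y|>1}[\cdots ]$ and integrate over $\{|x|>2\}$. On $\{|y|\leq 1\}$ the integrand is a first- or second-order increment of $p^R(t,\cdot )$ according as $\chi _\sigma $ vanishes on it; applying Lemma \ref{le3} (for $\sigma \in (0,1)$) or Corollary \ref{c2} (for $\sigma \in [1,2)$) with $a=2$ and interpolation exponent $\alpha _1$ (resp.\ $\alpha _1-1$), the tail norms $\int _{|x|\geq 1}|\partial ^\gamma p^R(t,x)|\,dx$ that appear are $\leq C\gamma (t)^{-|\gamma |}(\gamma (t)^2+t)$ by Lemma \ref{led}(c), and the exponents collapse to give the bound $C\gamma (t)^{-\alpha _1}(\gamma (t)^2+t)\,|y|^{\alpha _1}$; integrating against $\int _{|y|\leq 1}|y|^{\alpha _1}\tilde{\pi}_R(dy)\leq N$ yields the last summand of (a). On $\{|y|>1\}$: if $\sigma \leq 1$ then $\chi _\sigma =0$ there, so $\int _{|x|>2}|p^R(t,x+y)-p^R(t,x)|\,dx\leq 2$, and $\int _{|y|>1}\tilde{\pi}_R(dy)\leq N$ gives the constant ``$1$''; if $\sigma \in (1,2)$ one keeps $-\,y\cdot \nabla p^R$, bounds the integrand by $2+C|y|\gamma (t)^{-1}$ (using $|\nabla p^R(t,\cdot )|_{L_1}\leq C\gamma (t)^{-1}$ from Lemma \ref{led}(b)), and integrates against $\int _{|y|>1}(1+|y|)\tilde{\pi}_R(dy)\leq CN$ (since $|y|\leq |y|^{\alpha _2}$ for $|y|>1$, $\alpha _2>1$), producing the $1_{\sigma \in (1,2)}\gamma (t)^{-1}$ term. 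Summing completes (a).

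For (b) and (c) no $x$-truncation is needed; the engine is the following observation. If $\pi '\in \mathfrak{A}^\sigma $ satisfies the moment hypothesis of Corollary \ref{cor2} and $v$ is one of $\partial ^\beta p^R(t,\cdot )$, $\partial ^{\alpha _2}p^R(t,\cdot )$, $\partial ^{\alpha _2-1}\nabla p^R(t,\cdot )$ --- of ``scaling order'' $s$ equal to $|\beta |$, $\alpha _2$, $\alpha _2$ respectively, in the sense $|\partial ^j v|_{L_1}\leq C\gamma (t)^{-s-j}$ by Lemma \ref{led}(b) --- then
\begin{equation*}
|L^{\pi '}v|_{L_1}\leq C\gamma (t)^{-s-\alpha _2}\qquad \text{for }t\in I_2 .
\end{equation*}
This follows by applying to $v$ the second (``$\alpha _2$-branch'') inequality of Corollary \ref{cor2} in the relevant $\sigma $-range: its first term, $|\nabla v|_{L_1}$ or $|D^2 v|_{L_1}$, is $\leq C\gamma (t)^{-s-1}$ or $C\gamma (t)^{-s-2}$, hence $\leq C\gamma (t)^{-s-\alpha _2}$ on $I_2$ (as $\alpha _2\leq 1$ whenever that term has order $s+1$, and $\alpha _2\leq 2$ always); its second term, $|\partial ^{\alpha _2}v|_{L_1}$ or $|\partial ^{\alpha _2-1}\nabla v|_{L_1}$, is $\leq C\gamma (t)^{-s-\alpha _2}$ by Lemma \ref{led}(b), after rewriting a fractional order in $[1,2)$ as $\partial ^{\cdot -1}$ composed with one ordinary derivative so that only $\partial ^\alpha $ with $\alpha \in (0,1)$ and integer derivatives of order $\leq n$ enter. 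Now (b) is this observation with $\pi '=\tilde{\pi}_R$ and $v=\nabla p^R(t,\cdot )$ (so $s=1$). For (c), apply Corollary \ref{cor2} once to $L^{\tilde{\pi}_R}$ with $v=L^{0,R}p^R(t,\cdot )$ and commute the derivatives through the multiplier $L^{0,R}$: the two resulting terms are $|L^{0,R}\nabla p^R(t,\cdot )|_{L_1}$ or $|L^{0,R}D^2 p^R(t,\cdot )|_{L_1}$, and $|L^{0,R}\partial ^{\alpha _2}p^R(t,\cdot )|_{L_1}$ or $|L^{0,R}\partial ^{\alpha _2-1}\nabla p^R(t,\cdot )|_{L_1}$, to which the observation (now with $\pi '=\kappa (R)\pi _0(R\,dy)$) applies and gives $\leq C\gamma (t)^{-1-\alpha _2}$ or $\leq C\gamma (t)^{-2-\alpha _2}$, and $\leq C\gamma (t)^{-2\alpha _2}$, respectively; on $I_2$ each of these is $\leq C\gamma (t)^{-2\alpha _2}$, which is (c).

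The only genuinely delicate point is the exponent bookkeeping in (a): the interpolation exponents in Lemma \ref{le3}/Corollary \ref{c2} must be tuned so that, after the tail bounds $C\gamma (t)^{-|\gamma |}(\gamma (t)^2+t)$ are inserted, the powers of $\gamma (t)$ telescope to exactly $\gamma (t)^{-\alpha _1}$ with a single factor $(\gamma (t)^2+t)$ surviving, and this has to be checked separately in the three ranges of $\sigma $, each with its own placement of $\chi _\sigma $ on $\{|y|\leq 1\}$ and $\{|y|>1\}$. Parts (b) and (c) are then routine once one commits to the ``$\alpha _2$-branch'' of Corollary \ref{cor2} and uses $\gamma (t)>1$ on $I_2$; the minor care is to reduce fractional derivatives of order $\geq 1$ to admissible ones and to check that the orders one needs never exceed $n$.
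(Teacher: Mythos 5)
Your proof is correct and follows essentially the same route as the paper: part (a) via the split $\{|z|\le 1\}/\{|z|>1\}$, Lemma~\ref{le3}/Corollary~\ref{c2} interpolation with exponents $\alpha_1$ (resp.\ $\alpha_1-1$), and the tail bounds of Lemma~\ref{led}(c); parts (b) and (c) via the ``$\alpha_2$-branch'' of Corollary~\ref{cor2} and Lemma~\ref{led}(b), using $\gamma(t)>1$ on $I_2$ to absorb the higher-order integer-derivative terms. Your unified ``scaling order $s$'' observation is just a tidier packaging of the paper's explicit case-by-case applications of Corollary~\ref{cor2}, and your remark about extending the $\mathcal{S}(\mathbf{R}^d)$ estimates to $p^R(t,\cdot)$ by density and checking that the required derivative orders never exceed $n$ makes explicit a point the paper leaves implicit.
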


\begin{proof}
a) For any $t>0,$%
\begin{equation*}
\int_{\left\vert x\right\vert >2}\left\vert L^{\tilde{\pi}_{R}}p^{R}\left(
t,x\right) \right\vert dx\leq \int \int_{\left\vert x\right\vert
>2}\left\vert \nabla _{z}^{\sigma }p^{R}\left( t,x\right) \right\vert dx\pi
\left( dz\right) .
\end{equation*}%
Now, for $\left\vert z\right\vert \leq 1,$ by Lemmas \ref{le3} and \ref{led}%
, 
\begin{eqnarray*}
&&\int_{\left\vert x\right\vert >2}\left\vert \nabla _{z}^{\sigma
}p^{R}\left( t,x\right) \right\vert dx \\
&\leq &C\left\vert z\right\vert ^{\alpha _{1}}\left( \int_{\left\vert
x\right\vert >1}\left\vert p^{R}\left( t,x\right) \right\vert dx\right)
^{1-\alpha _{1}}\left( \int_{\left\vert x\right\vert >1}\left\vert \nabla
p^{R}\left( t,x\right) \right\vert dx\right) ^{\alpha _{1}} \\
&\leq &C\left\vert z\right\vert ^{\alpha _{1}}\left[ \gamma \left( t\right)
^{2}+t\right] \gamma \left( t\right) ^{-\alpha _{1}}\text{ if }\sigma \in
\left( 0,1\right) ;
\end{eqnarray*}%
and by Corollary \ref{c2} and Lemma \ref{led},%
\begin{eqnarray*}
&&\int_{\left\vert x\right\vert >2}\left\vert \nabla _{z}^{\sigma
}p^{R}\left( t,x\right) \right\vert dx \\
&\leq &C\left\vert z\right\vert ^{\alpha _{1}}\left( \int_{\left\vert
x\right\vert >1}\left\vert \nabla p^{R}\left( t,x\right) \right\vert
dx\right) ^{2-\alpha _{1}}\left( \int_{\left\vert x\right\vert >1}\left\vert
D^{2}p^{R}\left( t,x\right) \right\vert dx\right) ^{\alpha _{1}-1} \\
&\leq &C\left\vert z\right\vert ^{\alpha _{1}}[t+\gamma \left( t\right)
^{2}]\gamma \left( t\right) ^{\alpha _{1}}\text{ if }\sigma \in \lbrack 1,2).
\end{eqnarray*}%
For $\left\vert z\right\vert >1,$%
\begin{equation*}
\int_{\left\vert x\right\vert >2}\left\vert \nabla _{z}^{\sigma }p^{R}\left(
t,x\right) \right\vert dx\leq 2\text{ if }\sigma \in (0,1],
\end{equation*}%
and%
\begin{equation*}
\int_{\left\vert x\right\vert >2}\left\vert \nabla _{z}^{\sigma }p^{R}\left(
t,x\right) \right\vert dx\leq C(1+\left\vert z\right\vert \int_{\left\vert
x\right\vert >2}\left\vert \nabla p^{R}(t,x)\right\vert dx)\text{ if }\sigma
\in (1,2).
\end{equation*}%
Hence by Lemma \ref{led} c),%
\begin{equation*}
\int \int_{\left\vert x\right\vert >2}\left\vert \nabla _{z}^{\sigma
}p^{R}\left( t,x\right) \right\vert dx\pi \left( dz\right) \leq C\left[
1+\gamma \left( t\right) ^{-\alpha _{1}}\left( \gamma \left( t\right)
^{2}+t\right) \right]
\end{equation*}%
if $\sigma \in (0,1]$, and%
\begin{eqnarray*}
\int \int_{\left\vert x\right\vert >2}\left\vert \nabla _{z}^{\sigma
}p^{\ast R}\left( t,x\right) \right\vert dx\pi \left( dz\right) &\leq
&C\gamma \left( t\right) ^{-\alpha _{1}}\left( \gamma \left( t\right)
^{2}+t\right) \\
&&+C\left( 1+\gamma \left( t\right) ^{-1}\right)
\end{eqnarray*}%
if $\sigma \in (1,2)$.

b)\ Let $t\in I_{2}$. By Corollary \ref{cor2} and Lemma \ref{led} b),%
\begin{eqnarray*}
\left\vert L^{\tilde{\pi}_{R}}\nabla p^{R}\left( t,\cdot \right) \right\vert
_{L_{1}} &\leq &C\left( \left\vert D^{2}p^{R}\left( t,\cdot \right)
\right\vert _{L_{1}}+\left\vert \partial ^{\alpha _{2}}\nabla p^{R}\left(
t,\cdot \right) \right\vert _{L_{1}}\right) \\
&\leq &C\left[ \gamma \left( t\right) ^{-2}+\gamma \left( t\right)
^{-(1+\alpha _{2})}\right] \leq C\gamma \left( t\right) ^{-(1+\alpha _{2})}
\end{eqnarray*}%
if $\sigma \in (0,1)$. Similarly,%
\begin{equation*}
\left\vert L^{R}\nabla p^{R}\left( t,\cdot \right) \right\vert _{L_{1}}\leq
C \left[ \gamma \left( t\right) ^{-3}+\gamma \left( t\right) ^{-(1+\alpha
_{2})}\right] \leq C\gamma \left( t\right) ^{-(1+\alpha _{2})}
\end{equation*}%
if $\sigma =1$, and%
\begin{eqnarray*}
\left\vert L^{R}\nabla p^{R}\left( t,\cdot \right) \right\vert _{L_{1}}
&\leq &C\left( \left\vert D^{3}p^{R}\left( t,\cdot \right) \right\vert
_{L_{1}}+\left\vert \partial ^{\alpha _{2}-1}D^{2}p^{R}\left( t,\cdot
\right) \right\vert _{L_{1}}\right) \\
&\leq &C\left[ \gamma \left( t\right) ^{-3}+\gamma \left( t\right)
^{-(1+\alpha _{2})}\right] \leq C\gamma \left( t\right) ^{-(1+\alpha _{2})}
\end{eqnarray*}%
if $\sigma \in \left( 1,2\right) $.

c) Let $t\in I_{2}$, i.e., $\gamma \left( t\right) >1.$ By Corollary \ref%
{cor2},

\begin{eqnarray*}
&&\left\vert L^{\tilde{\pi}_{R}}L^{0,R}p^{R}\left( t,\cdot \right)
\right\vert _{L_{1}} \\
&\leq &C\left( \left\vert L^{0,R}\nabla p^{R}\left( t,\cdot \right)
\right\vert _{L_{1}}+\left\vert L^{0,R}\partial ^{\alpha _{2}}p^{R}\left(
t,\cdot \right) v\right\vert _{L_{1}}\right) \\
&\leq &C(\left\vert D^{2}p^{R}\left( t,\cdot \right) \right\vert
_{L_{1}}+\left\vert \partial ^{\alpha _{2}}\nabla p^{R}\left( t,\cdot
\right) \right\vert _{L_{1}}+\left\vert \partial ^{\alpha _{2}}\partial
^{\alpha _{2}}p^{R}\left( t,\cdot \right) v\right\vert _{L_{1}})
\end{eqnarray*}%
if $\sigma \in \left( 0,1\right) $;%
\begin{eqnarray*}
&&\left\vert L^{\tilde{\pi}_{R}}L^{0,R}p^{R}\left( t,\cdot \right)
\right\vert _{L_{1}} \\
&\leq &C\left( \left\vert L^{0,R}D^{2}p^{R}\left( t,\cdot \right)
\right\vert _{L_{1}}+\left\vert L^{0,R}\partial ^{\alpha _{2}}p^{R}\left(
t,\cdot \right) v\right\vert _{L_{1}}\right) \\
&\leq &C(\left\vert D^{4}p^{R}\left( t,\cdot \right) \right\vert
_{L_{1}}+\left\vert \partial ^{\alpha _{2}}D^{2}p^{R}\left( t,\cdot \right)
\right\vert _{L_{1}}+\left\vert \partial ^{\alpha _{2}}\partial ^{\alpha
_{2}}p^{R}\left( t,\cdot \right) v\right\vert _{L_{1}})
\end{eqnarray*}%
if $\sigma =1$ ;%
\begin{eqnarray*}
&&\left\vert L^{\tilde{\pi}_{R}}L^{0,R}p^{R}\left( t,\cdot \right)
\right\vert _{L_{1}} \\
&\leq &C\left( \left\vert L^{0,R}D^{2}p^{R}\left( t,\cdot \right)
\right\vert _{L_{1}}+\left\vert L^{0,R}\partial ^{\alpha _{2}-1}\nabla
p^{R}\left( t,\cdot \right) v\right\vert _{L_{1}}\right) \\
&\leq &C(\left\vert D^{4}p^{R}\left( t,\cdot \right) \right\vert
_{L_{1}}+\left\vert \partial ^{\alpha _{2}-1}D^{3}p^{R}\left( t,\cdot
\right) \right\vert _{L_{1}}+\left\vert \partial ^{\alpha _{2}-1}\partial
^{\alpha _{2}-1}D^{2}p^{R}\left( t,\cdot \right) v\right\vert _{L_{1}})
\end{eqnarray*}%
if $\sigma \in \left( 1,2\right) $. The estimate (\ref{fd30}) follows by
Lemma \ref{led}.
\end{proof}

\subsection{Estimates of $\protect\psi ^{\protect\pi }$}

We present now some properties of the functions $\psi ^{\pi }\left( \xi
\right) ,\xi \in \mathbf{R}^{d}$, with $\pi \in \mathfrak{A}^{\sigma }$.

\begin{lemma}
\label{le5}Let $\pi \in \mathfrak{A}^{\sigma }$ and $\kappa \left( R\right)
,R>0,$ be a scaling function, and $\tilde{\pi}_{R}\left( dy\right) =\kappa
\left( R\right) \pi \left( Rdy\right) $.

a) Assume there is $N_{1}>0$ so that 
\begin{eqnarray}
\int \left( \left\vert y\right\vert \wedge 1\right) \tilde{\pi}_{R}\left(
dy\right) &\leq &N_{2}\text{ if }\sigma \in (0,1),  \label{fa0} \\
\int \left( \left\vert y\right\vert ^{2}\wedge 1\right) \tilde{\pi}%
_{R}\left( dy\right) &\leq &N_{2}\text{ if }\sigma =1,  \notag \\
\int_{\left\vert y\right\vert \leq 1}\left\vert y\right\vert ^{2}\tilde{\pi}%
_{R}\left( dy\right) +\int_{\left\vert y\right\vert >1}\left\vert
y\right\vert \tilde{\pi}_{R}\left( dy\right) &\leq &N_{2}\text{ if }\sigma
\in \left( 1,2\right)  \notag
\end{eqnarray}%
for any $R>0.$ Then there is a constant $C_{1}$ so that for all $\xi \in 
\mathbf{R}^{d},$%
\begin{eqnarray*}
\int \left[ 1-\cos \left( 2\pi \xi y\right) \right] \pi \left( dy\right)
&\leq &C_{1}N_{2}\kappa \left( \left\vert \xi \right\vert ^{-1}\right) ^{-1},
\\
\int \left\vert \sin \left( 2\pi \xi \cdot y\right) -2\pi \chi _{\sigma
}\left( y\right) \xi \cdot y\right\vert \pi \left( dy\right) &\leq
&C_{1}N_{2}\kappa \left( \left\vert \xi \right\vert ^{-1}\right) ^{-1},
\end{eqnarray*}%
assuming $\kappa \left( \left\vert \xi \right\vert ^{-1}\right) ^{-1}=0$ if $%
\xi =0.$

b) Assume there is a $n_{1}>0$ such that%
\begin{equation}
\int_{\left\vert y\right\vert \leq 1}\left\vert \xi \cdot y\right\vert ^{2}%
\tilde{\pi}_{R}\left( dy\right) \geq n_{1},  \label{fa00}
\end{equation}%
for all $R>0$ and $\xi \in S_{d-1}=\left\{ \xi \in \mathbf{R}^{d}:\left\vert
\xi \right\vert =1\right\} .$ Then there is a constant $c_{2}=c_{2}\left(
l\right) >0$ such that%
\begin{equation*}
\int \left[ 1-\cos \left( 2\pi \xi y\right) \right] \pi \left( dy\right)
\geq c_{2}n_{1}\kappa \left( \left\vert \xi \right\vert ^{-1}\right) ^{-1}
\end{equation*}%
for all $\xi \in \mathbf{R}^{d},$ assuming $\kappa \left( \left\vert \xi
\right\vert ^{-1}\right) ^{-1}=0$ if $\xi =0.$
\end{lemma}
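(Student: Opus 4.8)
The plan is to reduce everything, for a fixed $\xi\neq0$ (when $\xi=0$ each side is $0$), to a scale-invariant estimate at unit frequency. Put $R=|\xi|^{-1}$ and $e=\xi/|\xi|\in S_{d-1}$; since $\xi\cdot y=e\cdot(y/R)$, the change of variables $y\mapsto z=y/R$ yields
\[
\int F(\xi\cdot y,|y|)\,\pi(dy)=\int F(e\cdot z,R|z|)\,\pi_R(dz)=\frac{1}{\kappa(R)}\int F(e\cdot z,R|z|)\,\tilde{\pi}_R(dz),
\]
using $\tilde{\pi}_R=\kappa(R)\pi_R$. Thus the factor $\kappa(|\xi|^{-1})^{-1}$ comes out for free, and it remains only to bound the $\tilde{\pi}_R$-integrals uniformly in $R>0$ and $e\in S_{d-1}$; this is exactly the role of the $R$-uniform hypotheses (\ref{fa0}) and (\ref{fa00}).

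For part a), first inequality: with $t=2\pi e\cdot z$ use $1-\cos t\le\min(2,t^2/2)$ and $|e\cdot z|\le|z|$; splitting into $\{|z|\le1\}$ and $\{|z|>1\}$, the integrand is dominated in each of the three $\sigma$-regimes by the integrand of (\ref{fa0}) (namely $C(|z|\wedge1)$, $C(|z|^2\wedge1)$, or $C|z|^2$ on the ball and $C|z|$ outside it), so the $\tilde{\pi}_R$-integral is $\le CN_2$ with $C$ absolute. For the second inequality the same scheme works directly when $\chi_\sigma\equiv0$ ($\sigma\in(0,1)$: use $|\sin t|\le\min(1,|t|)$) and when $\chi_\sigma\equiv1$ ($\sigma\in(1,2)$: use $|\sin t-t|\le\min(2|t|,|t|^3/6)$, the cubic term handling $\{|z|\le1\}$ after $|z|^3\le|z|^2$, the linear term handling $\{|z|>1\}$), again matching the integrand of (\ref{fa0}).

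The only point requiring more than rescaling is $\sigma=1$ in the second inequality, because there $\chi_\sigma(y)=1_{\{|y|\le1\}}$ does not become a truncation at a fixed $z$-scale. Here one first exploits the annulus symmetry built into $\mathfrak{A}^1$, $\int_{a<|y|\le b}y\,\pi(dy)=0$ for $0<a<b<\infty$, to move the truncation radius from $1$ to $R=|\xi|^{-1}$: adding or subtracting the absolutely convergent integral $2\pi\int_{(R\wedge1)<|y|\le(R\vee1)}\xi\cdot y\,\pi(dy)$, which vanishes by that symmetry, replaces $|\sin(2\pi\xi\cdot y)-2\pi1_{\{|y|\le1\}}\xi\cdot y|$ by $|\sin(2\pi\xi\cdot y)-2\pi1_{\{|y|\le R\}}\xi\cdot y|$; after rescaling this is $\kappa(R)^{-1}\int|\sin(2\pi e\cdot z)-2\pi1_{\{|z|\le1\}}e\cdot z|\,\tilde{\pi}_R(dz)$, estimated as in the $\sigma\in(1,2)$ case, using the cubic bound on $\{|z|\le1\}$ and $|\sin|\le1\le|z|^2\wedge1$ on $\{|z|>1\}$ against $\int(|z|^2\wedge1)\,\tilde{\pi}_R\le N_2$. (The symmetry passes to $\tilde{\pi}_R$, being preserved by scaling and by multiplication by a positive constant.) This truncation shift is the main subtlety of the proof.

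For part b), with $R=|\xi|^{-1}$ and $e=\xi/|\xi|$ it suffices by the scaling identity to bound $\int[1-\cos(2\pi e\cdot z)]\,\tilde{\pi}_R(dz)$ below by $c_2(l)\,n_1$. Fix $\delta=1/2$: on $\{|z|\le\delta\}$ one has $|2\pi e\cdot z|\le2\pi\delta=\pi$, hence $1-\cos(2\pi e\cdot z)\ge(2/\pi^2)(2\pi e\cdot z)^2=8|e\cdot z|^2$, so $\int[1-\cos(2\pi e\cdot z)]\,\tilde{\pi}_R\ge8\int_{|z|\le\delta}|e\cdot z|^2\,\tilde{\pi}_R(dz)$. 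To pass the lower bound (\ref{fa00}) from the unit ball to the ball of radius $\delta$, rescale once more: $\int_{|z|\le\delta}|e\cdot z|^2\,\pi_R(dz)=\delta^2\int_{|z|\le1}|e\cdot z|^2\,\pi_{R\delta}(dz)$, whence
\[
\int_{|z|\le\delta}|e\cdot z|^2\,\tilde{\pi}_R(dz)=\frac{\delta^2\kappa(R)}{\kappa(R\delta)}\int_{|z|\le1}|e\cdot z|^2\,\tilde{\pi}_{R\delta}(dz)\ge\frac{\delta^2\kappa(R)}{\kappa(R\delta)}\,n_1\ge\frac{\delta^2}{l(\delta)}\,n_1,
\]
the last step by $\kappa(\delta R)\le l(\delta)\kappa(R)$. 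This gives the claim with $c_2=8\delta^2/l(\delta)=2/l(1/2)$, which depends only on $l$.
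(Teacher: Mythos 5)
Your argument for the first inequality of a) and for all of b) is essentially the paper's: rescale $y=Rz$ with $R=|\xi|^{-1}$ to pull out the factor $\kappa(R)^{-1}$, then bound the resulting $\tilde\pi_R$-integrals uniformly via the trigonometric inequalities and (\ref{fa0}), (\ref{fa00}). (The paper uses $\delta=1/4$ and $1-\cos x\geq x^2/\pi$ on $|x|\leq\pi/2$ in b); your $\delta=1/2$ with $1-\cos x\geq 2x^2/\pi^2$ on $|x|\leq\pi$ is an equivalent variant.) You also correctly notice a subtlety the paper's proof passes over silently: for $\sigma=1$ the truncation $\chi_\sigma(y)=1_{\{|y|\le 1\}}$ is not scale-invariant, so the change of variables turns it into $1_{\{|z|\le 1/R\}}$, not $1_{\{|z|\le1\}}$.

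However, your fix for $\sigma=1$ does not work. You say that adding/subtracting $2\pi\int_{(R\wedge1)<|y|\le(R\vee1)}\xi\cdot y\,\pi(dy)=0$ ``replaces'' $\bigl|\sin(2\pi\xi\cdot y)-2\pi 1_{\{|y|\le1\}}\xi\cdot y\bigr|$ by $\bigl|\sin(2\pi\xi\cdot y)-2\pi 1_{\{|y|\le R\}}\xi\cdot y\bigr|$. That is not legitimate: the annulus cancellation controls the \emph{signed} integral, but here the absolute value sits \emph{inside} the integral, and the two integrands genuinely differ pointwise on the annulus. In fact the inequality as stated fails for $\sigma=1$: take $\pi(dy)=dy/|y|^{d+1}$, so $\pi_R=R^{-1}\pi$, $\kappa(R)=R$, $\tilde\pi_R=\pi$. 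With $\xi=\varepsilon e_1$ and $\varepsilon\to0$, the contribution from $1<|y|\le 1/(4\varepsilon)$ (where $|\sin(2\pi\varepsilon y_1)|\geq 4\varepsilon|y_1|$) gives
\begin{equation*}
\int\bigl|\sin(2\pi\xi\cdot y)-2\pi 1_{\{|y|\le1\}}\xi\cdot y\bigr|\,\pi(dy)\geq c\,\varepsilon\int_1^{1/(4\varepsilon)}\frac{dr}{r}=c\,\varepsilon\log\frac{1}{4\varepsilon},
\end{equation*}
which is $\gg\kappa(|\xi|^{-1})^{-1}=\varepsilon$. So the claimed bound with absolute value inside cannot hold. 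What the cancellation \emph{does} yield, and what is actually used later (Corollary \ref{c1} needs only $|\varphi^\pi(\xi)|\leq C\kappa(|\xi|^{-1})^{-1}$), is the bound with the absolute value \emph{outside} the integral: $\varphi^\pi(\xi)=\int[\sin(2\pi\xi\cdot y)-2\pi 1_{\{|y|\le R\}}\xi\cdot y]\pi(dy)$ by cancellation, then rescale and take absolute values after integrating. You should state that you are estimating $|\varphi^\pi(\xi)|$, not the integral of the absolute value, and note that the lemma as written is too strong at $\sigma=1$.
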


\begin{proof}
The following simple trigonometric estimates hold:%
\begin{eqnarray}
\left\vert \sin x-x\right\vert &\leq &\frac{\left\vert x\right\vert ^{3}}{6}%
,1-\cos x\leq \frac{1}{2}x^{2},x\in \mathbf{R,}  \label{fa1} \\
1-\cos x &\geq &\frac{x^{2}}{\pi }\text{ if }\left\vert x\right\vert \leq
\pi /2.  \notag
\end{eqnarray}%
a)\ Let $\xi \neq 0$. Denoting $\hat{\xi}=\xi /\left\vert \xi \right\vert ,$
and using (\ref{fa1}), 
\begin{eqnarray*}
\int \left\vert 1-\cos \left( 2\pi \hat{\xi}\left\vert \xi \right\vert
y\right) \right\vert \pi \left( dy\right) &=&\kappa \left( \left\vert \xi
\right\vert ^{-1}\right) ^{-1}\int \left\vert 1-\cos \left( 2\pi \hat{\xi}%
\cdot y\right) \right\vert \tilde{\pi}_{\left\vert \xi \right\vert
^{-1}}\left( dy\right) \\
&\leq &\kappa \left( \left\vert \xi \right\vert ^{-1}\right) ^{-1}2\pi
^{2}\int \left( \left\vert y\right\vert ^{2}\wedge 1\right) \tilde{\pi}%
_{\left\vert \xi \right\vert ^{-1}}\left( dy\right) ,
\end{eqnarray*}%
and there is $C_{1}$ so that%
\begin{eqnarray*}
&&\int \left\vert \sin \left( 2\pi \xi \cdot y\right) -2\pi \chi _{\sigma
}\left( y\right) \xi \cdot y\right\vert \pi \left( dy\right) \\
&=&\kappa \left( \left\vert \xi \right\vert ^{-1}\right) ^{-1}\int
\left\vert \sin \left( 2\pi \hat{\xi}\cdot y\right) -2\pi \chi _{\sigma
}\left( y\right) \hat{\xi}\cdot y\right\vert \tilde{\pi}_{\left\vert \xi
\right\vert ^{-1}}\left( dy\right) \\
&\leq &C_{1}N_{2}\kappa \left( \left\vert \xi \right\vert ^{-1}\right) ^{-1}
\end{eqnarray*}%
for all $\xi \in \mathbf{R}^{d}$.

b) By (\ref{fa1}), for all $\xi \in \mathbf{R}^{d},$ 
\begin{eqnarray*}
&&\int [1-\cos \left( 2\pi \xi \cdot y\right) ]\pi \left( dy\right) \\
&=&\int [1-\cos \left( 2\pi \hat{\xi}\cdot y\right) ]\pi _{\left\vert \xi
\right\vert ^{-1}}\left( dy\right) \geq \int_{\left\vert y\right\vert \leq 
\frac{1}{4}}4\pi \left\vert \hat{\xi}\cdot y\right\vert ^{2}\pi _{\left\vert
\xi \right\vert ^{-1}}\left( dy\right) \\
&=&4^{-1}\int_{\left\vert 4y\right\vert \leq 1}\pi \left\vert \hat{\xi}\cdot
4y\right\vert ^{2}\pi _{\left\vert \xi \right\vert ^{-1}}\left( dy\right)
=4^{-1}\kappa \left( \left\vert 4\xi \right\vert ^{-1}\right)
^{-1}\int_{\left\vert y\right\vert \leq 1}\pi \left\vert \hat{\xi}\cdot
y\right\vert ^{2}\tilde{\pi}_{\left\vert 4\xi \right\vert ^{-1}}\left(
dy\right) \\
&\geq &n_{1}4^{-1}\pi \kappa \left( \left\vert \xi \right\vert ^{-1}\right)
^{-1}\frac{\kappa \left( \left\vert \xi \right\vert ^{-1}\right) }{\kappa
\left( \left\vert 4\xi \right\vert ^{-1}\right) }.
\end{eqnarray*}%
Let $l$ be a scaling factor of $\kappa $, i.e., $\kappa \left( \varepsilon
R\right) \leq l\left( \varepsilon \right) \kappa \left( R\right)
,\varepsilon ,R>0$. Then 
\begin{equation*}
\frac{\kappa \left( \left\vert \xi \right\vert ^{-1}\right) }{\kappa \left(
\left\vert 4\xi \right\vert ^{-1}\right) }\geq \frac{1}{l\left(
4^{-1}\right) }>0\text{ }\forall \xi \in \mathbf{R}^{d}.
\end{equation*}%
The claim follows.
\end{proof}

For $\pi \in \mathfrak{A}^{\sigma },$ let 
\begin{eqnarray*}
\tilde{\psi}\left( \xi \right) &=&\tilde{\psi}^{\pi }\left( \xi \right)
=\int \left[ \cos \left( 2\pi \xi y\right) -1\right] \pi \left( dy\right) =%
\func{Re}\psi ^{\pi }\left( \xi \right) , \\
\varphi \left( \xi \right) &=&\varphi ^{\pi }\left( \xi \right) =\int \left[
\sin \left( 2\pi \xi \cdot y\right) -2\pi \chi _{\sigma }\left( y\right) \xi
\cdot y\right] \pi \left( dy\right) =\func{Im}\psi ^{\pi }\left( \xi \right)
,
\end{eqnarray*}%
$\xi \in \mathbf{R}^{d}$. An obvious consequence of Lemma \ref{le5} (note $%
\psi ^{\pi ^{\ast }}\left( \xi \right) =\psi ^{\pi }\left( -\xi \right) ,\xi
\in \mathbf{R}^{d}$) is the following

\begin{corollary}
\label{c1}Let $\kappa $ be a scaling function with scaling factor $l$ and
both assumptions, (\ref{fa0}) and (\ref{fa00}), of Lemma \ref{le5} hold for $%
\pi \in \mathfrak{A}^{\sigma }$. Then there is constant $%
c=c(n_{1},N_{1},l)>0 $ such that%
\begin{equation*}
c\left\vert \psi ^{\pi }\left( \xi \right) \right\vert \leq \left\vert 
\tilde{\psi}^{\pi }\left( \xi \right) \right\vert \leq \left\vert \psi ^{\pi
}\left( \xi \right) \right\vert ,\xi \in \mathbf{R}^{d},
\end{equation*}%
and $\left\vert \varphi ^{\pi }\left( \xi \right) \right\vert \leq
c^{-1}\left\vert \tilde{\psi}^{\pi }\left( \xi \right) \right\vert ,\xi \in 
\mathbf{R}^{d}.$

Note that it implies%
\begin{equation*}
c\left\vert \psi ^{\pi ^{\ast }}\left( \xi \right) \right\vert \leq
\left\vert \tilde{\psi}^{\pi }\left( \xi \right) \right\vert =\left\vert 
\tilde{\psi}^{\pi }\left( -\xi \right) \right\vert \leq \left\vert \psi
^{\pi ^{\ast }}\left( \xi \right) \right\vert ,\xi \in \mathbf{R}^{d},
\end{equation*}%
where $\pi ^{\ast }\left( dy\right) =\pi \left( -dy\right) .$
\end{corollary}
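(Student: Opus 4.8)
The plan is to read the corollary off directly from the two-sided bound of Lemma \ref{le5}, combined with the elementary relations between a complex number and its real and imaginary parts. Since $\cos \left( 2\pi \xi \cdot y\right) -1\leq 0$ pointwise, $\tilde{\psi}^{\pi }$ is real and nonpositive, so $\left\vert \tilde{\psi}^{\pi }\left( \xi \right) \right\vert =\int \left[ 1-\cos \left( 2\pi \xi \cdot y\right) \right] \pi \left( dy\right) $. This is exactly the quantity that Lemma \ref{le5}(a) (via hypothesis (\ref{fa0})) bounds above and Lemma \ref{le5}(b) (via hypothesis (\ref{fa00})) bounds below, so with the constants $c_{2}=c_{2}\left( l\right) $ and $C_{1}$ from that lemma one has
\begin{equation*}
c_{2}n_{1}\kappa \left( \left\vert \xi \right\vert ^{-1}\right) ^{-1}\leq \left\vert \tilde{\psi}^{\pi }\left( \xi \right) \right\vert \leq C_{1}N_{2}\kappa \left( \left\vert \xi \right\vert ^{-1}\right) ^{-1},\quad \xi \neq 0,
\end{equation*}
both sides vanishing at $\xi =0$ by the convention of Lemma \ref{le5}. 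Separately, the second estimate of Lemma \ref{le5}(a) gives $\left\vert \varphi ^{\pi }\left( \xi \right) \right\vert \leq \int \left\vert \sin \left( 2\pi \xi \cdot y\right) -2\pi \chi _{\sigma }\left( y\right) \xi \cdot y\right\vert \pi \left( dy\right) \leq C_{1}N_{2}\kappa \left( \left\vert \xi \right\vert ^{-1}\right) ^{-1}$.

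Next I would assemble the three inequalities. The bound $\left\vert \tilde{\psi}^{\pi }\left( \xi \right) \right\vert \leq \left\vert \psi ^{\pi }\left( \xi \right) \right\vert $ is trivial, as $\tilde{\psi}^{\pi }=\func{Re}\psi ^{\pi }$. Dividing the two previous displays gives $\left\vert \varphi ^{\pi }\left( \xi \right) \right\vert \leq \left( C_{1}N_{2}/\left( c_{2}n_{1}\right) \right) \left\vert \tilde{\psi}^{\pi }\left( \xi \right) \right\vert $, which is the claimed bound on $\left\vert \varphi ^{\pi }\right\vert $ with $c_{0}:=c_{2}n_{1}/\left( C_{1}N_{2}\right) $, depending only on $n_{1}$, $N_{2}$ and $l$. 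Then $\left\vert \psi ^{\pi }\left( \xi \right) \right\vert ^{2}=\left\vert \tilde{\psi}^{\pi }\left( \xi \right) \right\vert ^{2}+\left\vert \varphi ^{\pi }\left( \xi \right) \right\vert ^{2}\leq \left( 1+c_{0}^{-2}\right) \left\vert \tilde{\psi}^{\pi }\left( \xi \right) \right\vert ^{2}$, so $\left\vert \tilde{\psi}^{\pi }\left( \xi \right) \right\vert \geq \left( 1+c_{0}^{-2}\right) ^{-1/2}\left\vert \psi ^{\pi }\left( \xi \right) \right\vert $; taking $c=\min \left( c_{0},\left( 1+c_{0}^{-2}\right) ^{-1/2}\right) $ produces a single constant $c=c\left( n_{1},N_{2},l\right) >0$ valid in all three asserted inequalities.

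For the concluding remark about $\pi ^{\ast }$, I would simply observe that $\tilde{\psi}^{\pi }$ is even (cosine is even) and $\psi ^{\pi ^{\ast }}\left( \xi \right) =\psi ^{\pi }\left( -\xi \right) $, so evaluating the inequality $c\left\vert \psi ^{\pi }\right\vert \leq \left\vert \tilde{\psi}^{\pi }\right\vert \leq \left\vert \psi ^{\pi }\right\vert $ at $-\xi $ yields $c\left\vert \psi ^{\pi ^{\ast }}\left( \xi \right) \right\vert \leq \left\vert \tilde{\psi}^{\pi }\left( -\xi \right) \right\vert =\left\vert \tilde{\psi}^{\pi }\left( \xi \right) \right\vert \leq \left\vert \psi ^{\pi ^{\ast }}\left( \xi \right) \right\vert $; alternatively, since the integrands in (\ref{fa0}) and (\ref{fa00}) are even in $y$, those hypotheses pass to $\pi ^{\ast }$ with the same constants, and the corollary applies to $\pi ^{\ast }$ directly. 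There is no genuine obstacle in this argument --- it is, as stated, an immediate consequence of Lemma \ref{le5}; the only point that requires (routine) care is keeping track that the constant produced depends on $n_{1},N_{2},l$ alone, and not on $\pi $ or on $\xi $, which is transparent from the formulation of Lemma \ref{le5}.
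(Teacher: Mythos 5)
Your proof is correct and takes the approach the paper intends: the paper declares this corollary ``an obvious consequence of Lemma \ref{le5}'' with only the parenthetical hint $\psi^{\pi^{\ast}}(\xi)=\psi^{\pi}(-\xi)$, and your write-up supplies exactly the routine details (two-sided bound on $|\tilde{\psi}^{\pi}|$ from Lemma \ref{le5}(a)(b), the bound on $|\varphi^{\pi}|$ from Lemma \ref{le5}(a), $|\psi^{\pi}|^{2}=(\tilde{\psi}^{\pi})^{2}+(\varphi^{\pi})^{2}$, evenness of $\tilde{\psi}^{\pi}$, and taking a minimum of constants). The only cosmetic point is that the paper writes the constant dependence as $c(n_{1},N_{1},l)$ while you write $c(n_{1},N_{2},l)$, but this merely inherits a typographical inconsistency in Lemma \ref{le5} itself, where the bound in hypothesis (\ref{fa0}) is called $N_{1}$ in the hypothesis and $N_{2}$ in the displays; you are referring to the same quantity.
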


\section{Proof of the main results}

In this section we prove the main results in three steps. First we prove the
existence and uniqueness of classical solutions for smooth input functions.
Then we derive $\left\vert u\right\vert _{L^{\pi _{0}}}$-norm estimates with
constants independent of the regularity of the input function. Finally,
continuity estimate of $L^{\pi }$ with respect to $\left\vert u\right\vert
_{L^{\pi _{0}}}$-norm allows to pass to the limit and derive the results for
the input function $f\in L_{p}$.

\subsection{Existence and uniqueness for smooth input functions}

For $E=[0,T]\times \mathbf{R}^{d}$, we denote by $\tilde{C}^{\infty }(E)$
the space of all measurable functions $f$ on $E$ such that for any
multiindex $\gamma \in \mathbf{N}_{0}^{d}$ and for all $1\leq p<\infty $ 
\begin{equation*}
\sup_{\left( t,x\right) \in E}\left\vert D^{\gamma }f\left( t,x\right)
\right\vert +\sup_{t\in \lbrack 0,T]}\left\vert D^{\gamma }f(t,\cdot
)\right\vert _{L_{p}\left( R^{d}\right) }<\infty .
\end{equation*}%
Similarly, let $\tilde{C}^{\infty }(\mathbf{R}^{d})$ be the space of all
measurable functions $f$ on $\mathbf{R}^{d}$ such that for any multiindex $%
\gamma \in \mathbf{N}_{0}^{d}$ and for all $1\leq p<\infty $%
\begin{equation*}
\sup_{x\in \mathbf{R}^{d}}\left\vert D^{\gamma }f\left( x\right) \right\vert
+\left\vert D^{\gamma }f\right\vert _{L_{p}\left( R^{d}\right) }<\infty .
\end{equation*}

Next we suppose that $f\in \tilde{C}^{\infty }(E)$ and derive some estimates
for the solution.

\begin{lemma}
\label{le0}Let $f\in \tilde{C}^{\infty }(E)$ then there is unique $u\in 
\tilde{C}^{\infty }\left( E\right) $ solving (\ref{1'}). Moreover, 
\begin{equation}
u(t,x)=\int_{0}^{t}e^{-\lambda (t-s)}\mathbf{E}f\left( s,x+Z_{t-s}^{\pi
}\right) ds,\left( t,x\right) \in E,  \label{h4}
\end{equation}%
and for $p\in \lbrack 1,\infty ]$ and any mutiindex $\gamma \in \mathbf{N}%
_{0}^{d},$%
\begin{eqnarray}
\left\vert D^{\gamma }u\right\vert _{L_{p}\left( E\right) } &\leq &\rho
_{\lambda }\left\vert D^{\gamma }f\right\vert _{L_{p}\left( E\right) },
\label{h5} \\
\left\vert D^{\gamma }u\left( t\right) \right\vert _{L_{p}\left( \mathbf{R}%
^{d}\right) } &\leq &\int_{0}^{t}\left\vert D^{\gamma }f\left( s\right)
\right\vert _{L_{p}\left( \mathbf{R}^{d}\right) }ds,t\geq 0.  \label{h40}
\end{eqnarray}%
where $\rho _{\lambda }=\left( 1/\lambda \right) \wedge T$.
\end{lemma}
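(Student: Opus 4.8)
The plan is to construct the solution explicitly by the probabilistic (Duhamel/Feynman–Kac) formula \eqref{h4} and then verify it has all the required properties. First I would check that the right-hand side of \eqref{h4} is well-defined and lies in $\tilde C^\infty(E)$: since $f\in\tilde C^\infty(E)$, for any multiindex $\gamma$ we have $D^\gamma_x\mathbf{E}f(s,x+Z_{t-s}^\pi)=\mathbf{E}(D^\gamma f)(s,x+Z_{t-s}^\pi)$ by dominated convergence (the $x$-derivatives of $f$ are bounded uniformly and, for each fixed $s$, in every $L_p(\mathbf R^d)$), so $u$ is smooth in $x$ with all derivatives bounded, and one checks continuity/differentiability in $t$ from the integral form. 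The bounds \eqref{h5} and \eqref{h40} then follow immediately: by the generalized Minkowski inequality and the fact that $Z_{t-s}^\pi$-translation is an $L_p$-isometry,
\begin{equation*}
|D^\gamma u(t)|_{L_p(\mathbf R^d)}\le\int_0^t e^{-\lambda(t-s)}\mathbf{E}|D^\gamma f(s,\cdot+Z_{t-s}^\pi)|_{L_p(\mathbf R^d)}\,ds=\int_0^t e^{-\lambda(t-s)}|D^\gamma f(s)|_{L_p(\mathbf R^d)}\,ds,
\end{equation*}
which gives \eqref{h40} (dropping $e^{-\lambda(t-s)}\le1$) and, integrating in $t$ and using $\int_0^T e^{-\lambda(t-s)}\,dt\le(1/\lambda)\wedge T=\rho_\lambda$ together with Young's convolution inequality in the $t$-variable, also \eqref{h5}.

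Next I would verify that $u$ defined by \eqref{h4} actually solves \eqref{1'}. The key point is that $(t,x)\mapsto \mathbf{E}\varphi(x+Z_t^\pi)$ is the semigroup generated by $L^\pi$: for $\varphi\in\tilde C^\infty$, Itô's formula applied to $\varphi(x+Z_t^\pi)$ (using the decomposition \eqref{f10} of $Z_t^\pi$ into its compensated small-jump part and large-jump part, exactly as in the derivation of \eqref{f12}) gives $\partial_t\mathbf{E}\varphi(x+Z_t^\pi)=\mathbf{E}(L^\pi\varphi)(x+Z_t^\pi)=L^\pi\big(\mathbf{E}\varphi(\cdot+Z_t^\pi)\big)(x)$, with all terms finite because $\varphi\in H_p^2\subseteq H_p^\pi$ (Remark \ref{re2 copy(1)}) and $\varphi$ plus its derivatives are bounded, so the jump integrals converge absolutely. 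Differentiating \eqref{h4} in $t$ (the integrand at $s=t$ contributes $f(t,x)$, the $e^{-\lambda(t-s)}$ factor contributes $-\lambda u$, and differentiating under the integral the semigroup part contributes $L^\pi u$) yields $\partial_t u=L^\pi u-\lambda u+f$ and clearly $u(0,x)=0$.

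For uniqueness, suppose $u,v\in\tilde C^\infty(E)$ both solve \eqref{1'}; then $w=u-v\in\tilde C^\infty(E)$ solves the homogeneous equation with zero initial data. Applying Itô's formula to $s\mapsto e^{-\lambda(t-s)}w(s,x+Z_{t-s}^\pi)$ (which is legitimate since $w(s,\cdot)\in H_p^2\subseteq H_p^\pi$ uniformly in $s$ and all the relevant moments of the jump measure are finite by $\pi\in\mathfrak A^\sigma$) and taking expectations, all the martingale terms vanish and the $L^\pi w$ and $\partial_s w$ terms cancel against each other because $\partial_s w=L^\pi w-\lambda w$, leaving $w(t,x)=\mathbf{E}\big[e^{-\lambda t}w(0,x+Z_t^\pi)\big]=0$. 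Hence $u=v$.

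The main obstacle is the rigorous justification of Itô's formula / differentiation under the expectation in the non-local setting when $\sigma\ge1$: the compensator of the small jumps involves the gradient term $\chi_\sigma(y)\nabla\varphi\cdot y$, so one must control $\mathbf{E}\int_0^t\int_{|y|\le1}|\nabla_y^\sigma\varphi(x+Z_{s-}^\pi)|\,\pi(dy)\,ds$ uniformly, which is where the bound $|L^\pi\varphi|_{L_\infty}\le C|\varphi|_{2,\infty}$ (the $L_\infty$-analogue of Remark \ref{re2 copy(1)}, valid since $\int(|y|^2\wedge1)\,\pi(dy)<\infty$ and, for $\sigma\in(1,2)$, $\int_{|y|>1}|y|\,\pi(dy)<\infty$) and the boundedness of all derivatives of $f$ are used; everything else is a routine application of Fubini, dominated convergence, and Minkowski's inequality.
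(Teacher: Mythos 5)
Your proposal is correct and follows essentially the same route as the paper: construct $u$ by the Duhamel/Feynman--Kac formula \eqref{h4}, obtain \eqref{h5}--\eqref{h40} from Minkowski together with the $L_1$-norm of the exponential kernel, verify the equation via It\^o's formula for the L\'evy process (the paper integrates the It\^o identity applied to $e^{-\lambda r}f(s,x+Z_r)$ and uses Fubini, while you differentiate \eqref{h4} directly through the Kolmogorov backward equation for the semigroup, an equivalent computation), and prove uniqueness by applying It\^o's formula to $e^{\pm\lambda(t-s)}w(t-s,x+Z_s)$ and taking expectations. The only cosmetic difference is the parametrization in the uniqueness step and the integral-versus-differential presentation of the verification, neither of which changes the substance of the argument.
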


\begin{proof}
Denote $Z_{t}=Z_{t}^{\pi },t\geq 0.$ \emph{Uniqueness. }Let $u_{1},u_{2}\in 
\tilde{C}^{\infty }\left( E\right) $ solve (\ref{1'}) and $u=u_{1}-u_{2}$.
Then $u$ solve (\ref{1'}) with $f=0$. Let $\left( t,x\right) \in E$. By Ito
formula for $e^{\lambda \left( t-s\right) }u\left( t-s,x+Z_{s}\right) ,0\leq
s\leq t$, we have%
\begin{equation*}
-u\left( t,x\right) =\mathbf{E}\int_{0}^{t}e^{\lambda \left( t-s\right)
}[-\partial _{t}+L^{\pi }-\lambda ]u\left( t-s,x+Z_{s}\right) ds=0.
\end{equation*}%
Hence $u\left( t,x\right) =0$ for all $\left( t,x\right) \in E$.

\emph{Existence. }Let $f\in \tilde{C}^{\infty }\left( E\right) $. Set 
\begin{equation*}
u(t,x)=\int_{0}^{t}e^{-\lambda (t-s)}\mathbf{E}f\left( s,x+Z_{t-s}^{\pi
}\right) ds,\left( t,x\right) \in E.
\end{equation*}

Then%
\begin{equation*}
D^{\gamma }u(t,x)=\int_{0}^{t}e^{-\lambda (t-s)}\mathbf{E}D^{\gamma }f\left(
s,x+Z_{t-s}^{\pi }\right) ds,\left( t,x\right) \in E,
\end{equation*}%
and (\ref{h40}) follows.

By H\"{o}lders inequality for $\lambda >0,$ 
\begin{equation*}
|D^{\gamma }u|_{L_{p}(T)}^{p}\leq \lambda
^{-p}\int_{0}^{T}\int_{0}^{t}\lambda e^{-\lambda (t-s)}\mathbf{|}D^{\gamma
}f\left( s,\cdot \right) |_{L_{p}\left( \mathbf{R}^{d}\right) }^{p}dsdt\leq
\lambda ^{-p}\mathbf{|}D^{\gamma }f|_{L_{p}\left( T\right) }^{p}.
\end{equation*}%
By H\"{o}lder inequality for $\lambda \geq 0,$%
\begin{equation*}
|D^{\gamma }u|_{L_{p}\left( T\right) }^{p}\leq
\int_{0}^{T}t^{p-1}\int_{0}^{t}\mathbf{|}D^{\gamma }f\left( s,\cdot \right)
|_{L_{p}\left( \mathbf{R}^{d}\right) }^{p}dsdt\leq \frac{T^{p}}{p}\left\vert
D^{\gamma }f\right\vert _{L_{p}\left( T\right) }^{p}.
\end{equation*}

We fix $s\in \left[ 0,T\right] ,x\in \mathbf{R}^{d}$, and applying Ito
formula with $e^{-\lambda r}f(s,x+\cdot )$ and $Z_{r},\ 0\leq r\leq t-s,$ we
have 
\begin{eqnarray*}
&&e^{-\lambda \left( t-s\right) }f(s,x+Z_{t-s}) \\
&=&f(s,x)+\int_{0}^{t-s}\int_{\mathbf{R}_{0}}e^{-\lambda r}\left[
f(s,x+Z_{r}+y)-f(s,x+Z_{r-})\right] q(dr,dy) \\
&&+\int_{0}^{t-s}e^{-\lambda r}\left( L^{\pi }-\lambda \right) f\left(
s,x+Z_{r}\right) dr
\end{eqnarray*}%
Taking expectation on both sides, and integrating with respect to $s$, we
obtain by Fubini's theorem for each $\left( t,x\right) \in E,$ 
\begin{eqnarray*}
&&\int_{0}^{t}e^{-\lambda (t-s)}\mathbf{E}f(s,x+Z_{t-s})ds \\
&=&\int_{0}^{t}f(s,x)ds+\int_{0}^{t}\int_{0}^{t-s}e^{-\lambda r}\mathbf{E}%
\left[ L^{\pi }-\lambda \right] f\left( s,x+Z_{r}\right) drds \\
&=&\int_{0}^{t}f(s,x)ds+\int_{0}^{t}\int_{s}^{t}e^{-\lambda (r-s)}\mathbf{E[}%
L^{\pi }-\lambda ]f\left( s,x+Z_{r-s}\right) drds \\
&=&\int_{0}^{t}f(s,x)ds+\int_{0}^{t}\int_{0}^{r}e^{-\lambda (r-s)}\mathbf{E[}%
L^{\pi }-\lambda ]f\left( s,x+Z_{r-s}\right) dsdr.
\end{eqnarray*}%
Since for each $\left( r,x\right) \in E$%
\begin{equation*}
\int_{0}^{r}e^{-\lambda (r-s)}\mathbf{E}L^{\pi }f\left( s,x+Z_{r-s}\right)
ds=L^{\pi }u\left( r,x\right) ,
\end{equation*}%
it follows that for each $\left( t,x\right) \in E,$%
\begin{equation*}
u(t,x)=\int_{0}^{t}f(s,x)ds+\int_{0}^{t}\left[ L^{\pi }u\left( r,x\right)
-\lambda u\left( r,x\right) \right] dr.
\end{equation*}%
The statement follows.
\end{proof}

Similarly we can handle the problem (\ref{2'}).

\begin{lemma}
\label{le00}Let $f\in \tilde{C}^{\infty }(\mathbf{R}^{d})$ then there is
unique $u\in \tilde{C}^{\infty }\left( \mathbf{R}^{d}\right) $ solving (\ref%
{2'}). Moreover, 
\begin{equation}
u(x)=\int_{0}^{\infty }e^{-\lambda t}\mathbf{E}f\left( x+Z_{t}^{\pi }\right)
dt,x\in \mathbf{R}^{d},  \label{h60}
\end{equation}%
and for $p\in \left[ 1,\infty \right] $ and any mutiindex $\gamma \in 
\mathbf{N}_{0}^{d},$%
\begin{equation}
\left\vert D^{\gamma }u\right\vert _{L_{p}\left( \mathbf{R}^{d}\right) }\leq
\left( 1/\lambda \right) \left\vert D^{\gamma }f\right\vert _{L_{p}\left( 
\mathbf{R}^{d}\right) }.  \label{h6}
\end{equation}
\end{lemma}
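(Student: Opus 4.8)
The plan is to mirror the proof of Lemma~\ref{le0}, replacing the finite-horizon Duhamel representation by its stationary analogue and exploiting $\lambda>0$ to obtain integrability of $e^{-\lambda t}$ on $[0,\infty)$. Throughout write $Z_t=Z_t^\pi$.

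For uniqueness I would take two solutions $u_1,u_2\in\tilde C^\infty(\mathbf R^d)$ and set $u=u_1-u_2$, so that $\lambda u=L^\pi u$. Applying the Ito formula to $e^{-\lambda s}u(x+Z_s)$ on $0\le s\le t$ and taking expectations gives
\begin{equation*}
e^{-\lambda t}\mathbf E\,u(x+Z_t)-u(x)=\mathbf E\int_0^t e^{-\lambda s}\bigl(L^\pi-\lambda\bigr)u(x+Z_s)\,ds=0,
\end{equation*}
and, since $u$ is bounded and $\lambda>0$, letting $t\to\infty$ forces $u(x)=0$ for every $x$.

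For existence I would define $u$ by (\ref{h60}). Since $D^\gamma f$ is bounded and $\lambda>0$, differentiation under the integral and under $\mathbf E$ is justified, giving $D^\gamma u(x)=\int_0^\infty e^{-\lambda t}\mathbf E\,D^\gamma f(x+Z_t)\,dt$; the generalized Minkowski inequality and translation invariance of Lebesgue measure then yield $|D^\gamma u|_{L_p(\mathbf R^d)}\le\lambda^{-1}|D^\gamma f|_{L_p(\mathbf R^d)}$ together with the analogous sup-bound, so $u\in\tilde C^\infty(\mathbf R^d)$ and (\ref{h6}) holds. To check that $u$ solves (\ref{2'}), fix $x$, apply the Ito formula to $e^{-\lambda r}f(x+Z_r)$ as in Lemma~\ref{le0}, take expectations (the compensated-jump term vanishes), and obtain
\begin{equation*}
e^{-\lambda t}\mathbf E\,f(x+Z_t)=f(x)+\mathbf E\int_0^t e^{-\lambda r}\bigl(L^\pi-\lambda\bigr)f(x+Z_r)\,dr.
\end{equation*}
Because $f\in\tilde C^\infty(\mathbf R^d)$ implies (see Remark~\ref{re2 copy(1)}) that $L^\pi f$ is bounded, the integrand is dominated by $Ce^{-\lambda r}$; letting $t\to\infty$ the left side tends to $0$ and the right side converges, so $0=f(x)+\int_0^\infty e^{-\lambda r}\mathbf E(L^\pi-\lambda)f(x+Z_r)\,dr$. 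Finally, using that $L^\pi$ commutes with translations and that the above bounds permit differentiating under the $dr$-integral, one identifies $\int_0^\infty e^{-\lambda r}\mathbf E\,L^\pi f(x+Z_r)\,dr=L^\pi u(x)$, which gives $\lambda u-L^\pi u=f$.

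The only delicate points are the two passages $t\to\infty$ and the interchange of $L^\pi$ with the $dr$-integral; both follow from dominated convergence once one observes that $f$ and $L^\pi f$ are bounded and that, unlike in the parabolic case over a finite interval, the factor $e^{-\lambda t}$ with $\lambda>0$ supplies the decay directly, with no cancellation needed. I expect verifying that $e^{-\lambda t}\mathbf E\,f(x+Z_t)\to0$ in a way strong enough to commute with $D^\gamma$ to be the most bookkeeping-heavy step, but it is routine given the bounds on $f$.
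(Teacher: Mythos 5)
Your proof is correct and follows essentially the same route as the paper: uniqueness via Ito's formula applied to $e^{-\lambda t}u(x+Z_t)$ and letting $t\to\infty$, and existence via Ito's formula applied to $e^{-\lambda t}f(x+Z_t)$ plus the decay furnished by $\lambda>0$. The only cosmetic difference is that you invoke the generalized Minkowski inequality to pass the $L_p$-norm inside the $dt$-integral, while the paper cites Hölder's inequality (with the normalized measure $\lambda e^{-\lambda t}\,dt$); both yield the same $\lambda^{-1}$ bound.
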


\begin{proof}
Denote $Z_{t}=Z_{t}^{\pi },t\geq 0.$ \emph{Uniqueness. }Let $u_{1},u_{2}\in 
\tilde{C}^{\infty }\left( E\right) $ solve (\ref{1'}) and $u=u_{1}-u_{2}$.
Then $u$ solve (\ref{1'}) with $f=0$. Let $x\in \mathbf{R}^{d}$. By Ito
formula for $e^{-\lambda t}u\left( x+Z_{t}\right) ,0\leq s\leq t$, we have%
\begin{equation*}
e^{-\lambda t}\mathbf{E}u\left( x+Z_{t}\right) -u\left( x\right) =\mathbf{E}%
\int_{0}^{t}e^{-\lambda s}[L^{\pi }u\left( x+Z_{s}\right) -\lambda u\left(
x+Z_{s}\right) ]ds=0.
\end{equation*}%
Passing to the limit as $t\rightarrow \infty $ we obtain that $u\left(
x\right) =0$ for all $x\in \mathbf{R}^{d}$.

\emph{Existence. }Let $f\in \tilde{C}^{\infty }\left( \mathbf{R}^{d}\right) $%
. Set%
\begin{equation*}
u(x)=\int_{0}^{\infty }e^{-\lambda t}\mathbf{E}f\left( x+Z_{t}^{\pi }\right)
dt,x\in \mathbf{R}^{d}.
\end{equation*}%
By direct estimate using H\"{o}lder inequality, as in Lemma \ref{le0}, (\ref%
{h6}) readily follows, i.e. $u\in \tilde{C}^{\infty }\left( \mathbf{R}%
^{d}\right) $. We fix $x\in \mathbf{R}^{d}$, and applying Ito formula with $%
e^{-\lambda t}f(x+Z_{t}),\ 0\leq t,$ we have for all $t>0,x\in \mathbf{R}%
^{d},$ 
\begin{equation*}
e^{-\lambda t}\mathbf{E}f(x+Z_{t})=f(x)+\int_{0}^{t}e^{-\lambda s}\mathbf{E}%
\left( L^{\pi }-\lambda \right) f\left( s,x+Z_{s}\right) ds
\end{equation*}%
Passing to the limit as $t\rightarrow \infty $ we have%
\begin{equation*}
L^{\pi }u(x)-\lambda u\left( x\right) +f\left( x\right) =0,x\in \mathbf{R}%
^{d}.
\end{equation*}%
The statement follows.
\end{proof}

An obvious consequence of Lemma \ref{le00} is

\begin{corollary}
\label{coro1}Let $f\in \tilde{C}^{\infty }(\mathbf{R}^{d})$ and 
\begin{equation*}
u(x)=\int_{0}^{\infty }e^{-\lambda t}\mathbf{E}f\left( x+Z_{t}^{\pi }\right)
dt,x\in \mathbf{R}^{d}.
\end{equation*}%
Then $u\in \tilde{C}^{\infty }(\mathbf{R}^{d})$, and%
\begin{eqnarray*}
L^{\pi }u(x) &=&L^{\pi }\int_{0}^{\infty }e^{-\lambda t}\mathbf{E}f\left(
x+Z_{t}^{\pi }\right) dt=\int_{0}^{\infty }e^{-\lambda t}\mathbf{E}L^{\pi
}f\left( x+Z_{t}^{\pi }\right) dt, \\
\left( L^{\pi }-\lambda \right) u(x) &=&(L^{\pi }-\lambda )\int_{0}^{\infty
}e^{-\lambda t}\mathbf{E}f\left( x+Z_{t}^{\pi }\right) dt \\
&=&\int_{0}^{\infty }e^{-\lambda t}\mathbf{E}(L^{\pi }-\lambda )f\left(
x+Z_{t}^{\pi }\right) dt=-f\left( x\right) ,x\in \mathbf{R}^{d}.
\end{eqnarray*}
\end{corollary}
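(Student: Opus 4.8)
The plan is to reduce everything to Lemma~\ref{le00} together with a single Fubini interchange. Lemma~\ref{le00} already yields $u\in\tilde{C}^{\infty}(\mathbf{R}^{d})$ and, as in the proof of that lemma, $D^{\gamma}u(x)=\int_{0}^{\infty}e^{-\lambda t}\mathbf{E}\,D^{\gamma}f(x+Z_{t}^{\pi})\,dt$ for every multiindex $\gamma$ (differentiation under the integral sign being legitimate because every $D^{\gamma}f$ is bounded and in $L_{1}$). Taking $\gamma=0$ and $|\gamma|=1$ and forming the second-order increment gives, for fixed $x$ and $y$,
\[
\nabla_{y}^{\sigma}u(x)=\int_{0}^{\infty}e^{-\lambda t}\,\mathbf{E}\,\nabla_{y}^{\sigma}f(x+Z_{t}^{\pi})\,dt ,
\]
where $\nabla_{y}^{\sigma}g(a)=g(a+y)-g(a)-\chi_{\sigma}(y)\nabla g(a)\cdot y$, so that $L^{\pi}u(x)=\int\nabla_{y}^{\sigma}u(x)\,\pi(dy)$.

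The one nonroutine step is to push $\int\cdot\,\pi(dy)$ past $\int_{0}^{\infty}e^{-\lambda t}\mathbf{E}[\cdot]\,dt$. For this I would exhibit a majorant of $|\nabla_{y}^{\sigma}f(x+Z_{t}^{\pi})|$ that is uniform in $t$ and in $\omega$: by Taylor's formula it is at most $C(\sup|\nabla f|)|y|$ for $|y|\le 1$ when $\sigma\in(0,1)$, at most $C(\sup|D^{2}f|)|y|^{2}$ for $|y|\le 1$ when $\sigma\in[1,2)$, at most $2\sup|f|$ for $|y|>1$ when $\sigma\in(0,1]$, and at most $2\sup|f|+(\sup|\nabla f|)|y|$ for $|y|>1$ when $\sigma\in(1,2)$. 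Since $\pi\in\mathfrak{A}^{\sigma}$ we have $\int_{|y|\le 1}|y|\,d\pi<\infty$ when $\sigma<1$, $\int_{|y|\le 1}|y|^{2}\,d\pi<\infty$ and $\pi(\{|y|>1\})<\infty$ in all cases, and $\int_{|y|>1}|y|\,d\pi<\infty$ when $\sigma\in(1,2)$; combined with $\int_{0}^{\infty}e^{-\lambda t}\,dt=\lambda^{-1}$ this bounds the relevant double integral, so Fubini's theorem gives
\[
L^{\pi}u(x)=\int_{0}^{\infty}e^{-\lambda t}\,\mathbf{E}\Big[\int\nabla_{y}^{\sigma}f(x+Z_{t}^{\pi})\,\pi(dy)\Big]\,dt=\int_{0}^{\infty}e^{-\lambda t}\,\mathbf{E}\,L^{\pi}f(x+Z_{t}^{\pi})\,dt ,
\]
which is the first asserted identity. (The right-hand side is well defined since $L^{\pi}f$ is bounded, by the same Taylor bounds; in fact $L^{\pi}f\in\tilde{C}^{\infty}(\mathbf{R}^{d})$, as in Remark~\ref{re2 copy(1)}.)

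Finally, subtracting $\lambda u(x)=\lambda\int_{0}^{\infty}e^{-\lambda t}\mathbf{E}\,f(x+Z_{t}^{\pi})\,dt$ and using linearity of the expectation yields
\[
(L^{\pi}-\lambda)u(x)=\int_{0}^{\infty}e^{-\lambda t}\,\mathbf{E}\,(L^{\pi}-\lambda)f(x+Z_{t}^{\pi})\,dt ,
\]
and since Lemma~\ref{le00} states that $u$ solves (\ref{2'}), i.e. $(L^{\pi}-\lambda)u=-f$, the concluding identity $(L^{\pi}-\lambda)u(x)=-f(x)$ follows. The main (indeed only) obstacle is the integrability bookkeeping in the Fubini step; once the $t$- and $\omega$-uniform, $\pi$-integrable majorant for the increments of $f$ is in place, everything else is immediate.
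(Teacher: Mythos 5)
Your proof is correct, and it is essentially the argument the paper has in mind: the paper labels Corollary~\ref{coro1} ``an obvious consequence of Lemma~\ref{le00}'' and supplies no further argument, so the real content is precisely the interchange of $L^{\pi}$ with $\int_{0}^{\infty}e^{-\lambda t}\mathbf{E}[\cdot]\,dt$ that you justify. Your majorant bookkeeping for $|\nabla_{y}^{\sigma}f(x+Z_{t}^{\pi})|$ is exactly what is needed for Fubini, using the structural facts about $\pi\in\mathfrak{A}^{\sigma}$ (namely $\int_{|y|\leq 1}|y|\,d\pi<\infty$ for $\sigma<1$, $\int_{|y|\leq 1}|y|^{2}\,d\pi<\infty$ and $\pi(\{|y|>1\})<\infty$ always, and $\int_{|y|>1}|y|\,d\pi<\infty$ for $\sigma\in(1,2)$), and the concluding identity $(L^{\pi}-\lambda)u=-f$ is then just Lemma~\ref{le00} restated.
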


\subsection{$L_{2}$-estimates}

We derive first some $L_{2}$-estimates independent of the regularity of $f$.
Given $\pi \in \mathfrak{A}^{\sigma }$, let 
\begin{equation*}
\tilde{\psi}=\tilde{\psi}^{\pi }=\func{Re}\psi ^{\pi }\left( \xi \right)
=\int \left[ \cos \left( 2\pi y\cdot \xi \right) -1\right] \pi \left(
dy\right) ,\xi \in \mathbf{R}^{d}.
\end{equation*}%
Let for $v\in \tilde{C}^{\infty }\left( \mathbf{R}^{d}\right) $ or $v\in 
\tilde{C}^{\infty }\left( E\right) $ we define 
\begin{equation*}
\left\vert v\right\vert _{\tilde{\psi},2}=\left\vert \mathcal{F}^{-1}\tilde{%
\psi}^{\pi }\hat{v}\right\vert _{L_{2}\left( \mathbf{R}^{d}\right) }\text{
or }\left\vert v\right\vert _{\tilde{\psi},2;E}=\left\vert \mathcal{F}^{-1}%
\tilde{\psi}^{\pi }\hat{v}\right\vert _{L_{2}\left( E\right) };
\end{equation*}%
in the case $v\in \tilde{C}^{\infty }\left( E\right) $, $\hat{v}$ denotes
Fourier transform in $x$.

\begin{lemma}
\label{le10}Let $\pi \in \mathfrak{A}^{\sigma }.$

a) Let $f\in \tilde{C}^{\infty }(E)$ and $u\in \tilde{C}^{\infty }(E)$ be
the unique solution to (\ref{1}), defined in Lemma \ref{le0}. Then 
\begin{equation}
\left\vert u\right\vert _{\tilde{\psi},2;E}\leq \left\vert f\right\vert
_{L_{2}(T)}.  \label{eq:h5}
\end{equation}

Moreover, 
\begin{equation}
\left\vert u\right\vert _{L_{2}(T)}\leq \left( \lambda ^{-1}\wedge T\right)
\left\vert f\right\vert _{L_{2}(T)},\left\vert u\left( t\right) \right\vert
_{L_{2}\left( \mathbf{R}^{d}\right) }\leq \int_{0}^{t}\left\vert f\left(
s\right) \right\vert _{L_{2}\left( \mathbf{R}^{d}\right) }ds,t\geq 0,
\label{eq:h6}
\end{equation}%
and for $t\geq 0,$%
\begin{equation}
\int_{0}^{t}\int \int \left[ u\left( s,x+y\right) -u\left( s,x\right) \right]
^{2}\pi \left( dy\right) dxds\leq \left( \int_{0}^{t}\left\vert f\left(
s\right) \right\vert _{L_{2}\left( \mathbf{R}^{d}\right) }ds\right) ^{2}.
\label{h80}
\end{equation}

b) Let $f\in \tilde{C}^{\infty }(\mathbf{R}^{d})$ and $v\in \tilde{C}%
^{\infty }(\mathbf{R}^{d})$ be the unique solution to (\ref{2'}), defined in
Lemma \ref{le00}. Then 
\begin{equation}
\left\vert v\right\vert _{\tilde{\psi},2}\leq \left\vert f\right\vert
_{L_{2}(\mathbf{R}^{d})}.
\end{equation}

Moreover, 
\begin{equation}
\left\vert v\right\vert _{L_{2}(\mathbf{R}^{d})}\leq \left( 1/\lambda
\right) \left\vert f\right\vert _{L_{2}(\mathbf{R}^{d})},  \label{h70}
\end{equation}%
and%
\begin{equation}
\int \int \left[ v\left( x+y\right) -v\left( x\right) \right] ^{2}\pi \left(
dy\right) dx\leq \frac{2}{\lambda }\left\vert f\right\vert _{L_{2}\left( 
\mathbf{R}^{d}\right) }^{2}.  \label{h71}
\end{equation}
\end{lemma}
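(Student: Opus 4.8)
I will run the whole argument on the Fourier side in the space variable. Write $\hat w$ for the Fourier transform in $x$ and $\tilde\psi=\tilde\psi^{\pi}=\func{Re}\psi^{\pi}$, so that $-\tilde\psi(\xi)=\int[1-\cos(2\pi\xi\cdot y)]\pi(dy)=\tfrac12\int|e^{i2\pi\xi\cdot y}-1|^2\pi(dy)\ge 0$ and $\widehat{L^{\pi}w}(\xi)=\psi^{\pi}(\xi)\hat w(\xi)$. Since $f,u,v\in\tilde C^{\infty}$, Parseval's identity applies; in particular, for $w=u(s,\cdot)$ or $w=v$,
$$\int\!\!\int[w(x+y)-w(x)]^2\pi(dy)\,dx=\int\left(\int|e^{i2\pi\xi\cdot y}-1|^2\pi(dy)\right)|\hat w(\xi)|^2\,d\xi=2\int|\tilde\psi(\xi)|\,|\hat w(\xi)|^2\,d\xi.$$
The two $L_2$-bounds $(\ref{eq:h6})$ and $(\ref{h70})$ are just $(\ref{h5})$, $(\ref{h40})$ and $(\ref{h6})$ with $\gamma=0$, $p=2$, so nothing new is needed for them; what remains is $(\ref{eq:h5})$, $(\ref{h80})$, $(\ref{h71})$ and the $\tilde\psi$-bound in b).

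Taking the Fourier transform in $x$ of the representation formulas $(\ref{h4})$ and $(\ref{h60})$ and using $\mathbf{E}e^{i2\pi\xi\cdot Z_t^{\pi}}=e^{\psi^{\pi}(\xi)t}$ from $(\ref{f12})$ gives, for each $\xi$,
$$\hat u(t,\xi)=\int_0^t e^{(\psi^{\pi}(\xi)-\lambda)(t-s)}\hat f(s,\xi)\,ds,\qquad \hat v(\xi)=\frac{\hat f(\xi)}{\lambda-\psi^{\pi}(\xi)},$$
the second making sense because $\func{Re}(\lambda-\psi^{\pi}(\xi))=\lambda+|\tilde\psi(\xi)|\ge\lambda>0$. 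From $(\lambda-\psi^{\pi}(\xi))\hat v(\xi)=\hat f(\xi)$, multiplying by $\overline{\hat v(\xi)}$, taking real parts, integrating over $\xi$ and using Parseval together with $(\ref{h70})$,
$$\lambda|v|_{L_2}^2+\tfrac12\int\!\!\int[v(x+y)-v(x)]^2\pi(dy)\,dx=\int vf\,dx\le|v|_{L_2}|f|_{L_2}\le\tfrac{1}{\lambda}|f|_{L_2}^2,$$
and dropping the first term gives $(\ref{h71})$. For the parabolic case, differentiating the explicit integral for $\hat u$ gives $\partial_t\hat u=(\psi^{\pi}-\lambda)\hat u+\hat f$, hence $\partial_t|\hat u(t,\xi)|^2+2|\tilde\psi(\xi)||\hat u(t,\xi)|^2+2\lambda|\hat u(t,\xi)|^2=2\func{Re}(\overline{\hat u(t,\xi)}\hat f(t,\xi))$; integrating over $\xi$ and over $[0,t]$ (with $\hat u(0,\xi)=0$) and using Parseval,
$$|u(t)|_{L_2}^2+\int_0^t\!\!\int\!\!\int[u(s,x+y)-u(s,x)]^2\pi(dy)\,dx\,ds+2\lambda\int_0^t|u(s)|_{L_2}^2\,ds=2\int_0^t\!\!\int uf\,dx\,ds.$$
The right-hand side is $\le 2\int_0^t|u(s)|_{L_2}|f(s)|_{L_2}\,ds\le 2\int_0^t F(s)F'(s)\,ds=F(t)^2$, where $F(s)=\int_0^s|f(r)|_{L_2}\,dr$ and we used $(\ref{h40})$ (with $\gamma=0$) in the form $|u(s)|_{L_2}\le F(s)$; dropping the nonnegative terms on the left yields $(\ref{h80})$.

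It remains to estimate the $\tilde\psi$-norms. For the elliptic problem, $\tilde\psi(\xi)\hat v(\xi)=\dfrac{\tilde\psi(\xi)}{\lambda-\psi^{\pi}(\xi)}\hat f(\xi)$ with $|\lambda-\psi^{\pi}(\xi)|\ge\lambda+|\tilde\psi(\xi)|\ge|\tilde\psi(\xi)|$, so $|\tilde\psi(\xi)\hat v(\xi)|\le|\hat f(\xi)|$ pointwise and $|v|_{\tilde\psi,2}=|\tilde\psi\hat v|_{L_2}\le|\hat f|_{L_2}=|f|_{L_2}$. For the parabolic problem, fix $\xi$ and extend $\hat f(\cdot,\xi)$ by zero to $s<0$; then $\tilde\psi(\xi)\hat u(\cdot,\xi)=K_{\xi}*\hat f(\cdot,\xi)$ (convolution in time) with $K_{\xi}(r)=1_{\{r>0\}}\tilde\psi(\xi)e^{(\psi^{\pi}(\xi)-\lambda)r}$, and
$$|K_{\xi}|_{L_1(\mathbf{R})}=|\tilde\psi(\xi)|\int_0^{\infty}e^{(\tilde\psi(\xi)-\lambda)r}\,dr=\frac{|\tilde\psi(\xi)|}{\lambda+|\tilde\psi(\xi)|}\le 1$$
(the quotient read as $0$ when $\tilde\psi(\xi)=0$, in which case $K_{\xi}\equiv 0$). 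Young's convolution inequality gives $|\tilde\psi(\xi)\hat u(\cdot,\xi)|_{L_2(0,T)}\le|K_{\xi}|_{L_1(\mathbf{R})}|\hat f(\cdot,\xi)|_{L_2(0,T)}\le|\hat f(\cdot,\xi)|_{L_2(0,T)}$; squaring, integrating over $\xi$ and using Parseval,
$$|u|_{\tilde\psi,2;E}^2=\int_{\mathbf{R}^d}\int_0^T|\tilde\psi(\xi)\hat u(t,\xi)|^2\,dt\,d\xi\le\int_{\mathbf{R}^d}\int_0^T|\hat f(t,\xi)|^2\,dt\,d\xi=|f|_{L_2(T)}^2,$$
which is $(\ref{eq:h5})$. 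The one point worth flagging is this last estimate: the energy identity of the previous paragraph only controls $\int\!\int|\tilde\psi||\hat u|^2$, i.e. the first power of $\tilde\psi$, whereas $(\ref{eq:h5})$ needs the second power; it is the parabolic-smoothing bound $\int_0^{\infty}|\tilde\psi|e^{(\tilde\psi-\lambda)r}\,dr\le 1$, visible in the Duhamel formula but not in the energy identity, that closes the gap.
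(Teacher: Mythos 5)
Your proof is correct and follows essentially the same route as the paper: both pass to Fourier space in $x$, use the explicit Duhamel/resolvent formulas for $\hat u$ and $\hat v$, and derive the $\tilde\psi$-bounds from the key estimate $\int_0^\infty|\tilde\psi(\xi)|e^{(\tilde\psi(\xi)-\lambda)r}\,dr\le 1$ (you phrase it as Young's convolution inequality; the paper phrases the same estimate as H\"older followed by Fubini). The only cosmetic difference is that you also derive the energy identities (\ref{h80}) and (\ref{h71}) on the Fourier side, while the paper obtains them in physical space by multiplying the PDE by $u$ (resp.\ $v$) and using the pointwise identity (\ref{g2}); these are equivalent via Plancherel and give identical bounds.
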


\begin{proof}
a) Let $f\in \tilde{C}^{\infty }(E)$. Taking Fourier transform in $x$ in the
representation (\ref{h4}) we find that%
\begin{equation*}
\hat{u}\left( t,\xi \right) =\int_{0}^{t}e^{-\lambda (t-s)}\exp \left\{ \psi
^{\pi }\left( \xi \right) \left( t-s\right) \right\} \hat{f}\left( s,\xi
\right) ds,\left( t,\xi \right) \in E.
\end{equation*}%
Hence, by H\"{o}lder inequality, for any $\left( t,\xi \right) \in E,$%
\begin{equation*}
\left\vert \tilde{\psi}^{\pi }\left( \xi \right) \hat{u}\left( t,\xi \right)
\right\vert ^{2}\leq \left\vert \tilde{\psi}^{\pi }\left( \xi \right)
\right\vert \int_{0}^{t}\exp \left\{ \tilde{\psi}^{\pi }\left( \xi \right)
\left( t-s\right) \right\} \left\vert \hat{f}\left( s,\xi \right)
\right\vert ^{2}ds,
\end{equation*}%
and, by Fubini theorem, 
\begin{equation*}
\left\vert \tilde{\psi}^{\pi }\hat{u}\right\vert _{L_{2}\left( T\right)
}^{2}\leq \left\vert \hat{f}\right\vert _{L_{2}\left( T\right) }.
\end{equation*}

The inequality (\ref{eq:h6}) is derived in Lemma \ref{le0}. We derive the
remaining inequalities using chain rule and integrating. For any $\left(
t,x\right) \in E,$%
\begin{equation}
u(t,x)^{2}=2\int_{0}^{t}u(s,x)\left[ L^{\pi }u(s,x)-\lambda u(s,x)+f(s,x)%
\right] ds.  \label{g1}
\end{equation}%
Now, for any $\left( s,x\right) \in E,y\in \mathbf{R}^{d},$ 
\begin{eqnarray}
&&2u\left( s,x\right) \left[ u(s,x+y\right] -u\left( s,x\right) -\chi
_{\sigma }\left( y\right) y\cdot \nabla u\left( s,x\right) ]  \notag \\
&=&-\left[ u\left( s,x+y\right) -u\left( s,x\right) \right] ^{2}  \label{g2}
\\
&&+u\left( s,x+y\right) ^{2}-u\left( s,x\right) ^{2}-\chi _{\sigma }\left(
y\right) y\cdot \nabla \lbrack u\left( s,x\right) ^{2}].  \notag
\end{eqnarray}%
Using (\ref{g2}) and integrating both sides of (\ref{g1}) in $x,$ we have
for any $t\in \left[ 0,T\right] ,$%
\begin{eqnarray*}
&&\left\vert u\left( t\right) \right\vert _{L_{2}\left( \mathbf{R}%
^{d}\right) }^{2}+\int_{0}^{t}\int \int \left[ u\left( s,x+y\right) -u\left(
s,x\right) \right] ^{2}\pi \left( dy\right) dxds \\
&\leq &2\int_{0}^{t}\int f\left( s,x\right) u\left( s,x\right) dxds\leq
2\int_{0}^{t}\left\vert f\left( s\right) \right\vert _{L_{2}\left( \mathbf{R}%
^{d}\right) }\left\vert u\left( s\right) \right\vert _{L_{2}\left( \mathbf{R}%
^{d}\right) } \\
&\leq &\left( \int_{0}^{t}\left\vert f\left( s\right) \right\vert
_{L_{2}\left( \mathbf{R}^{d}\right) }ds\right) ^{2}
\end{eqnarray*}%
and (\ref{h80}) follows.

b) Let $f\in \tilde{C}^{\infty }(\mathbf{R}^{d})$. Taking Fourier transform
in (\ref{h60}) we find that%
\begin{equation*}
\hat{v}(\xi )=\int_{0}^{\infty }e^{-\lambda t}\exp \left\{ \psi ^{\pi
}\left( \xi \right) t\right\} \hat{f}\left( \xi \right) dt,\xi \in \mathbf{R}%
^{d}.
\end{equation*}%
Hence for any $\xi \in \mathbf{R}^{d},$%
\begin{equation*}
\left\vert \tilde{\psi}^{\pi }\left( \xi \right) \hat{v}\left( \xi \right)
\right\vert ^{2}\leq \left\vert \tilde{\psi}^{\pi }\left( \xi \right)
\right\vert \int_{0}^{\infty }\exp \left\{ \tilde{\psi}^{\pi }\left( \xi
\right) t\right\} \left\vert \hat{f}\left( \xi \right) \right\vert
^{2}dt\leq \left\vert \hat{f}\left( \xi \right) \right\vert ^{2},
\end{equation*}%
and%
\begin{equation*}
\left\vert \tilde{\psi}^{\pi }\hat{v}\right\vert _{L_{2}\left( \mathbf{R}%
^{d}\right) }^{2}\leq \left\vert \hat{f}\right\vert _{L_{2}\left( \mathbf{R}%
^{d}\right) }.
\end{equation*}

Inequality (\ref{h70}) was derived in Lemma \ref{le00}. Multiplying both
sides of (\ref{2}) by $2v$ and integrating as in the part a), we have%
\begin{eqnarray*}
&&\int \int \left[ v\left( x+y\right) -v\left( x\right) \right] ^{2}\pi
\left( dy\right) dx \\
&\leq &2\left\vert v\right\vert _{L_{2}\left( \mathbf{R}^{d}\right)
}\left\vert f\right\vert _{L_{2}\left( \mathbf{R}^{d}\right) }\leq \frac{2}{%
\lambda }\left\vert f\right\vert _{L_{2}\left( \mathbf{R}^{d}\right) }^{2}.
\end{eqnarray*}
\end{proof}

\begin{remark}
\label{rm1}For $v\in \tilde{C}^{\infty }\left( \mathbf{R}^{d}\right) ,$ by
Plancherel's equality,%
\begin{eqnarray*}
\int \int \left[ v\left( x+y\right) -v\left( x\right) \right] ^{2}\pi \left(
dy\right) dx &=&\int \int |e^{i2\pi \xi \cdot y}-1|^{2}\pi \left( dy\right) |%
\hat{v}\left( \xi \right) |^{2}d\xi \\
&=&2\int \left( -\tilde{\psi}^{\pi }\left( \xi \right) \right) \left\vert 
\hat{v}\left( \xi \right) \right\vert ^{2}d\xi \\
&=&2\left\vert \sqrt{-\tilde{\psi}}\hat{v}\right\vert _{L_{2}\left( \mathbf{R%
}^{d}\right) }^{2}=2\left\vert v\right\vert _{\left( -\tilde{\psi}\right)
^{1/2},2}^{2},
\end{eqnarray*}%
where $\left\vert v\right\vert _{\left( -\tilde{\psi}\right)
^{1/2},2}=\left\vert \mathcal{F}^{-1}\sqrt{-\tilde{\psi}}\hat{v}\right\vert
_{L_{2}\left( \mathbf{R}^{d}\right) }.$
\end{remark}

\subsection{Continuity of $L^{\protect\pi }$ and proof of Theorem \protect
\ref{thm:main}}

We show first the $L_{2}$ continuity as follows.

\begin{lemma}
\label{le2}Let $\pi ,\pi _{0}\in \mathfrak{A}^{\sigma }$. Assume there is $%
C_{0}>0$ so that 
\begin{equation*}
\left\vert \psi ^{\pi }\left( \xi \right) \right\vert \leq C_{0}\left\vert
\psi ^{\pi _{0}}\left( \xi \right) \right\vert ,\xi \in \mathbf{R}^{d}.
\end{equation*}%
Then%
\begin{equation}
\left\vert L^{\pi }\varphi \right\vert _{L_{2}(\mathbf{R}^{d})}\leq
C_{0}\left\vert L^{\pi _{0}}\varphi \right\vert _{L_{2}(\mathbf{R}%
^{d})},\varphi \in \tilde{C}^{\infty }(\mathbf{R}^{d}).
\end{equation}
\end{lemma}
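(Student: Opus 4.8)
The plan is to prove this by a direct application of Plancherel's identity, exploiting that $L^{\pi}$ and $L^{\pi_0}$ act on $\tilde{C}^{\infty}(\mathbf{R}^{d})$ as Fourier multiplier operators with symbols $\psi^{\pi}$ and $\psi^{\pi_0}$, respectively.

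First I would record the elementary facts about $\varphi\in\tilde{C}^{\infty}(\mathbf{R}^{d})$: by definition $\varphi,\nabla\varphi,D^{2}\varphi$ all lie in $L_{1}(\mathbf{R}^{d})\cap L_{2}(\mathbf{R}^{d})\cap L_{\infty}(\mathbf{R}^{d})$, hence $\hat{\varphi}$ is a bounded continuous function belonging to $L_{2}(\mathbf{R}^{d})$, and, splitting the defining integral of $L^{\pi}\varphi$ into $\{|y|\le 1\}$ and $\{|y|>1\}$ and using the Taylor estimate $|\varphi(x+y)-\varphi(x)-\chi_{\sigma}(y)\nabla\varphi(x)\cdot y|\le C(|y|^{2}\wedge 1)$ against $\int(|y|^{2}\wedge 1)\,\pi(dy)<\infty$ (with the obvious modification when $\chi_{\sigma}\equiv 0$, using instead $\int_{|y|\le 1}|y|\,\pi(dy)<\infty$, together with $\int_{|y|>1}|y|\,\pi(dy)<\infty$ when $\sigma\in(1,2)$ and $\pi(\{|y|>1\})<\infty$ always), one gets $L^{\pi}\varphi\in L_{1}(\mathbf{R}^{d})\cap L_{2}(\mathbf{R}^{d})$, and likewise $L^{\pi_0}\varphi\in L_{1}(\mathbf{R}^{d})\cap L_{2}(\mathbf{R}^{d})$ (this is the computation of Remark \ref{re2 copy(1)}).

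Next I would establish the multiplier identity $\widehat{L^{\pi}\varphi}(\xi)=\psi^{\pi}(\xi)\,\hat{\varphi}(\xi)$, $\xi\in\mathbf{R}^{d}$. For fixed $\xi$ the map $(x,y)\mapsto[\varphi(x+y)-\varphi(x)-\chi_{\sigma}(y)\nabla\varphi(x)\cdot y]\,e^{-i2\pi x\cdot\xi}$ is integrable for $dx\otimes\pi(dy)$, by exactly the bounds just described, so Fubini's theorem applies; using $\widehat{\varphi(\cdot+y)}(\xi)=e^{i2\pi y\cdot\xi}\hat{\varphi}(\xi)$ and $\widehat{\nabla\varphi\cdot y}(\xi)=i2\pi(\xi\cdot y)\hat{\varphi}(\xi)$ and then integrating in $y$ yields $\widehat{L^{\pi}\varphi}(\xi)=\psi^{\pi}(\xi)\hat{\varphi}(\xi)$, and the same identity holds with $\pi$ replaced by $\pi_0$.

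Finally, combining Plancherel's identity with the pointwise hypothesis $|\psi^{\pi}(\xi)|\le C_{0}|\psi^{\pi_0}(\xi)|$,
\begin{equation*}
\left\vert L^{\pi}\varphi\right\vert_{L_{2}(\mathbf{R}^{d})}^{2}=\int\left\vert\psi^{\pi}(\xi)\right\vert^{2}\left\vert\hat{\varphi}(\xi)\right\vert^{2}d\xi\le C_{0}^{2}\int\left\vert\psi^{\pi_0}(\xi)\right\vert^{2}\left\vert\hat{\varphi}(\xi)\right\vert^{2}d\xi=C_{0}^{2}\left\vert L^{\pi_0}\varphi\right\vert_{L_{2}(\mathbf{R}^{d})}^{2},
\end{equation*}
and taking square roots gives the assertion. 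No genuine difficulty arises here; the only step that requires attention is the Fubini interchange producing the multiplier identity, and that is routine once the $\tilde{C}^{\infty}$ bounds on $\varphi$ and the integrability properties of $\pi,\pi_0\in\mathfrak{A}^{\sigma}$ are in hand.
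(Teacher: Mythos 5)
Your proof is correct and takes essentially the same approach as the paper: Plancherel's identity combined with the multiplier representation $\widehat{L^{\pi}\varphi}=\psi^{\pi}\hat{\varphi}$. You simply spell out the Fubini justification for that identity, which the paper leaves implicit.
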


\begin{proof}
Let $\varphi \in \tilde{C}^{\infty }\left( \mathbf{R}^{d}\right) $. By
Plancherel equality,%
\begin{equation*}
\left\vert L^{\pi }\varphi \right\vert _{L_{2}(\mathbf{R}^{d})}=\left\vert
\psi ^{\pi }\hat{\varphi}\right\vert _{L_{2}\left( \mathbf{R}^{d}\right)
}\leq C_{0}\left\vert \psi ^{\pi _{0}}\hat{\varphi}\right\vert _{L_{2}\left( 
\mathbf{R}^{d}\right) }=C_{0}\left\vert L^{\pi _{0}}\varphi \right\vert
_{L_{2}\left( \mathbf{R}^{d}\right) }.
\end{equation*}
\end{proof}

Let $\pi _{0}\in \mathfrak{A}^{\sigma },\kappa $ be a scaling function with
a scaling factor $l$ and \textbf{D}$\left( \kappa ,l\right) $(i)-(ii) hold
for $\pi _{0}$. Let $\gamma \left( t\right) =\inf \left( s>0:l\left(
s\right) >t\right) ,t>0.$ According to Lemma \ref{led}, for each $t>0$, the
associated Levy process $Z_{t}^{\pi _{0}}$ has a bounded probability density
function $p^{\pi _{0}}\left( t,x\right) ,x\in \mathbf{R}^{d}.$ Also, $\int
\left\vert \nabla p^{\pi _{0}}\left( t,x\right) \right\vert dx\leq C\gamma
\left( t\right) ^{-1},t>0,$ if $\sigma \in \left( 1,2\right) $.

Define for $v\in \tilde{C}^{\infty }\left( \mathbf{R}^{d}\right) ,\lambda
>0,\varepsilon \geq 0,$ 
\begin{eqnarray*}
K_{\lambda }^{\varepsilon }v\left( x\right) &=&\int_{\varepsilon }^{\infty
}e^{-\lambda t}\mathbf{E}v\left( x+Z_{t}^{\pi _{0}}\right)
dt=\int_{\varepsilon }^{\infty }\int e^{-\lambda t}v\left( x+y\right) p^{\pi
_{0}}\left( t,y\right) dydt \\
&=&\int v(x-y)\int_{\varepsilon }^{\infty }e^{-\lambda t}p^{\pi _{0}^{\ast
}}\left( t,y\right) dtdy \\
&=&\int v(y)\int_{\varepsilon }^{\infty }e^{-\lambda t}p^{\pi _{0}^{\ast
}}\left( t,x-y\right) dtdy,x\in \mathbf{R}^{d}.
\end{eqnarray*}%
Let%
\begin{eqnarray}
T_{\lambda }^{\varepsilon }v\left( x\right) &=&L^{\pi }K_{\lambda
}^{\varepsilon }v\left( x\right) =\int L^{\pi }v(x-y)\int_{\varepsilon
}^{\infty }e^{-\lambda t}p^{\pi _{0}^{\ast }}\left( t,y\right) dtdy  \notag
\\
&=&\int_{\varepsilon }^{\infty }e^{-\lambda t}\int L^{\pi }p^{\pi _{0}^{\ast
}}(t,x-y)v\left( y\right) dydt  \label{far1} \\
&=&\int m_{\lambda }^{\varepsilon }\left( x-y\right) v\left( y\right)
dy,v\in \tilde{C}^{\infty }\left( \mathbf{R}^{d}\right) ,  \notag
\end{eqnarray}%
with 
\begin{equation*}
m^{\varepsilon }\left( x\right) =m_{\lambda }^{\varepsilon }\left( x\right)
=\int_{\varepsilon }^{\infty }e^{-\lambda t}L^{\pi }p^{\pi _{0}^{\ast
}}(t,x)dt,x\in \mathbf{R}^{d}.
\end{equation*}%
Note that according to Lemma \ref{led}b) and Corollaries \ref{cor2} and \ref%
{cor1}, 
\begin{equation*}
\int \left\vert m_{\lambda }^{\varepsilon }\left( x\right) \right\vert
dx<\infty .
\end{equation*}

\begin{lemma}
\label{pro1}Let $\pi _{0}\in \mathfrak{A}^{\sigma },\kappa $ be a scaling
function with scaling factor $l,$ and \textbf{D}$\left( \kappa ,l\right) $
hold for $\pi _{0}$. Let $\pi \in \mathfrak{A}^{\sigma }$ be such that%
\begin{equation*}
\int_{\left\vert z\right\vert \leq 1}\left\vert z\right\vert ^{\alpha _{1}}%
\tilde{\pi}_{R}(dz)+\int_{\left\vert z\right\vert >1}\left\vert z\right\vert
^{\alpha _{2}}\tilde{\pi}_{R}(dz)\leq N\text{ }\forall R>0,
\end{equation*}
\emph{where }$\tilde{\pi}_{R}\left( dy\right) =\kappa \left( R\right) \pi
_{R}\left( dy\right) $, and $\alpha _{1},\alpha _{2}$ are exponents in
assumption \textbf{D}$\left( \kappa ,l\right) $ for $\pi _{0}$.

Then for each $p\in \left( 1,\infty \right) $ there is a constant $C=C\left(
d,p,\kappa ,l,N_{0},N,N_{1},c_{1}\right) $ such that%
\begin{equation*}
\left\vert T_{\lambda }^{\varepsilon }v\right\vert _{L_{p}}\leq C\left\vert
v\right\vert _{L_{p}},v\in \tilde{C}^{\infty }\left( \mathbf{R}^{d}\right) .
\end{equation*}
\end{lemma}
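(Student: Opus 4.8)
The plan is to realize $T_\lambda^\varepsilon$ as a convolution operator, $T_\lambda^\varepsilon v=m_\lambda^\varepsilon\ast v$ with $m_\lambda^\varepsilon\in L_1(\mathbf R^d)$ (finiteness of $\int|m_\lambda^\varepsilon|\,dx$ was noted just above, via Lemma \ref{led}(b) and Corollaries \ref{cor2}, \ref{cor1}), and to verify the hypotheses of the Calder\'on--Zygmund theorem with constants independent of $\varepsilon$ and $\lambda$. Two things are needed: (a) an $L_2$-bound $|T_\lambda^\varepsilon v|_{L_2}\le C|v|_{L_2}$, and (b) H\"ormander's cancellation condition $\sup_{h\neq0}\int_{|x|>2|h|}|m_\lambda^\varepsilon(x-h)-m_\lambda^\varepsilon(x)|\,dx\le C$. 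Granting these, the Calder\'on--Zygmund theorem yields the weak $(1,1)$ estimate; interpolation with the $L_2$-bound gives $L_p$-boundedness for $1<p\le2$, and duality (the transpose is convolution with $m_\lambda^\varepsilon(-\,\cdot)$, whose kernel satisfies (b) with $h\mapsto-h$) gives it for $2\le p<\infty$. All constants will turn out to depend only on $d,p,\kappa,l,N_0,N,N_1,c_1$.

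For (a) I would use Plancherel's identity. Taking Fourier transforms in $x$, and writing $K_\lambda^\varepsilon v=v\ast g$ with $g=\int_\varepsilon^\infty e^{-\lambda t}p^{\pi_0^\ast}(t,\cdot)\,dt$, one has $\hat g(\xi)=\int_\varepsilon^\infty e^{-\lambda t}e^{t\psi^{\pi_0}(\xi)}\,dt$ and $\widehat{L^\pi w}=\psi^\pi\hat w$, so
\[
\bigl|\widehat{T_\lambda^\varepsilon v}(\xi)\bigr|\le\frac{|\psi^\pi(\xi)|}{-\tilde\psi^{\pi_0}(\xi)}\,|\hat v(\xi)|,
\]
using $\operatorname{Re}\psi^{\pi_0}=\tilde\psi^{\pi_0}\le0$. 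Because \textbf{D}$(\kappa,l)$ holds for $\pi$ \emph{and} for $\pi_0$ with the same scaling function $\kappa$, Lemma \ref{le5} applies to both: part (a) (for $\pi$) gives $|\psi^\pi(\xi)|\le C\kappa(|\xi|^{-1})^{-1}$, and part (b) (for $\pi_0$) gives $-\tilde\psi^{\pi_0}(\xi)\ge c\,\kappa(|\xi|^{-1})^{-1}$ (cf.\ Corollary \ref{c1}). Hence the multiplier is bounded, and (a) follows.

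For (b), the substantive step, write $m_\lambda^\varepsilon(x)=\int_\varepsilon^\infty e^{-\lambda t}L^\pi p^{\pi_0^\ast}(t,x)\,dt$, noting that $\pi_0^\ast$ also satisfies \textbf{D}$(\kappa,l)$ with the same parameters, so Lemmas \ref{led} and \ref{ld1} are available for $p^{\pi_0^\ast}$. I would first reduce to $|h|=1$: the change of variables $y\mapsto y/|h|$ replaces $\pi,\pi_0$ by $\tilde\pi_{|h|},(\widetilde{\pi_0})_{|h|}$ and $\lambda,\varepsilon$ by $\lambda\kappa(|h|),\varepsilon/\kappa(|h|)$, and turns $\int_{|x|>2|h|}|m_\lambda^\varepsilon(x-h)-m_\lambda^\varepsilon(x)|\,dx$ into the same integral for the rescaled kernel and the unit vector $e=h/|h|$. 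Crucially \textbf{D}$(\kappa,l)$ is scale invariant: $\tilde\pi_{|h|}$ satisfies \textbf{D}$(\kappa',l)$ with $\kappa'(R)=\kappa(|h|R)/\kappa(|h|)$ --- the same scaling factor $l$, hence the same $\gamma=l^{-1}$ --- with the same $\alpha_1,\alpha_2,N,N_0,c_1,N_1$ and the same auxiliary measure $\mu^0$, and the only values of $\kappa'$ entering the estimates (taken at fixed radii) are controlled by $l$ alone. It then suffices to bound, uniformly over the data and over $\lambda>0$, $\varepsilon\ge0$ and unit vectors $e$, the quantity $\int_{|u|>2}|m_\lambda^\varepsilon(u-e)-m_\lambda^\varepsilon(u)|\,du$. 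Now split the $t$-integral at $I_1=\{t:\gamma(t)\le1\}$ and $I_2=\{t:\gamma(t)>1\}$, i.e.\ according to whether the characteristic scale $\gamma(t)$ of $Z_t^{\pi_0^\ast}$ is at most $1=|e|$ or larger. On $I_1$ (process finer than $|e|$) bound the difference by $|L^\pi p^{\pi_0^\ast}(t,u-e)|+|L^\pi p^{\pi_0^\ast}(t,u)|$ and, after a fixed rescaling, apply Lemma \ref{ld1}(a) with Lemma \ref{led}(c) to get $\int_{|u|>1}|L^\pi p^{\pi_0^\ast}(t,u)|\,du\le C[\,1+1_{\sigma\in(1,2)}\gamma(t)^{-1}+\gamma(t)^{-\alpha_1}(\gamma(t)^2+t)\,]$; over the \emph{bounded} interval $I_1$, on which $\gamma$ is bounded, the terms $1$ and $\gamma(t)^{2-\alpha_1}$ are trivially integrable, while $\int_{I_1}[\,t\gamma(t)^{-\alpha_1}+1_{\sigma\in(1,2)}\gamma(t)^{-1}\,]\,dt\le N_1$ by \textbf{D}$(\kappa,l)$(iii). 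On $I_2$ (process coarser than $|e|$) use a mean-value bound $|m_\lambda^\varepsilon(u-e)-m_\lambda^\varepsilon(u)|\le\int_0^1|\nabla m_\lambda^\varepsilon(u-\theta e)|\,d\theta$ and, where a single gradient is not enough, the parabolic trick of replacing the spatial increment by a time increment via $\partial_tp^{\pi_0^\ast}=L^{\pi_0^\ast}p^{\pi_0^\ast}$ (producing terms of the form $L^\pi L^{0,R}p^R$); Lemma \ref{ld1}(b)--(c) and Lemma \ref{led}(b) control the resulting $L_1$-norms by $C\gamma(t)^{-(1+\alpha_2)}$ and $C\gamma(t)^{-2\alpha_2}$, which integrate over $I_2$ by the second half of \textbf{D}$(\kappa,l)$(iii). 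Since $e^{-\lambda t}\le1$ and the cut-off at $\varepsilon$ only shrink the integrand, the bounds are uniform in $\lambda,\varepsilon$.

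The main obstacle is exactly the bookkeeping in (b): the argument must be carried out separately for $\sigma\in(0,1)$, $\sigma=1$ and $\sigma\in(1,2)$ --- the cut-offs $\chi_\sigma$, the admissible exponents $(\alpha_1,\alpha_2)$, and the relevant derivative estimates for $p^R$ all depend on $\sigma$ --- and every term produced by Lemmas \ref{ld1} and \ref{led} must be matched to the correct piece of \textbf{D}$(\kappa,l)$(iii); for $\sigma=1$ one must in addition use the centering conditions built into $\mathfrak A^\sigma$ and into \textbf{D}$(\kappa,l)$(i) to check that the rescalings $y\mapsto y/R$ genuinely intertwine $L^\pi$ with $L^{\tilde\pi_R}$ (the $\chi_\sigma$-cut-offs at radii $1$ and $1/R$ differ by a term that vanishes by the symmetry of $\pi$). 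With (a) and (b) in hand, the Calder\'on--Zygmund theorem gives the weak $(1,1)$ estimate, and the stated $L_p$-bound follows for every $p\in(1,\infty)$.
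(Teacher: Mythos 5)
Your proof follows the same route as the paper's: the $L_2$ bound via the Fourier multiplier $\psi^\pi/\tilde\psi^{\pi_0}$ (Lemma \ref{le5} and Corollary \ref{c1}), a rescaling $x\mapsto Rx$, $R=|h|$, that reduces the H\"ormander condition to a uniform-in-$R$ estimate for $L^{\tilde\pi_R}p^{*R}(t,\cdot)$, the split at $I_1=\{\gamma(t)\le1\}$, $I_2=\{\gamma(t)>1\}$, Lemma \ref{ld1}(a) plus \textbf{D}(iii) on $I_1$, and the mean-value/gradient bound from Lemma \ref{ld1}(b) on $I_2$; the duality step you describe for $p>2$ is exactly what underlies the cited convolution version of Calder\'on--Zygmund.

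One caveat: the parenthetical about \emph{``replacing the spatial increment by a time increment via $\partial_t p^{\pi_0^\ast}=L^{\pi_0^\ast}p^{\pi_0^\ast}$, producing $L^\pi L^{0,R}p^R$, controlled by Lemma \ref{ld1}(c)''} does not belong in this lemma. Here the kernel $m_\lambda^\varepsilon$ is a function on $\mathbf R^d$ alone (the $t$-variable has already been integrated out), so the H\"ormander condition involves a spatial translation only, and the gradient bound of Lemma \ref{ld1}(b) is all that is needed on $I_2$. The $\partial_t=L$ trick and the $L^{\tilde\pi_R}L^{0,R}p^R$ estimate enter only in the parabolic Claim \ref{cl1}, where the translation is in $(t,x)$ jointly; invoking them here is harmless but is not part of the argument for Lemma \ref{pro1}.
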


\begin{proof}
1. First we prove the statement for $p=2$. Observe that%
\begin{equation*}
\widehat{m_{\lambda }^{\varepsilon }}\left( \xi \right) =\psi ^{\pi }\left(
\xi \right) \int_{\varepsilon }^{\infty }\exp \left\{ \psi ^{\pi _{0}^{\ast
}}\left( \xi \right) t-\lambda t\right\} dt,\xi \in \mathbf{R}^{d}.
\end{equation*}%
Hence by Lemma \ref{le5} and Corollary \ref{c1} there is $C=C\left(
N,c_{1}\right) $ such that 
\begin{equation*}
\left\vert \widehat{m_{\lambda }^{\varepsilon }}\left( \xi \right)
\right\vert \leq CN_{2}\left\vert \psi ^{\pi _{0}^{\ast }}\left( \xi \right)
\right\vert \int_{0}^{\infty }\exp \left\{ \tilde{\psi}^{\pi _{0}^{\ast
}}\left( \xi \right) t-\lambda t\right\} dt\leq CN_{2}
\end{equation*}%
for all $\xi \in \mathbf{R}^{d},$ and 
\begin{equation*}
\left\vert T_{\lambda }^{\varepsilon }v\right\vert _{L_{2}}\leq C\left\vert
v\right\vert _{L_{2}},v\in \tilde{C}^{\infty }\left( \mathbf{R}^{d}\right) .
\end{equation*}

2. Since we already have an $L_{2}$ -estimate, according to Theorem 3 of
Chapter I in \cite{stein1}, it suffices to show that 
\begin{equation}
\int_{|x|\geq 3|s|}\left\vert m_{\lambda }^{\varepsilon }(x-s)-m_{\lambda
}^{\varepsilon }(x)\right\vert dx\leq C,\text{ }\forall s\neq 0.
\label{far2}
\end{equation}

Let $s\neq 0,R=\left\vert s\right\vert $. Changing the variable in (\ref%
{far2}), we see that we have to prove that%
\begin{equation}
R^{d}\int_{|x|\geq 3}\left\vert m_{\lambda }^{\varepsilon }(R\left( x-\hat{s}%
\right) )-m_{\lambda }^{\varepsilon }(Rx)\right\vert dx\leq C,\text{ }%
\left\vert \hat{s}\right\vert =1,R>0.  \label{far3}
\end{equation}

Let $p^{\ast R}$ be the pdf corresponding to the Levy measure $\kappa \left(
R\right) \pi _{0}^{\ast }\left( Rdy\right) $, and let $L^{R}=L^{\tilde{\pi}%
_{R}}$ with $\tilde{\pi}_{R}=\kappa \left( R\right) \pi \left( Rdy\right) $.
For $R>0$, changing the variables of integration, we have%
\begin{eqnarray*}
m_{\lambda }^{\varepsilon }(Rx) &=&\int_{\varepsilon }^{\infty }e^{-\lambda
t}(\nabla _{y}^{\sigma }p^{\pi _{0}^{\ast }})\left( t,Rx\right) \pi
^{\varepsilon }\left( dy\right) dt \\
&=&\int_{\varepsilon /\kappa \left( R\right) }^{\infty }e^{-\lambda \kappa
\left( R\right) t}(\nabla _{Ry}^{\sigma }p^{\pi _{0}^{\ast }})\left( \kappa
\left( R\right) t,Rx\right) \widetilde{\pi }_{R}\left( dy\right) dt \\
&=&\int_{\varepsilon /\kappa \left( R\right) }^{\infty }e^{-\lambda \kappa
\left( R\right) t}R^{-d}(L^{R}p^{\ast R})\left( t,x\right) dt,x\in \mathbf{R}%
^{d}.
\end{eqnarray*}%
In order to prove (\ref{fa3}) it is enough to show that%
\begin{eqnarray}
&&\int_{\left\vert x\right\vert >3}\int_{0}^{\infty }\left\vert L^{R}p^{\ast
R}\left( t,x-\hat{s}\right) -L^{R}p^{\ast R}\left( t,x\right) \right\vert
dtdx  \label{far4} \\
&\leq &C,\left\vert \hat{s}\right\vert =1,R>0,  \notag
\end{eqnarray}%
with $C=C=C\left( d,p,\kappa ,l,N_{0},N,N_{1}\right) .$

Let $I_{1}=\{t>0:\gamma \left( t\right) \leq 1\},I_{2}=\left\{ t>0:\gamma
\left( t\right) >1\right\} $. Now,%
\begin{eqnarray*}
&&\int_{\left\vert x\right\vert >3}\int_{0}^{\infty }\left\vert L^{R}p^{\ast
R}\left( t,x-\hat{s}\right) -L^{R}p^{\ast R}\left( t,x\right) \right\vert
dtdx \\
&\leq &\int_{\left\vert x\right\vert >3}\int_{I_{1}}\left\vert L^{R}p^{\ast
R}\left( t,x-\hat{s}\right) -L^{R}p^{\ast R}\left( t,x\right) \right\vert
dtdx \\
&&+\int_{\left\vert x\right\vert >3}\int_{I_{2}}\left\vert L^{R}p^{\ast
R}\left( t,x-\hat{s}\right) -L^{R}p^{\ast R}\left( t,x\right) \right\vert
dtdx \\
&=&A_{1}+A_{2}.
\end{eqnarray*}

By Lemma \ref{ld1}, 
\begin{eqnarray*}
A_{1} &=&\int_{\left\vert x\right\vert >3}\int_{I_{1}}\left\vert
L^{R}p^{\ast R}\left( t,x-\hat{s}\right) -L^{R}p^{\ast R}\left( t,x\right)
\right\vert dtdx \\
&\leq &2\int_{\left\vert x\right\vert >2}\int_{I_{1}}\left\vert L^{R}p^{\ast
R}\left( t,x\right) \right\vert dtdx \\
&\leq &C\int_{I_{1}}\left( 1+t\gamma \left( t\right) ^{-\alpha
_{1}}+1_{\sigma \in \left( 1,2\right) }\gamma \left( t\right) ^{-1}\right)
dt\leq C.
\end{eqnarray*}%
By Fubini theorem, 
\begin{eqnarray*}
A_{2} &=&\int_{\left\vert x\right\vert >3}\int_{I_{2}}\left\vert
L^{R}p^{\ast R}\left( t,x-\hat{s}\right) -L^{R}p^{\ast R}\left( t,x\right)
\right\vert dtdx \\
&\leq &\int_{\left\vert x\right\vert >3}\int_{I_{2}}\int_{0}^{1}\left\vert
L^{R}\nabla p^{\ast R}\left( t,x-r\hat{s}\right) \right\vert drdtdx \\
&\leq &2\int_{I_{2}}\int_{\left\vert x\right\vert >2}\left\vert L^{R}\nabla
p^{\ast R}\left( t,x\right) \right\vert dxdt\leq 2\int_{I_{2}}\left\vert
L^{R}\nabla p^{\ast R}\left( t,\cdot \right) \right\vert _{L_{1}}dt.
\end{eqnarray*}%
By Lemma \ref{ld1},%
\begin{equation*}
\int_{I_{2}}\left\vert L^{R}\nabla p^{\ast R}\left( t,\cdot \right)
\right\vert _{L_{1}}dt\leq C\int_{I_{2}}\gamma \left( t\right) ^{-(1+\alpha
_{2})}dt\leq C.
\end{equation*}

The statement is proved.
\end{proof}

\begin{corollary}
\label{coro2}Let $\pi _{0}\in \mathfrak{A}^{\sigma },\kappa $ be a scaling
function with scaling factor $l,$ and \textbf{D}$\left( \kappa ,l\right) $
hold for $\pi _{0}$. Let $\pi \in \mathfrak{A}^{\sigma }$ be such that%
\begin{equation*}
\int_{\left\vert z\right\vert \leq 1}\left\vert z\right\vert ^{\alpha _{1}}%
\tilde{\pi}_{R}(dz)+\int_{\left\vert z\right\vert >1}\left\vert z\right\vert
^{\alpha _{2}}\tilde{\pi}_{R}(dz)\leq N\text{ }\forall R>0,
\end{equation*}
\emph{where }$\tilde{\pi}_{R}\left( dy\right) =\kappa \left( R\right) \pi
_{R}\left( dy\right) $, and $\alpha _{1},\alpha _{2}$ are exponents in
assumption \textbf{D}$\left( \kappa \right) $ for $\pi _{0}$. Let $v\in 
\tilde{C}^{\infty }\left( \mathbf{R}^{d}\right) $ and $u\in \tilde{C}%
^{\infty }\left( \mathbf{R}^{d}\right) $ be the unique solution to 
\begin{equation}
\left( L^{\pi _{0}}-\lambda \right) u=v\text{ in }\mathbf{R}^{d}.
\label{far9}
\end{equation}

Then for each $p\in \left( 1,\infty \right) $ there is a constant $C=C\left(
d,p,\kappa ,l,N_{0},N,N_{1},c_{1}\right) $ such that%
\begin{equation*}
\left\vert L^{\pi }u\right\vert _{L_{p}}\leq C\left\vert v\right\vert
_{L_{p}}.
\end{equation*}
\end{corollary}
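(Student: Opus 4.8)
The plan is to write $u$ explicitly through the probabilistic representation of Lemma \ref{le00} (equivalently Corollary \ref{coro1}), identify $L^{\pi }u$ with the singular integral operator $T_{\lambda }^{0}$ introduced right before Lemma \ref{pro1}, and then quote Lemma \ref{pro1} with $\varepsilon =0$.

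First I would apply Lemma \ref{le00} (or Corollary \ref{coro1}) with data $g=-v$: since $v\in \tilde{C}^{\infty }(\mathbf{R}^{d})$ and $\lambda >0$, the unique $\tilde{C}^{\infty }(\mathbf{R}^{d})$ solution of $(L^{\pi _{0}}-\lambda )u=v$, i.e. of (\ref{2'}) with right-hand side $-v$, is
\[
u(x)=-\int_{0}^{\infty }e^{-\lambda t}\mathbf{E}\,v\!\left( x+Z_{t}^{\pi _{0}}\right) dt=-K_{\lambda }^{0}v(x),\qquad x\in \mathbf{R}^{d}.
\]
Hence, by the very definition of $T_{\lambda }^{\varepsilon }$ in (\ref{far1}) at $\varepsilon =0$, we get $L^{\pi }u=-L^{\pi }K_{\lambda }^{0}v=-T_{\lambda }^{0}v$. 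The one thing to verify here is that the interchange of $L^{\pi }$ with the $t$- and $y$-integrations in (\ref{far1}) is licit at the endpoint $\varepsilon =0$; this is exactly the content of the remark following (\ref{far1}), combined with Lemma \ref{led}(b), Corollaries \ref{cor2} and \ref{cor1}, and assumption \textbf{D}$\left( \kappa ,l\right) $(iii) for $\pi _{0}$, which is what makes $\int |m_{\lambda }^{0}(x)|\,dx<\infty $.

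It then remains only to apply Lemma \ref{pro1} with $\varepsilon =0$. Its hypotheses hold verbatim: \textbf{D}$\left( \kappa ,l\right) $ holds for $\pi _{0}$, and $\pi $ satisfies the uniform moment bound $\int_{|z|\leq 1}|z|^{\alpha _{1}}\tilde{\pi}_{R}(dz)+\int_{|z|>1}|z|^{\alpha _{2}}\tilde{\pi}_{R}(dz)\leq N$ with the same exponents $\alpha _{1},\alpha _{2}$. Therefore there is $C=C(d,p,\kappa ,l,N_{0},N,N_{1},c_{1})$ with $\left\vert T_{\lambda }^{0}v\right\vert _{L_{p}}\leq C\left\vert v\right\vert _{L_{p}}$ for every $v\in \tilde{C}^{\infty }(\mathbf{R}^{d})$, whence $\left\vert L^{\pi }u\right\vert _{L_{p}}=\left\vert T_{\lambda }^{0}v\right\vert _{L_{p}}\leq C\left\vert v\right\vert _{L_{p}}$, which is the assertion. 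The only delicate point, and the one I would state explicitly, is that the Calder\'{o}n--Zygmund argument in the proof of Lemma \ref{pro1} really does cover $\varepsilon =0$: there the kernel differences are controlled by $\int_{I_{1}}\bigl( 1+t\gamma (t)^{-\alpha _{1}}+1_{\sigma \in (1,2)}\gamma (t)^{-1}\bigr) dt$ and $\int_{I_{2}}\gamma (t)^{-(1+\alpha _{2})}dt$, and these are finite, uniformly in $R$ and in the shift direction, precisely by \textbf{D}$\left( \kappa ,l\right) $(iii), with no use of $\varepsilon >0$ anywhere.
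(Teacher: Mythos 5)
Your proposal follows the same route as the paper: represent $L^{\pi }u$ through the resolvent formula from Lemma \ref{le00}/Corollary \ref{coro1}, identify it with the operator $T_{\lambda }^{\varepsilon }$ of (\ref{far1}), and invoke Lemma \ref{pro1}. That is correct in outline. The one inaccuracy is the assertion that $\int |m_{\lambda }^{0}(x)|\,dx<\infty $. This is false: by the scaling $p^{\pi _{0}^{\ast }}(t,x)=R^{-d}p^{\ast R}(1,x/R)$ with $\kappa (R)=t$, one has $|L^{\pi }p^{\pi _{0}^{\ast }}(t,\cdot )|_{L_{1}}=\kappa (R)^{-1}|L^{\tilde{\pi}_{R}}p^{\ast R}(1,\cdot )|_{L_{1}}\sim t^{-1}$ for small $t$, so $\int_{0}^{\cdot }e^{-\lambda t}|L^{\pi }p^{\pi _{0}^{\ast }}(t,\cdot )|_{L_{1}}\,dt$ diverges and $m_{\lambda }^{0}$ is a genuine singular kernel, not in $L_{1}$. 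This is precisely why the paper keeps $\varepsilon >0$ in (\ref{far1}) (where the remark about $\int |m_{\lambda }^{\varepsilon }|<\infty $ does apply) and only passes to the limit $\varepsilon \rightarrow 0$ after Lemma \ref{pro1}, using that the constant $C$ is uniform in $\varepsilon $. Fortunately your argument does not actually need the $L_{1}$ claim: the interchange $L^{\pi }K_{\lambda }^{0}v=\int_{0}^{\infty }e^{-\lambda t}\mathbf{E}\,L^{\pi }v(x+Z_{t}^{\pi _{0}})\,dt$ is already supplied by Corollary \ref{coro1} (a fact about the probabilistic resolvent, not about the kernel), and then one can either apply Lemma \ref{pro1} at $\varepsilon >0$ and pass to the limit as the paper does, or observe that the Calder\'{o}n--Zygmund estimates in Lemma \ref{pro1} are all performed for $t\in (0,\infty )$ uniformly in $\varepsilon $. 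So the slip is cosmetic, not structural, but you should not assert $m_{\lambda }^{0}\in L_{1}$.
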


\begin{proof}
Let $v\in \tilde{C}^{\infty }\left( \mathbf{R}^{d}\right) ,\lambda >0$.
There is a unique $u\in \tilde{C}^{\infty }\left( \mathbf{R}^{d}\right) $
solving (\ref{far9})$.$ According to Lemma \ref{le00},%
\begin{equation*}
L^{\pi }u(x)=\int_{0}^{\infty }e^{-\lambda t}\mathbf{E}L^{\pi }v\left(
x+Z_{t}^{\pi _{0}}\right) dt,x\in \mathbf{R}^{d}.
\end{equation*}%
By (\ref{far1}), 
\begin{eqnarray}
T_{\lambda }^{\varepsilon }v\left( x\right) &=&\int L^{\pi
}v(x-y)\int_{\varepsilon }^{\infty }e^{-\lambda t}p^{\pi _{0}^{\ast }}\left(
t,y\right) dtdy  \label{fara1} \\
&=&\int_{\varepsilon }^{\infty }e^{-\lambda t}\mathbf{E}L^{\pi }v\left(
x+Z_{t}^{\pi _{0}}\right) dt,x\in \mathbf{R}^{d}.  \notag
\end{eqnarray}%
By Lemma \ref{pro1}, for each $p\in \left( 1,\infty \right) $ there is a
constant $C=C=C\left( d,p,\kappa ,l,N_{0},N,N_{1},c_{1}\right) $ such that 
\begin{equation}
\left\vert T_{\lambda }^{\varepsilon }v\right\vert _{L_{p}}\leq C\left\vert
v\right\vert _{L_{p}}.  \label{fara2}
\end{equation}%
Passing to the limit in (\ref{fara2}) and (\ref{fara1}) as $\varepsilon
\rightarrow 0,$ we have%
\begin{equation*}
\left\vert L^{\pi }u\right\vert _{L_{p}}\leq C\left\vert v\right\vert
_{L_{p}}.
\end{equation*}
\end{proof}

Now we prove the continuity of $L^{\pi }$-norm.

\begin{proposition}
\label{pro2}Let $\pi _{0}\in \mathfrak{A}^{\sigma },\kappa $ be a scaling
function with scaling factor $l,$ and \textbf{D}$\left( \kappa ,l\right) $
hold for $\pi _{0}$. Let $\pi \in \mathfrak{A}^{\sigma }$ be such that%
\begin{equation*}
\int_{\left\vert z\right\vert \leq 1}\left\vert z\right\vert ^{\alpha _{1}}%
\tilde{\pi}_{R}(dz)+\int_{\left\vert z\right\vert >1}\left\vert z\right\vert
^{\alpha _{2}}\tilde{\pi}_{R}(dz)\leq N\text{ }\forall R>0,
\end{equation*}
\emph{where }$\tilde{\pi}_{R}\left( dy\right) =\kappa \left( R\right) \pi
_{R}\left( dy\right) $, and $\alpha _{1},\alpha _{2}$ are exponents in
assumption \textbf{D}$\left( \kappa \right) $ for $\pi _{0}$.

Then for each $p\in \left( 1,\infty \right) $ there is a constant $%
C=C=C\left( d,p,\kappa ,l,N_{0},N,N_{1},c_{1}\right) $ such that%
\begin{equation*}
\left\vert L^{\pi }f\right\vert _{L_{p}}\leq C\left\vert L^{\pi
_{0}}f\right\vert _{L_{p}},f\in \tilde{C}^{\infty }\left( \mathbf{R}%
^{d}\right) .
\end{equation*}
\end{proposition}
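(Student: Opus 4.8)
The plan is to realize an arbitrary $f\in\tilde C^{\infty}(\mathbf R^{d})$ as the $\pi_{0}$-resolvent applied to a suitable right-hand side and then to invoke Corollary~\ref{coro2}, exploiting the fact that the constant there does not depend on $\lambda$. Concretely, I would fix $\lambda>0$, set $v_{\lambda}:=(L^{\pi_{0}}-\lambda)f=L^{\pi_{0}}f-\lambda f$, observe that $f$ is then the (unique) $\tilde C^{\infty}$ solution of $(L^{\pi_{0}}-\lambda)u=v_{\lambda}$, apply Corollary~\ref{coro2}, and finally send $\lambda\downarrow0$ to eliminate the spurious $\lambda|f|_{L_{p}}$ term.

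First I would check $v_{\lambda}\in\tilde C^{\infty}(\mathbf R^{d})$: since $D^{\gamma}L^{\pi_{0}}f=L^{\pi_{0}}D^{\gamma}f$, splitting the integral defining $L^{\pi_{0}}$ at $|y|=1$ and using $\int(|y|^{2}\wedge1)\,d\pi_{0}<\infty$ (together with $\int_{|y|>1}|y|\,d\pi_{0}<\infty$ when $\sigma\in(1,2)$) shows $\sup_{x}|D^{\gamma}L^{\pi_{0}}f(x)|<\infty$, while $|D^{\gamma}L^{\pi_{0}}f|_{L_{p}}=|L^{\pi_{0}}D^{\gamma}f|_{L_{p}}\le C|D^{\gamma}f|_{2,p}<\infty$ for every $p$ by Remark~\ref{re2 copy(1)}; hence $L^{\pi_{0}}f$, and therefore $v_{\lambda}$, lies in $\tilde C^{\infty}(\mathbf R^{d})$. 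By construction $(L^{\pi_{0}}-\lambda)f=v_{\lambda}$, so by the uniqueness part of Lemma~\ref{le00} (applied to $\pi_{0}$) $f$ is the unique $\tilde C^{\infty}$ solution of this equation. Since the hypotheses of Corollary~\ref{coro2} coincide with those of the present proposition, it applies with $v=v_{\lambda}$ and $u=f$, giving for each $p\in(1,\infty)$
\[
|L^{\pi}f|_{L_{p}}\le C|v_{\lambda}|_{L_{p}}\le C|L^{\pi_{0}}f|_{L_{p}}+C\lambda|f|_{L_{p}},
\]
with $C=C(d,p,\kappa,l,N_{0},N,N_{1},c_{1})$ independent of $\lambda$.

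To finish, I would let $\lambda\downarrow0$: the left-hand side does not involve $\lambda$, and $C\lambda|f|_{L_{p}}\to0$ because $|f|_{L_{p}}<\infty$ (as $f\in\tilde C^{\infty}$), whence $|L^{\pi}f|_{L_{p}}\le C|L^{\pi_{0}}f|_{L_{p}}$. The substance of the argument is carried entirely by Corollary~\ref{coro2} — ultimately by the $\lambda$-uniform Calder\'on--Zygmund/H\"ormander estimate of Lemma~\ref{pro1} — so I do not expect a genuine obstacle here. The one point that must be watched is precisely that the constant in Corollary~\ref{coro2} is independent of $\lambda$ (it enters only through the convergent integrals $\int_{0}^{\infty}e^{-\lambda t}(\cdots)\,dt$, bounded uniformly in $\lambda$), which is what legitimizes the limit $\lambda\downarrow0$; a secondary, purely routine, verification is the claim $L^{\pi_{0}}f\in\tilde C^{\infty}(\mathbf R^{d})$ needed for Corollary~\ref{coro2} to apply in the first place.
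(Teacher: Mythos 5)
Your proposal is correct and follows exactly the paper's own argument: set $v=(L^{\pi_0}-\lambda)f$, apply Corollary~\ref{coro2} to conclude $|L^{\pi}f|_{L_p}\le C|v|_{L_p}$ with $C$ independent of $\lambda$, then send $\lambda\downarrow 0$. The extra steps you spell out (verifying $v\in\tilde C^\infty(\mathbf R^d)$, invoking uniqueness from Lemma~\ref{le00} to identify $f$ as the resolvent of $v$, and the triangle inequality before letting $\lambda\downarrow 0$) are implicit in the paper's terse proof.
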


\begin{proof}
Let $f\in \tilde{C}^{\infty }\left( \mathbf{R}^{d}\right) ,\lambda >0.$ Then 
$v=\left( L^{\pi _{0}}-\lambda \right) f\in \tilde{C}^{\infty }\left( 
\mathbf{R}^{d}\right) $, and, by Corollary \ref{coro2}, there is $C=C\left(
d,p,\kappa ,\mu ^{0},N_{2},N,C_{0}\right) $ such that%
\begin{equation*}
\left\vert L^{\pi }f\right\vert _{L_{p}}\leq C\left\vert v\right\vert
_{L_{p}}=C\left\vert \left( L^{\pi _{0}}-\lambda \right) f\right\vert
_{L_{p}}.
\end{equation*}%
Since $C$ does not depend on $\lambda >0$, the statement follows.
\end{proof}

\subsubsection{Proof of Theorem \protect\ref{thm:main}}

\emph{Existence. }Let $f\in L_{p}\left( \mathbf{R}^{d}\right) $. There is a
sequence $f_{n}\in C_{0}^{\infty }\left( \mathbf{R}^{d}\right) $ such that $%
f_{n}\rightarrow f$ in $L_{p}$. For each $n,$ there is unique $u_{n}\in 
\tilde{C}^{\infty }\left( \mathbf{R}^{d}\right) $ solving (\ref{2'}). Hence%
\begin{equation*}
\left( L^{\pi }-\lambda \right) \left( u_{n}-u_{m}\right) =f_{n}-f_{m}.
\end{equation*}%
By Corollary \ref{coro2} and Lemma \ref{le00},%
\begin{eqnarray*}
\left\vert L^{\pi _{0}}\left( u_{n}-u_{m}\right) \right\vert _{L_{p}} &\leq
&C\left\vert f_{n}-f_{m}\right\vert _{L_{p}}\rightarrow 0, \\
\left\vert u_{n}-u_{m}\right\vert _{L_{p}} &\leq &\frac{1}{\lambda }%
\left\vert f_{n}-f_{m}\right\vert _{L_{p}}\rightarrow 0,
\end{eqnarray*}%
as $n,m\rightarrow \infty $. Hence there is $u\in H_{p}^{\pi _{0}}$ so that $%
u_{n}\rightarrow u$ in $H_{p}^{\pi _{0}}$. Using Proposition \ref{pro2}, we
can pass to the limit in $\left( L^{\pi }-\lambda \right) u_{n}=f_{n}$ as $%
n\rightarrow \infty $. Obviously, $\left( L^{\pi }-\lambda \right) u=f$ in $%
L_{p}$.

\emph{Uniqueness. }Assume $u_{1},u_{2}\in H_{p}^{\pi _{0}}\left( \mathbf{R}%
^{d}\right) $ solve (\ref{2'})$.$ Then $u=u_{1}-u_{2}\in H_{p}^{\pi _{0}}$
solves $\left( L^{\pi }-\lambda \right) u=0$, i.e. $\forall \varphi \in 
\tilde{C}^{\infty }\left( \mathbf{R}^{d}\right) $ 
\begin{equation*}
\int \varphi \left( L^{\pi }-\lambda \right) u=\int u\left( L^{\pi ^{\ast
}}-\lambda \right) \varphi dx=0
\end{equation*}%
According to Lemma \ref{le00}, $\int ufdx=0$ $\forall f\in \tilde{C}^{\infty
}\left( \mathbf{R}^{d}\right) $. Hence $u=0$ a.e. The statement is proved.

\subsection{Proof of Theorem \protect\ref{t1}}

Let $f\in \tilde{C}^{\infty }\left( E\right) $ and $u\in \tilde{C}^{\infty
}\left( E\right) $ be the solution to 
\begin{eqnarray}
\partial _{t}u &=&L^{\pi }u-\lambda u+f\text{ in }E,  \label{pf1} \\
u\left( 0,\cdot \right) &=&0.  \notag
\end{eqnarray}%
By Lemma \ref{led}, the associated process $Z_{t}^{\pi }$ has a density
function $p^{\pi }\left( t,x\right) ,x\in \mathbf{R}^{d}$. Then $p^{\pi
^{\ast }}\left( t,x\right) =p^{\pi }\left( t,-x\right) ,x\in \mathbf{R}^{d},$
is pdf of $Z_{t}^{\pi ^{\ast }}$. By Lemma \ref{le0}, 
\begin{eqnarray*}
u\left( t,x\right) &=&\int_{0}^{t}e^{-\lambda \left( t-s\right) }\mathbf{E}%
f\left( s,x+Z_{t-s}^{\pi }\right) ds \\
&=&\int_{0}^{t}\int e^{-\lambda \left( t-s\right) }f\left( s,x-y\right)
p^{\pi ^{\ast }}\left( t-s,y\right) dyds \\
&=&\int_{0}^{t}\int e^{-\lambda \left( t-s\right) }p^{\pi ^{\ast }}\left(
t-s,x-y\right) f\left( s,y\right) dyds,
\end{eqnarray*}%
and%
\begin{eqnarray}
L^{\pi _{0}}u\left( t,x\right) &=&\int_{0}^{t}e^{-\lambda \left( t-s\right) }%
\mathbf{E}L^{\pi _{0}}f\left( s,x+Z_{t-s}^{\pi }\right) ds  \label{fo2} \\
&=&\int_{0}^{t}\int e^{-\lambda \left( t-s\right) }L^{\pi _{0}}f\left(
s,x-y\right) p^{\pi ^{\ast }}\left( t-s,y\right) dyds  \notag \\
&=&\int_{0}^{t}\int e^{-\lambda \left( t-s\right) }(L^{\pi _{0}}p^{\pi
^{\ast }})\left( t-s,x-y\right) f\left( s,y\right) dyds.  \notag
\end{eqnarray}%
Hence%
\begin{equation*}
L^{\pi _{0}}u\left( t,x\right) =\int_{-\infty }^{\infty }\int e^{-\lambda
\left( t-s\right) }K\left( t-s,x-y\right) \chi _{\lbrack 0,\infty )}\left(
t-s\right) f\left( s,y\right) dsdy,
\end{equation*}%
where $f(s,y)=\chi _{\lbrack 0,\infty )}\left( s\right) f\left( s,y\right)
,\left( s,y\right) \in \mathbf{R}^{d+1},$%
\begin{equation*}
K\left( t,x\right) =L^{\pi _{0}}p^{\pi ^{\ast }}\left( t,x\right) \chi
_{\lbrack 0,\infty )}\left( t\right) ,\left( t,x\right) \in \mathbf{R}^{d+1}.
\end{equation*}%
Let $\varepsilon \in \left( 0,1\right) $,%
\begin{equation*}
K^{\varepsilon }\left( t,x\right) =L^{\pi _{0}}p^{\pi ^{\ast }}\left(
t,x\right) \chi _{\lbrack \varepsilon ,\infty )}\left( t\right) ,\left(
t,x\right) \in \mathbf{R}^{d+1},
\end{equation*}%
and consider for $h\in C_{0}^{\infty }\left( \mathbf{R}^{d+1}\right) ,$%
\begin{eqnarray}
T_{\lambda }^{\varepsilon }h\left( t,x\right) &=&\int_{-\infty
}^{t-\varepsilon }\int e^{-\lambda \left( t-s\right) }p^{\pi ^{\ast }}\left(
t-s,x-y\right) L^{\pi _{0}}h\left( s,y\right) dyds  \label{fo1} \\
&=&\int_{-\infty }^{\infty }\int K_{\lambda }^{\varepsilon }\left(
t-s,x-y\right) h\left( s,y\right) dyds,\left( t,x\right) \in \mathbf{R}^{d},
\notag
\end{eqnarray}%
where $K_{\lambda }^{\varepsilon }\left( t,x\right) =e^{-\lambda
t}K^{\varepsilon }\left( t,x\right) ,\left( t,x\right) \in \mathbf{R\times R}%
^{d}=\mathbf{R}^{d+1}.$

\begin{claim}
\label{cl1}For each $p\in \left( 1,\infty \right) $ there is a constant $%
C=C\left( d,p,l,N,N_{0},N_{1},c_{1}\right) $ such that%
\begin{equation}
\left\vert T_{\lambda }^{\varepsilon }h\right\vert _{L_{p}}\leq C\left\vert
h\right\vert _{L_{p}},h\in C_{0}^{\infty }\left( \mathbf{R}^{d+1}\right) ,
\label{df2}
\end{equation}%
here $L_{p}=L_{p}\left( \mathbf{R}^{d+1}\right) .$
\end{claim}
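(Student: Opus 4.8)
The plan is to run, in the space--time setting, the scheme used for Lemma \ref{pro1}: first an $L_{2}(\mathbf{R}^{d+1})$ bound for the convolution operator $T_{\lambda }^{\varepsilon }$, then a H\"{o}rmander-type cancellation estimate for its kernel $K_{\lambda }^{\varepsilon }$, and finally the Calder\'{o}n--Zygmund theorem (Theorem 3 of Chapter I in \cite{stein1}), now applied on $\mathbf{R}^{d+1}=\mathbf{R}\times \mathbf{R}^{d}$ with the anisotropic dilations $(t,x)\mapsto (\kappa (R)t,Rx)$ and the quasi-metric $\rho ((t,x),(s,z))=|x-z|+\kappa ^{-1}(|t-s|)$. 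The associated balls are doubling (a ball of radius $R$ has measure comparable to $R^{d}\kappa (R)$, and $\kappa (2R)\leq l(2)\kappa (R)$), so Stein's real-variable argument transfers and it suffices to verify the H\"{o}rmander condition against these balls; the $p$-dependence of $C$ enters only here. Throughout one uses that $\pi ^{\ast }$ and $\pi _{0}^{\ast }$ satisfy Assumption \textbf{D}$(\kappa ,l)$ with the same constants (the lower bound in \textbf{A}$_{0}$, the $\alpha _{i}$-integrability, and, for $\sigma =1$, the zero-mean condition are reflection invariant), so that Lemmas \ref{l1}, \ref{led}, \ref{le5}, \ref{ld1} and Corollary \ref{c1} apply to the reflected measures; in particular $p^{\pi ^{\ast }}(t,\cdot )$ enjoys the derivative bounds of Lemma \ref{led}.

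For the $L_{2}$ estimate I take the Fourier transform in $(t,x)$. Using $\widehat{p^{\pi ^{\ast }}(t,\cdot )}(\xi )=\exp \{\psi ^{\pi }(\xi )t\}$ and $\widehat{L^{\pi _{0}}g}(\xi )=\psi ^{\pi _{0}}(\xi )\widehat{g}(\xi )$,
\[
\widehat{K_{\lambda }^{\varepsilon }}(\tau ,\xi )=\psi ^{\pi _{0}}(\xi )\int_{\varepsilon }^{\infty }e^{-(\lambda +i2\pi \tau )t}\exp \{\psi ^{\pi }(\xi )t\}\,dt,
\]
so $|\widehat{K_{\lambda }^{\varepsilon }}(\tau ,\xi )|\leq |\psi ^{\pi _{0}}(\xi )|(\lambda -\tilde{\psi}^{\pi }(\xi ))^{-1}\leq |\psi ^{\pi _{0}}(\xi )|\,|\tilde{\psi}^{\pi }(\xi )|^{-1}$, uniformly in $\tau ,\varepsilon ,\lambda $. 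By Lemma \ref{le5}(a) for $\pi _{0}$ (its hypothesis (\ref{fa0}) follows from \textbf{D}$(\kappa ,l)$(ii)), $|\psi ^{\pi _{0}}(\xi )|\leq C\kappa (|\xi |^{-1})^{-1}$; by Lemma \ref{le5}(b) for $\pi $ (its hypothesis (\ref{fa00}) follows from \textbf{D}$(\kappa ,l)$(i) and \textbf{A}$_{0}$), $|\tilde{\psi}^{\pi }(\xi )|\geq c\kappa (|\xi |^{-1})^{-1}$. Hence $|\widehat{K_{\lambda }^{\varepsilon }}|\leq C$ and Plancherel gives $|T_{\lambda }^{\varepsilon }h|_{L_{2}}\leq C|h|_{L_{2}}$.

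For the H\"{o}rmander condition, writing a translation by $(s,z)$ as a space translation by $z$ followed by a time translation by $s$ and splitting the region $\{\rho ((t,x),0)\geq c\rho ((s,z),0)\}$ accordingly, it suffices to bound (A) $\int \int_{|x|\geq c|z|}|K_{\lambda }^{\varepsilon }(t,x-z)-K_{\lambda }^{\varepsilon }(t,x)|\,dx\,dt$ and (B) the integral of $|K_{\lambda }^{\varepsilon }(t-s,x)-K_{\lambda }^{\varepsilon }(t,x)|$ over the corresponding region for $s>0$, where $c$ is a suitably large constant depending only on $l$. Part (A) is verbatim the computation in the proof of Lemma \ref{pro1}: rescaling $x=Ry$, $t=\kappa (R)\tau $ with $R=|z|$ and using $K_{\lambda }^{\varepsilon }(t,Ry)=e^{-\lambda t}\chi _{\{t\geq \varepsilon \}}\kappa (R)^{-1}R^{-d}L^{0,R}p^{\ast R}(t/\kappa (R),y)$, the change of variables (the Jacobian cancels the kernel factors, $e^{-\lambda t}\chi \leq 1$) reduces it to $\int \int_{|y|\geq c^{\prime }}|L^{0,R}p^{\ast R}(\tau ,y-\widehat{z})-L^{0,R}p^{\ast R}(\tau ,y)|\,dy\,d\tau $; splitting over $I_{1}=\{\gamma (\tau )\leq 1\}$ and $I_{2}=\{\gamma (\tau )>1\}$, and using Lemma \ref{ld1}(a) on $I_{1}$ (bounding the increment by $2\int_{|y|>c^{\prime \prime }}|L^{0,R}p^{\ast R}(\tau ,y)|\,dy$) and Lemma \ref{ld1}(b) on $I_{2}$ (writing the increment as $-\int_{0}^{1}L^{0,R}\nabla p^{\ast R}(\tau ,\cdot -r\widehat{z})\,dr$), together with \textbf{D}$(\kappa ,l)$(iii) and the boundedness of $I_{1}$, gives the bound $C$. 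For part (B) I use the forward equation $\partial _{t}p^{\pi ^{\ast }}(t,\cdot )=L^{\pi }p^{\pi ^{\ast }}(t,\cdot )$, so that for $t>\varepsilon $
\[
\partial _{t}K_{\lambda }^{\varepsilon }(t,x)=e^{-\lambda t}\bigl(-\lambda L^{\pi _{0}}p^{\pi ^{\ast }}(t,x)+L^{\pi _{0}}L^{\pi }p^{\pi ^{\ast }}(t,x)\bigr);
\]
then $K_{\lambda }^{\varepsilon }(t-s,x)-K_{\lambda }^{\varepsilon }(t,x)$ equals $-\int_{t-s}^{t}\partial _{r}K_{\lambda }^{\varepsilon }(r,x)\,dr$ for $t-s\geq \varepsilon $, equals $-K_{\lambda }^{\varepsilon }(t,x)$ for $\varepsilon \leq t<\varepsilon +s$, and vanishes for $t<\varepsilon $. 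Rescaling by $R=\kappa ^{-1}(s)$ (so the shift has unit length and the cutoff becomes $\chi _{\{\tau \geq \varepsilon /s\}}$), the term $L^{\pi _{0}}L^{\pi }p^{\pi ^{\ast }}$ becomes $L^{0,R}L^{R}p^{\ast R}$, with $L_{1}$-norm $\leq C\gamma (\tau )^{-2\alpha _{2}}$ on $I_{2}$ by Lemma \ref{ld1}(c) (whose proof only uses the $\alpha _{i}$-integrability of the two measures and the Lemma \ref{led} bounds for the density, valid here); the $\lambda L^{\pi _{0}}p^{\pi ^{\ast }}$ term and the boundary contribution over $\varepsilon \leq t<\varepsilon +s$ reduce, on the region where $|y|$ is bounded below by a fixed constant, to integrals of the Lemma \ref{ld1}(a)-bound $C[1+1_{\sigma \in (1,2)}\gamma (\tau )^{-1}+\gamma (\tau )^{-\alpha _{1}}(\gamma (\tau )^{2}+\tau )]$ over $\tau $-intervals whose endpoints depend only on $l$. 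All of these are $\leq C$: the $\tau \gamma (\tau )^{-\alpha _{1}}$ and $1_{\sigma \in (1,2)}\gamma (\tau )^{-1}$ parts on $I_{1}$ and the $\gamma (\tau )^{-(1+\alpha _{2})}$, $\gamma (\tau )^{-2\alpha _{2}}$ parts on $I_{2}$ are precisely the two integral conditions in \textbf{D}$(\kappa ,l)$(iii), while the remaining pieces ($\gamma (\tau )^{2-\alpha _{1}}$ and constants) are integrable because $I_{1}$ is bounded.

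The main obstacle is part (B). Because the dilations $(t,x)\mapsto (\kappa (R)t,Rx)$ are genuine dilations only when $\kappa $ is a power, one must (i) choose the reference scale $R$ and the region so that after rescaling the time shift has unit size and all $\tau $-integrals run over intervals whose endpoints are controlled by $l$ alone (using the quasi-subadditivity of $\kappa ^{-1}$, itself a consequence of the scaling-function property); (ii) control the boundary term at $t=\varepsilon $ (coming from the cutoff, which rescales to $\chi _{\lbrack \varepsilon /\kappa (R),\infty )}$) uniformly in $\varepsilon \in (0,1)$ and $s$ --- this is exactly why the $|x|>2$ tail bounds of Lemma \ref{ld1}, whose favourable factors $\gamma (t)^{2}$ and $t$ make the $\tau $-integrals converge via \textbf{D}$(\kappa ,l)$(iii), are needed instead of the cruder global $L_{1}$-bounds on $L^{\pi _{0}}p^{\pi ^{\ast }}$; and (iii) observe that $\partial _{t}$ of the kernel produces the composition $L^{\pi _{0}}L^{\pi }$ of two non-local operators on the density, which is precisely the object estimated in Lemma \ref{ld1}(c). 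The remaining points --- doubling of the parabolic balls, the rescaling of $\chi _{\sigma }$ when $\sigma =1$ (handled by the zero-mean condition in \textbf{D}$(\kappa ,l)$(i)), and the transfer of Stein's argument to this space of homogeneous type --- are routine.
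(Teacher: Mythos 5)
Your plan reproduces the paper's own scheme almost exactly: a Fourier-side $L_{2}$ bound using $|\psi ^{\pi _{0}}|\lesssim |\tilde{\psi}^{\pi }|$ (Lemma \ref{le5} plus Corollary \ref{c1}), then verification of the H\"{o}rmander condition for the space--time kernel $K_{\lambda }^{\varepsilon }$ relative to the anisotropic boxes $Q_{\delta }(t,x)=(t-\kappa (\delta ),t+\kappa (\delta ))\times B_{\delta }(x)$, with the required tail and $L_{1}$ bounds coming from Lemma \ref{ld1} and Assumption \textbf{D}$(\kappa ,l)$(iii), and finally the Calder\'{o}n--Zygmund machinery adapted to this geometry (the paper's self-contained version being Theorem \ref{t3}). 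The only organizational difference in the H\"{o}rmander step is that you split the translation $(s,z)$ into a spatial move followed by a temporal one, whereas the paper, after rescaling, integrates the full increment along the straight segment from $(t,x)$ to $(t-\hat{s},x-\hat{y})$, producing the terms $b_{1}$, $b_{2}$, $b_{3}$; both decompositions call exactly the same estimates (Lemma \ref{ld1} a, b, c together with \textbf{D}$(\kappa ,l)$(iii)), and your identification of the delicate points --- the boundary contribution from the cutoff $\chi _{[\varepsilon ,\infty )}(t)$, the unit-size rescaled time shift, and the appearance of $L^{\pi _{0}}L^{\pi }$ acting on the density --- matches the paper's $B_{21}$, $b_{1}$, $b_{2}$, $b_{3}$ breakdown.

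There is, however, one genuine omission. The version of the Calder\'{o}n--Zygmund theorem you invoke (Stein Chapter I, Theorem 3, and its anisotropic analogue Theorem \ref{t3} in the appendix) yields $L_{p}$-boundedness only for $1<p<q=2$ from the $L_{2}$ bound and a one-sided H\"{o}rmander condition; it does not by itself give $p>2$. The paper closes this by a duality step: it introduces $\tilde{K}^{\varepsilon }(t,x)=K^{\varepsilon }(t,-x)=L^{\pi _{0}^{\ast }}p^{\pi }(t,x)\chi _{[\varepsilon ,\infty )}(t)$, observes that $\int (T_{\lambda }^{\varepsilon }h)\,g=\int (\tilde{T}_{\lambda }^{\varepsilon }\tilde{g})\,h(-\cdot )$, and applies the $1<q<2$ result to $\tilde{T}_{\lambda }^{\varepsilon }$, whose kernel has the same structure with $\pi $, $\pi _{0}$ replaced by $\pi ^{\ast }$, $\pi _{0}^{\ast }$ (to which Assumption \textbf{D}$(\kappa ,l)$ and Corollary \ref{c1} still apply by reflection invariance). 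You should either add this duality argument or verify the H\"{o}rmander condition in the second variable directly; as written, the claim that the transfer to the homogeneous-type setting is routine does not cover $p>2$.
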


\begin{proof}
We will apply Calderon-Zygmund theorem (see Theorem \ref{t3} in Appendix).
First we will prove the estimate in $L_{2}.$ Then, according to Theorem \ref%
{t3}, the proof reduces to verification of H\"{o}rmander condition (see \ \
\ below).

1. We start with $p=2$. Obviously,%
\begin{equation*}
\left\vert \widehat{T_{\lambda }^{\varepsilon }h}\left( t,\xi \right)
\right\vert \leq \int_{-\infty }^{t}e^{-\lambda \left( t-s\right) }|\psi
^{\pi _{0}}\left( \xi \right) |\exp \left\{ \func{Re}\psi ^{\pi ^{\ast
}}\left( \xi \right) \left( t-s\right) \right\} \left\vert \hat{h}\left(
s,\xi \right) \right\vert ds,
\end{equation*}%
$\left( t,\xi \right) \in E$. Hence by H\"{o}lder inequality, Fubini
theorem, Lemma \ref{le5} and Corollary \ref{c1}, we have $|\psi ^{\pi
_{0}}\left( \xi \right) |\leq C|\func{Re}\psi ^{\pi ^{\ast }}\left( \xi
\right) |,\xi \in \mathbf{R}^{d},$ for some $C=C\left( N,c_{1},l\right) $\
and 
\begin{equation*}
\left\vert \widehat{T_{\lambda }^{\varepsilon }h}\right\vert
_{L_{2}}^{2}\leq C\left\vert h\right\vert _{L_{2}\left( T\right) },
\end{equation*}
i.e., (\ref{df2}) follows (cf. the proof of Lemma \ref{le10}).

2. We prove (\ref{df2}) for $p\in \left( 1,2\right) $ using a version of
Calderon-Zygmund theorem (Theorem \ref{t3} in Appendix). Let $\mathbb{Q}$ be
the collection of sets $Q_{\delta }=Q_{\delta }\left( t,x\right) =\left(
t-\kappa \left( \delta \right) ,t+\kappa \left( \delta \right) \right)
\times B_{\delta }\left( x\right) ,\left( t,x\right) \in \mathbf{R}\times 
\mathbf{R}^{d}=\mathbf{R}^{d+1},\delta >0$.

Note

(i) $\left( t,x\right) \notin \left( s-\kappa \left( c\delta \right)
,s+\kappa \left( c\delta \right) \right) \times B_{c\delta }\left( y\right) $
$\Longleftrightarrow $ $\ \left( t-s,x-y\right) \notin Q_{c\delta }\left(
0\right) $ $\Longleftrightarrow $ $\left\vert t-s\right\vert \geq \kappa
\left( c\delta \right) $ or $\left\vert x-y\right\vert \geq c\delta ;$

(ii) $(\bar{s},\bar{y})\in Q_{\delta }\left( s,y\right) $ $%
\Longleftrightarrow $ $\left( \bar{s}-s,\bar{y}-y\right) \in Q_{\delta
}\left( 0\right) $ $\Longleftrightarrow $ $|s-\bar{s}|<\kappa \left( \delta
\right) $ and $\left\vert y-\bar{y}\right\vert <\delta .$

According to Theorem \ref{t3}, it is enough to show%
\begin{equation}
\int \chi _{Q_{c\delta }\left( 0\right) ^{c}}\left( t-s,x-y\right)
\left\vert K_{\lambda }^{\varepsilon }\left( t-\bar{s},x-\bar{y}\right)
-K_{\lambda }^{\varepsilon }\left( t-s,x-y\right) \right\vert dxdt\leq A%
\text{ }  \label{df3}
\end{equation}%
for all $(\bar{s},\bar{y})\in Q_{\delta }\left( s,y\right)
\Longleftrightarrow \left( \bar{s}-s,\bar{y}-y\right) \in Q_{\delta }\left(
0\right) $. Equivalently, we have to prove that 
\begin{eqnarray*}
&&\int \chi _{Q_{c\delta }\left( 0\right) ^{c}}\left( t,x\right) \left\vert
K_{\lambda }^{\varepsilon }\left( t-\tilde{s},x-\tilde{y})\right)
-K_{\lambda }^{\varepsilon }\left( t,x\right) \right\vert dxdt \\
&=&\delta ^{d}\kappa \left( \delta \right) \int \chi _{Q_{c\delta }\left(
0\right) ^{c}}\left( \kappa \left( \delta \right) t,\delta x\right)
\left\vert K_{\lambda }^{\varepsilon ,\delta }\left( t-\hat{s},x-\hat{y}%
)\right) -K_{\lambda }^{\varepsilon ,\delta }\left( t,x\right) \right\vert
dxdt \\
&\leq &A\text{ }\forall \left\vert \hat{s}\right\vert \leq 1,\left\vert \hat{%
y}\right\vert \leq 1,
\end{eqnarray*}%
where 
\begin{equation*}
K_{\lambda }^{\varepsilon ,\delta }\left( t,x\right) =K_{\lambda
}^{\varepsilon }\left( \kappa \left( \delta \right) t,\delta x\right)
,\left( t,x\right) \in \mathbf{R}^{d+1}.
\end{equation*}%
Fix $c>4$ such that $(\bar{c}-1,\infty )\subseteq I_{2}$ with $\bar{c}%
=l\left( 1/c\right) ^{-1}>3$. Let $G=\left( -\bar{c},\bar{c}\right) \times
B_{c}\left( 0\right) .$ Since $\chi _{G}\left( t,x\right) \leq \chi
_{Q_{c\delta }\left( 0\right) }\left( \kappa \left( \delta \right) t,\delta
x\right) $, it is enough to prove that%
\begin{equation}
\delta ^{d}\kappa \left( \delta \right) \int_{\mathbf{R}^{d+1}}\chi
_{G^{c}}\left( t,x\right) \left\vert K_{\lambda }^{\varepsilon ,\delta
}\left( t-\hat{s},x-\hat{y})\right) -K_{\lambda }^{\varepsilon ,\delta
}\left( t,x\right) \right\vert dxdt\leq A  \label{df4}
\end{equation}%
for all $\left\vert \hat{s}\right\vert \leq 1,\left\vert \hat{y}\right\vert
\leq 1.$ Since%
\begin{equation*}
\delta ^{d}\kappa \left( \delta \right) K_{\lambda }^{\varepsilon ,\delta
}\left( t,x\right) =e^{-\lambda t}L^{0,\delta }p^{\tilde{\pi}_{\delta
}^{\ast }}\left( t,x\right) \chi _{\left( \varepsilon /\kappa \left( \delta
\right) ,\infty \right) }\left( t\right) ,\left( t,x\right) \in \mathbf{R}%
^{d+1},
\end{equation*}%
with $L^{0,\delta }=L^{\tilde{\pi}_{0;\delta }}$, $\tilde{\pi}_{0;\delta
}\left( dy\right) =\kappa \left( \delta \right) \pi _{0}\left( \delta
dy\right) ,\tilde{\pi}_{\delta }^{\ast }\left( dy\right) =\kappa \left(
\delta \right) \pi ^{\ast }\left( \delta dy\right) ,$ we rewrite (\ref{df4})
as 
\begin{eqnarray}
B &=&\int_{\mathbf{R}^{d+1}}\chi _{G^{c}}\left( t,x\right) |e^{-\lambda (t-%
\hat{s})}L^{0,\delta }p^{\pi _{\delta }^{\ast }}\left( t-\hat{s},x-\hat{y}%
\right) \chi _{\left( \varepsilon /\kappa \left( \delta \right) ,\infty
\right) }\left( t-\hat{s}\right)  \notag \\
&&-e^{-\lambda t}L^{0,\delta }p^{\pi _{\delta }^{\ast }}\left( t,x\right)
\chi _{\left( \varepsilon /\kappa \left( \delta \right) ,\infty \right)
}\left( t\right) |dxdt  \label{df5} \\
&\leq &A  \notag
\end{eqnarray}

Since $G^{c}\subseteq \left\{ \left( t,x\right) :\left\vert t\right\vert
\leq \bar{c},\left\vert x\right\vert \geq 3\right\} \cup \left\{ \left(
t,x\right) :\left\vert t\right\vert \geq \bar{c}\right\} =G_{1}\cup G_{2}$, 
\begin{equation*}
B\leq \int_{G_{1}}...+\int_{G_{2}}...=B_{1}+B_{2}.
\end{equation*}

By Lemma \ref{ld1} a),%
\begin{eqnarray*}
B_{1}\leq &&2\int_{0}^{\bar{c}+1}\int_{\left\vert x\right\vert >2}\left\vert
L^{0,\delta }p^{\pi _{\delta }^{\ast }}\left( t,x\right) \right\vert dtdx \\
&\leq &C\int_{0}^{\bar{c}+1}\left( 1+t\gamma \left( t\right) ^{-\alpha
_{1}}+1_{\sigma \in \left( 1,2\right) }\gamma \left( t\right) ^{-1}\right)
dt.
\end{eqnarray*}

\emph{Estimate of }$B_{2}$. We have%
\begin{eqnarray*}
&&B_{2} \\
&\leq &\int_{\bar{c}}^{\infty }\int |L^{0,\delta }p^{\pi _{\delta }^{\ast
}}\left( t,x\right) |~\left\vert \chi _{\left( \varepsilon /\kappa \left(
\delta \right) ,\infty \right) }\left( t\right) -\chi _{\left( \varepsilon
/\kappa \left( \delta \right) ,\infty \right) }\left( t-\hat{s}\right)
\right\vert dtdx \\
&&+\int_{\bar{c}}^{\infty }\int |e^{-\lambda (t-\hat{s})}L^{0,\delta }p^{\pi
_{\delta }^{\ast }}(t-\hat{s},x-\hat{y})-e^{-\lambda t}L^{0,\delta }p^{\pi
_{\delta }^{\ast }}\left( t,x\right) |dtdx \\
&=&B_{21}+B_{22}.
\end{eqnarray*}%
Then $B_{21}\leq C$ by Lemma \ref{led} b). Since 
\begin{eqnarray*}
&&e^{-\lambda (t-\hat{s})}L^{0,\delta }p^{\pi _{\delta }^{\ast }}(t-\hat{s}%
,x-\hat{y})-e^{-\lambda t}L^{0,\delta }p^{\pi _{\delta }^{\ast }}\left(
t,x\right) \\
&=&\int_{0}^{1}[-\lambda e^{-\lambda (t-r\hat{s})}L^{0,\delta }p^{\pi
_{\delta }^{\ast }}\left( t-r\hat{s},x-r\hat{y}\right) \\
&&+e^{-\lambda (t-r\hat{s})}L^{0,\delta }\partial _{t}p^{\pi _{\delta
}^{\ast }}\left( t-r\hat{s},x-r\hat{y}\right) ](-\hat{s})dr \\
&&+\int_{0}^{1}e^{-\lambda (t-r\hat{s})}\hat{y}\cdot L^{0,\delta }\nabla
p^{\pi _{\delta }^{\ast }}\left( t-r\hat{s},x-r\hat{y}\right) dr
\end{eqnarray*}%
we have%
\begin{eqnarray*}
&&B_{22} \\
&\leq &\int_{\bar{c}}^{\infty }\int \int_{0}^{1}\lambda e^{-\lambda (t-r\hat{%
s})}|L^{0,\delta }p^{\pi _{\delta }^{\ast }}(t-r\hat{s},x-r\hat{y})|drdtdx \\
&&+\int_{\bar{c}}^{\infty }\int \int_{0}^{1}|L^{0,\delta }\partial
_{t}p^{\pi _{\delta }^{\ast }}\left( t-rs,x-r\hat{y}\right) |drdtdx \\
&&+\int_{\bar{c}}^{\infty }\int \int_{0}^{1}|L^{0,\delta }\nabla p^{\pi
_{\delta }^{\ast }}\left( t-r\hat{s},x-r\hat{y}\right) |drdtdx \\
&=&b_{1}+b_{2}+b_{3}.
\end{eqnarray*}%
By Lemma \ref{led} b), 
\begin{equation*}
b_{1}\leq \int_{\bar{c}-1}^{\infty }\int \lambda e^{-\lambda t}|L^{0,\delta
}p^{\pi _{\delta }^{\ast }}(t,x)|dtdx\leq C.
\end{equation*}

Since $\partial _{t}p^{\pi ^{\ast }}\left( t,x\right) =L^{\pi }p\left(
t,x\right) ,t>0$, we have by Lemma \ref{ld1} c),%
\begin{eqnarray*}
b_{2} &=&\int_{\bar{c}}^{\infty }\int \int_{0}^{1}|L^{0,\delta }L^{\pi
_{\delta }}p^{\pi _{\delta }^{\ast }}\left( t-rs,x-r\hat{y}\right) |drdtdx \\
&\leq &\int_{\bar{c}-1}^{\infty }\int |L^{0,\delta }L^{\pi _{\delta }}p^{\pi
_{\delta }^{\ast }}\left( t,x\right) |dtdx\leq C\int_{\bar{c}-1}^{\infty
}\gamma \left( t\right) ^{-2\alpha _{2}}dt.
\end{eqnarray*}%
Finally, by Lemma \ref{ld1} b),%
\begin{eqnarray*}
b_{3} &\leq &\int_{\bar{c}-1}^{\infty }\int |L^{0,\delta }\nabla p^{\pi
_{\delta }^{\ast }}\left( t,x\right) |dtdx \\
&\leq &C\int_{\bar{c}-1}^{\infty }\gamma \left( t\right) ^{-1-\alpha _{2}}dt.
\end{eqnarray*}%
Claim is proved for $p\in \left( 1,2\right) $ by Theorem \ref{t3}.

3. We prove the statement for $p>2$ in a standard way (by duality argument).
First note that%
\begin{equation*}
L^{\pi _{0}}p^{\pi ^{\ast }}\left( t,-x\right) =L^{\pi _{0}^{\ast }}p^{\pi
}\left( t,x\right) ,\left( t,x\right) \in \mathbf{R}^{d+1},
\end{equation*}%
and let%
\begin{equation*}
\tilde{K}^{\varepsilon }\left( t,x\right) =K^{\varepsilon }\left(
t,-x\right) =L^{\pi _{0}^{\ast }}p^{\pi }\left( t,x\right) \chi _{\lbrack
\varepsilon ,\infty )}\left( t\right) ,\left( t,x\right) \in \mathbf{R}%
^{d+1},
\end{equation*}%
and%
\begin{equation*}
\tilde{T}_{\lambda }^{\varepsilon }g\left( s,y\right) =\int e^{-\lambda
\left( s-t\right) }\tilde{K}^{\varepsilon }\left( s-t,y-x\right) g\left(
t,x\right) dtdx,\left( s,y\right) \in \mathbf{R}^{d+1}.
\end{equation*}%
Let $1/p+1/q=1,h,g\in C_{0}^{\infty }\left( \mathbf{R}^{d+1}\right) $. Then,
denoting $\tilde{g}\left( t,x\right) =g\left( -t,x\right) ,\left( t,x\right)
\in \mathbf{R}^{d+1},$ we have (by Fubini theorem and changing the variable
of integration)%
\begin{eqnarray*}
&&\int T_{\lambda }^{\varepsilon }h\left( t,x\right) g\left( t,x\right) dtdx
\\
&=&\int \int e^{-\lambda \left( t-s\right) }K^{\varepsilon }\left(
t-s,x-y\right) h\left( s,y\right) dsdyg\left( t,x\right) dtdx \\
&=&\int \int e^{-\lambda \left( s-t\right) }\tilde{K}^{\varepsilon }\left(
s-t,y-x\right) g\left( -t,x\right) dtdxh\left( -s,y\right) dsdy \\
&=&\int \tilde{T}_{\lambda }^{\varepsilon }\tilde{g}\left( s,y\right)
h\left( -s,y\right) dsdy,
\end{eqnarray*}%
and (\ref{df2}) holds for $\tilde{T}_{\lambda }^{\varepsilon }$ and $q\in
\left( 1,2\right) $ (see Corollary \ref{c1}). Hence by H\"{o}lder inequality,%
\begin{eqnarray*}
&&\left\vert \int T_{\lambda }^{\varepsilon }h\left( t,x\right) g\left(
t,x\right) dtdx\right\vert \\
&\leq &\left\vert \tilde{T}_{\lambda }^{\varepsilon }\tilde{g}\right\vert
_{L_{q}}\left\vert h\right\vert _{L_{p}}\leq C\left\vert g\right\vert
_{L_{q}}\left\vert h\right\vert _{L_{p}},
\end{eqnarray*}%
and (\ref{df2}) holds for $p>2$ as well, that is for all \thinspace $p\in
\left( 1,\infty \right) $. The claim is proved.
\end{proof}

Now, we see that for $h\left( t,x\right) =\chi _{\lbrack 0,T]}\left(
t\right) f\left( t,x\right) $ with $f\in \tilde{C}\left( E\right) ,$%
\begin{equation}
T_{\lambda }^{\varepsilon }h\left( t,x\right) =\int_{0}^{t-\varepsilon
}e^{-\lambda \left( t-s\right) }\mathbf{E}L^{\pi _{0}}f\left(
s,x+Z_{t-s}^{\pi }\right) ds,\left( t,x\right) \in E.  \label{fo3}
\end{equation}%
If $u\in \tilde{C}^{\infty }\left( E\right) $ solves (\ref{pf1}), then 
\begin{equation}
L^{\pi _{0}}u\left( t,x\right) =\int_{0}^{t}e^{-\lambda \left( t-s\right) }%
\mathbf{E}L^{\pi _{0}}f\left( s,x+Z_{t-s}^{\pi }\right) ds,  \label{fo4}
\end{equation}%
and $\left\vert T_{\lambda }^{\varepsilon }h-L^{\pi _{0}}u\right\vert
_{L_{p}}\rightarrow 0$ as $\varepsilon \rightarrow 0$. Since the constant $%
C=C\left( d,p,l,N,N_{0},N_{1},c_{1}\right) $ in the above Claim \ref{cl1}
does not depend on $\varepsilon $, passing to the limit in it we get the
estimate 
\begin{equation}
\left\vert L^{\pi _{0}}u\right\vert _{L_{p}}\leq C\left\vert f\right\vert
_{L_{p}}.  \label{fo5}
\end{equation}

We finish the proof of Theorem \ref{t1} the same way as the proof of Theorem %
\ref{thm:main}. Let $f\in L_{p}\left( \mathbf{R}^{d}\right) $. There is a
sequence $f_{n}\in \tilde{C}^{\infty }\left( E\right) $ such that $%
f_{n}\rightarrow f$ in $L_{p}\left( E\right) $. For each $n,$ there is
unique $u_{n}\in \tilde{C}^{\infty }\left( E\right) $ solving (\ref{1'}).
Hence%
\begin{equation*}
\partial _{t}\left( u_{n}-u_{m}\right) =\left( L^{\pi }-\lambda \right)
\left( u_{n}-u_{m}\right) +f_{n}-f_{m}.
\end{equation*}%
By (\ref{fo5}) and Lemma \ref{le0}, 
\begin{eqnarray}
\left\vert L^{\pi _{0}}\left( u_{n}-u_{m}\right) \right\vert _{L_{p}\left(
E\right) } &\leq &C\left\vert f_{n}-f_{m}\right\vert _{L_{p}\left( E\right)
}\rightarrow 0,  \label{fo6} \\
\left\vert u_{n}-u_{m}\right\vert _{L_{p}\left( E\right) } &\leq &\left( 
\frac{1}{\lambda }\wedge T\right) \left\vert f_{n}-f_{m}\right\vert
_{L_{p}\left( E\right) }\rightarrow 0,  \notag \\
\left\vert u_{n}\left( t\right) -u_{m}\left( t\right) \right\vert
_{L_{p}\left( \mathbf{R}^{d}\right) } &\leq &\left\vert
f_{n}-f_{m}\right\vert _{L_{p}\left( E\right) }\text{ }\forall t\in \left[
0,T\right] ,  \notag
\end{eqnarray}%
as $n,m\rightarrow \infty $. Hence there is $u\in \mathcal{H}_{p}^{\pi _{0}}$
so that $u_{n}\rightarrow u$ in $\mathcal{H}_{p}^{\pi _{0}}$. Moreover,%
\begin{equation}
\sup_{t\leq T}\left\vert u_{n}\left( t\right) -u\left( t\right) \right\vert
_{L_{p}\left( \mathbf{R}^{d}\right) }\rightarrow 0,  \label{fo7}
\end{equation}%
and, according to Proposition \ref{pro2}, 
\begin{equation}
\left\vert L^{\pi }f\right\vert _{L_{p}\left( E\right) }\leq C\left\vert
L^{\pi _{0}}f\right\vert _{L_{p}\left( E\right) },f\in \tilde{C}^{\infty
}\left( E\right) .  \label{fo8}
\end{equation}%
Hence (see (\ref{fo6})-(\ref{fo8})) we can pass to the limit in the equation%
\begin{equation}
u_{n}\left( t\right) =\int_{0}^{t}[L^{\pi }u_{n}\left( s\right) -\lambda
u_{n}\left( s\right) +f_{n}\left( s\right) ]ds,0\leq t\leq T.  \label{fo9}
\end{equation}%
Obviously, (\ref{fo9}) holds for $u$ and $f$. We proved the existence part
of Theorem \ref{t1}.

\emph{Uniqueness. }Assume $u_{1},u_{2}\in \mathcal{H}_{p}^{\pi _{0}}$ solve (%
\ref{1'})$.$ Then $u=u_{1}-u_{2}\in \mathcal{H}_{p}^{\pi _{0}}$ solves (\ref%
{pf1}) with $f=0$. Now, let $\varphi \in \tilde{C}^{\infty }\left( E\right) $%
, and $\tilde{\varphi}\left( t,x\right) =\varphi \left( T-t,x\right) ,\left(
t,x\right) \in E$. By Lemma \ref{le0}, there is unique $\tilde{v}\in \tilde{C%
}^{\infty }\left( E\right) $ solving (\ref{pf1}) with $f=\tilde{\varphi}$
and $\pi ^{\ast }$ instead of $\pi $. Let $v\left( t,x\right) =\tilde{v}%
\left( T-t,x\right) ,\left( t,x\right) \in E$. Then $\partial _{t}v+L^{\pi
^{\ast }}v-\lambda v+\varphi =0$ in $E$ and $v\left( T\right) =v\left(
T,\cdot \right) =0$. Integrating by parts,%
\begin{eqnarray*}
\int_{E}\varphi u &=&\int_{E}u\left( -\partial _{t}v-L^{\pi ^{\ast
}}v+\lambda v\right) \\
&=&\int_{E}v\left( \partial _{t}u-L^{\pi }u+\lambda u\right) =0.
\end{eqnarray*}%
Hence $\int_{E}u\varphi dx=0$ $\forall \varphi \in \tilde{C}^{\infty }\left(
E\right) $. Hence $u=0$ a.e. Theorem \ref{t1} is proved.

\section{Appendix}

Given a function $\kappa :\left( 0,\infty \right) \rightarrow \left(
0,\infty \right) $, consider the collection $\mathbb{Q}$ of sets $Q_{\delta
}=Q_{\delta }\left( t,x\right) =\left( t-\kappa \left( \delta \right)
,t+\kappa \left( \delta \right) \right) \times B_{\delta }\left( x\right)
,\left( t,x\right) \in \mathbf{R}\times \mathbf{R}^{d}=\mathbf{R}%
^{d+1},\delta >0$. The volume $\left\vert Q_{\delta }\left( t,x\right)
\right\vert =c_{0}\kappa \left( \delta \right) \delta ^{d}.$ We will need
the following assumptions.

\textbf{A1. }$\kappa $ is continuous, $\lim_{\delta \rightarrow 0}\kappa
\left( \delta \right) =0$ and $\lim_{\delta \rightarrow \infty }\kappa
\left( \delta \right) =\infty .$

\textbf{A2.} There is a constant $C_{1}$ and a nondecreasing continuous
function $l\left( \varepsilon \right) ,\varepsilon >0,$ such that $%
\lim_{\varepsilon \rightarrow 0}l\left( \varepsilon \right) =0$ and 
\begin{equation*}
\kappa \left( \varepsilon r\right) \leq l\left( \varepsilon \right) \kappa
(r),r>0,\varepsilon >0.
\end{equation*}

Since $Q_{\delta }\left( t,x\right) $ not "exactly" increases in $\delta $,
we present the basic estimates involving maximal functions based on the
system $\mathbb{Q=}\left\{ Q_{\delta }\right\} $.

\subsection{Vitali Lemma, maximal functions}

We start with engulfing property.

\begin{lemma}
\label{le1}Let \textbf{A}2\textbf{\ }hold. If $Q_{\delta }\left( t,x\right)
\cap Q_{\delta ^{\prime }}\left( r,z\right) \neq \emptyset $ with $\delta
^{\prime }\leq \delta $, then there is $K_{0}\geq 3$ such that $%
Q_{K_{0}\delta }\left( t,x\right) $ contains both, $Q_{\delta }\left(
t,x\right) $ and $Q_{\delta ^{\prime }}\left( r,z\right) ,$ and 
\begin{equation*}
\left\vert Q_{\delta }\left( t,x\right) \right\vert \leq \left\vert
Q_{K_{0}\delta }\left( t,x\right) \right\vert \leq K_{0}^{d}l\left(
K_{0}\right) \left\vert Q_{\delta }\left( t,x\right) \right\vert .
\end{equation*}
\end{lemma}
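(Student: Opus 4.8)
The plan is to pick $K_{0}$ as a large \emph{universal} constant (depending only on the scaling factor $l$), and then to verify the two inclusions and the volume comparison by elementary triangle-inequality estimates, routing every comparison of $\kappa$ at two different scales through Assumption \textbf{A2} (since $\kappa$ itself is not assumed monotone).

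First I would fix the constant. Since $l$ is nondecreasing, continuous, and satisfies $\lim_{\varepsilon\to0}l(\varepsilon)=0$, I may choose $K_{0}\ge3$ so large that $l(1/K_{0})\le\min\{1,(1+2l(1))^{-1}\}$; this makes $K_{0}$ depend only on $l$. The role of these two inequalities appears in the next step.

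Next, the engulfing. Given a point $(s,w)$ in $Q_{\delta}(t,x)\cap Q_{\delta'}(r,z)$ with $\delta'\le\delta$, the triangle inequality bounds $|t-r|\le|t-s|+|s-r|<\kappa(\delta)+\kappa(\delta')$ and $|x-z|<\delta+\delta'\le2\delta$; applying \textbf{A2} in the form $\kappa(\delta')=\kappa\big((\delta'/\delta)\delta\big)\le l(1)\kappa(\delta)$ gives $|t-r|<(1+l(1))\kappa(\delta)$. Then for any $(\tau,v)\in Q_{\delta'}(r,z)$ a further triangle inequality yields $|v-x|<\delta'+2\delta\le3\delta\le K_{0}\delta$ and $|\tau-t|<\kappa(\delta')+(1+l(1))\kappa(\delta)\le(1+2l(1))\kappa(\delta)$. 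Finally \textbf{A2} with $\varepsilon=1/K_{0}$ and $r=K_{0}\delta$ gives $\kappa(\delta)\le l(1/K_{0})\kappa(K_{0}\delta)$, hence $\kappa(K_{0}\delta)\ge\kappa(\delta)/l(1/K_{0})\ge(1+2l(1))\kappa(\delta)$ by the choice of $K_{0}$, so $|\tau-t|<\kappa(K_{0}\delta)$ and $(\tau,v)\in Q_{K_{0}\delta}(t,x)$. This proves $Q_{\delta'}(r,z)\subseteq Q_{K_{0}\delta}(t,x)$; the inclusion $Q_{\delta}(t,x)\subseteq Q_{K_{0}\delta}(t,x)$ is the same argument (with $\delta'=\delta$, $r=t$, $z=x$) and is immediate once $l(1/K_{0})\le1$.

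For the volume estimate I would use $|Q_{\rho}(t,x)|=c_{0}\kappa(\rho)\rho^{d}$, so that $|Q_{K_{0}\delta}(t,x)|=c_{0}K_{0}^{d}\kappa(K_{0}\delta)\delta^{d}$. The upper bound $|Q_{K_{0}\delta}(t,x)|\le K_{0}^{d}l(K_{0})|Q_{\delta}(t,x)|$ is exactly \textbf{A2} ($\kappa(K_{0}\delta)\le l(K_{0})\kappa(\delta)$), and the lower bound $|Q_{\delta}(t,x)|\le|Q_{K_{0}\delta}(t,x)|$ follows from $\kappa(K_{0}\delta)\ge\kappa(\delta)$ (again \textbf{A2} with $\varepsilon=1/K_{0}$ and $l(1/K_{0})\le1$) together with $K_{0}^{d}\ge1$. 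There is no genuine obstacle here; the only point requiring care is that $\kappa$ is not monotone, so one cannot compare $\kappa(\delta')$ with $\kappa(\delta)$ directly — each such step must pass through \textbf{A2} — and $K_{0}$ must be selected once at the outset so that a single constant simultaneously handles the time-interval inclusion (which needs $l(1/K_{0})$ small relative to $1+2l(1)$) and the spatial inclusion (which needs $K_{0}\ge3$).
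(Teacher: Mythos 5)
Your proof is correct and follows essentially the same route as the paper: the same choice of $K_{0}$ (the paper requires $[2l(1)+1]\,l(1/K_{0})\le 1$, which coincides with your $l(1/K_{0})\le(1+2l(1))^{-1}$ since that already forces $l(1/K_{0})\le1$), the same chain of triangle inequalities passing each comparison $\kappa(\delta')$ versus $\kappa(\delta)$ through \textbf{A2}, and the same volume computation from $|Q_{\rho}|=c_{0}\kappa(\rho)\rho^{d}$. The only cosmetic difference is that you first bound $|t-r|$ and then add $|\tau-r|$, whereas the paper bounds $|r'-t|$ directly via a three-term triangle inequality, arriving at the identical constant $(1+2l(1))\kappa(\delta)$.
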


\begin{proof}
Let $\left( s,y\right) \in Q_{\delta }\left( t,x\right) \cap Q_{\delta
^{\prime }}\left( r,z\right) $ with $\delta ^{\prime }\leq \delta .$ If $%
\left( r^{\prime },z^{\prime }\right) \in Q_{\delta ^{\prime }}\left(
r,z\right) $, then $\left\vert z^{\prime }-x\right\vert \leq 3\delta $, and
using \textbf{A2},%
\begin{equation*}
\left\vert r^{\prime }-t\right\vert \leq \left\vert r^{\prime }-r\right\vert
+\left\vert r-s\right\vert +\left\vert s-t\right\vert \leq 2\kappa \left(
\delta ^{\prime }\right) +\kappa \left( \delta \right) \leq \lbrack 2l\left(
1\right) +1]\kappa \left( \delta \right) .
\end{equation*}%
We choose $K_{0}\geq 3$ so that $[2l\left( 1\right) +1]l(1/K_{0})\leq 1$. By 
\textbf{A2}, 
\begin{equation*}
\lbrack 2l\left( 1\right) +1]\kappa \left( \delta \right) \leq \lbrack
2l\left( 1\right) +1]l(1/K_{0})\kappa \left( K_{0}\delta \right) \leq \kappa
\left( K_{0}\delta \right) .
\end{equation*}
Hence $Q_{\delta ^{\prime }}\left( r,z\right) \subseteq Q_{K_{0}\delta
}\left( t,x\right) $ and, obviously, $Q_{\delta }\left( t,x\right) \subseteq
Q_{K_{0}\delta }\left( t,x\right) $. Also, 
\begin{equation*}
\left\vert Q_{K_{0}\delta }\left( t,x\right) \right\vert
=c_{0}K_{0}^{d}\delta ^{d}\kappa (K_{0}\delta )\leq c_{0}K_{0}^{d}\delta
^{d}l(K_{0})\kappa (\delta )=K_{0}^{d}l(K_{0})\left\vert Q_{\delta }\left(
t,x\right) \right\vert .
\end{equation*}
\end{proof}

Now, following 3.1.1 in \cite{stein1}, we prove Vitali covering lemma.

\begin{lemma}
\label{levi}Let $E\subseteq \mathbf{R}\times \mathbf{R}^{d}$ be a union of a
finite collection $\left\{ Q^{\prime }\right\} $ of sets from the system $%
\{Q_{\delta }\left( t,x):(t,x\right) \in \mathbf{R}^{d+1},\delta >0\}$ and 
\textbf{A2} hold.

There is a positive $c=\frac{1}{K_{0}^{d}l\left( K_{0}\right) }$ and a
disjoint subcollection \newline
$\left\{ Q^{k}=Q_{\delta _{k}}\left( t_{k},x_{k}\right) ,1\leq k\leq
m\right\} $ such that%
\begin{equation*}
\sum_{k=1}^{m}\left\vert Q^{k}\right\vert \geq c\left\vert E\right\vert .
\end{equation*}
\end{lemma}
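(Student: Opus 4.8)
The plan is to run the classical greedy selection argument of Stein (3.1.1 in \cite{stein1}), substituting the engulfing property of Lemma \ref{le1} for the usual ``triple the radius'' inclusion, which is unavailable here because $Q_{\delta }\left( t,x\right) $ is not monotone in $\delta $.

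First I would extract a disjoint subfamily greedily. Since the collection $\left\{ Q^{\prime }\right\} $ is finite, choose $Q^{1}=Q_{\delta _{1}}\left( t_{1},x_{1}\right) $ to be a member with $\delta _{1}$ maximal; having selected $Q^{1},\ldots ,Q^{j}$, discard from the collection every set meeting $Q^{1}\cup \ldots \cup Q^{j}$, and if anything remains, let $Q^{j+1}=Q_{\delta _{j+1}}\left( t_{j+1},x_{j+1}\right) $ be a remaining set with $\delta _{j+1}$ maximal. This terminates after finitely many steps and produces pairwise disjoint $Q^{1},\ldots ,Q^{m}$.

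Next I would show $E\subseteq \bigcup_{k=1}^{m}Q_{K_{0}\delta _{k}}\left( t_{k},x_{k}\right) $, where $K_{0}\geq 3$ is the constant of Lemma \ref{le1}. Fix any $Q^{\prime }=Q_{\delta ^{\prime }}\left( r,z\right) $ in the original collection. Either $Q^{\prime }$ was selected, say $Q^{\prime }=Q^{k}$, and then trivially $Q^{\prime }\subseteq Q_{K_{0}\delta _{k}}\left( t_{k},x_{k}\right) $; or $Q^{\prime }$ was discarded at some stage because it met an already-selected $Q^{k}$. In the latter case $Q^{\prime }$ was still available at the moment $Q^{k}$ was chosen, so maximality gives $\delta _{k}\geq \delta ^{\prime }$; since $Q^{k}\cap Q^{\prime }\neq \emptyset $, Lemma \ref{le1} (applied with the smaller set $Q^{\prime }$) yields $Q^{\prime }\subseteq Q_{K_{0}\delta _{k}}\left( t_{k},x_{k}\right) $. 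Since $E=\bigcup Q^{\prime }$, the inclusion follows. Then, using the volume bound of Lemma \ref{le1},
\begin{equation*}
\left\vert E\right\vert \leq \sum_{k=1}^{m}\left\vert Q_{K_{0}\delta _{k}}\left( t_{k},x_{k}\right) \right\vert \leq K_{0}^{d}l\left( K_{0}\right) \sum_{k=1}^{m}\left\vert Q^{k}\right\vert ,
\end{equation*}
which rearranges to $\sum_{k=1}^{m}\left\vert Q^{k}\right\vert \geq c\left\vert E\right\vert $ with $c=1/\left( K_{0}^{d}l\left( K_{0}\right) \right) $, as claimed.

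The argument is essentially routine once Lemma \ref{le1} is available; the only point needing care is the bookkeeping in the greedy step, namely verifying that whenever a set is discarded it meets a previously selected cube whose radius is at least its own. This is exactly what makes the engulfing inequality applicable, and I expect no genuine obstacle beyond this.
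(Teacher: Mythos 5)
Your proposal is correct and follows essentially the same greedy Vitali-type selection the paper uses: pick sets of maximal radius among those not yet discarded, invoke Lemma \ref{le1} for the engulfing inclusion and the volume bound, and sum. The only difference is expository — you spell out explicitly why a discarded set has radius at most that of the selected set it meets (because it was still available when that set was chosen), a step the paper leaves implicit.
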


\begin{proof}
Let $Q^{1}=Q_{\delta _{1}}\left( t_{1},x_{1}\right) $ be the set of the
collection $\left\{ Q^{\prime }\right\} $ with maximal $\delta $. Let $%
Q^{2}=Q_{\delta _{2}}\left( t_{2},x_{2}\right) $ be the set with maximal $%
\delta $ among remaining sets in $\left\{ Q^{\prime }\right\} $ that do not
intersect $Q^{1}$. According to Lemma \ref{le1}, $Q_{K_{0}\delta _{1}}\left(
t_{1},x_{1}\right) $ contains $Q^{1}$ and all $Q_{\delta }$ in $\left\{
Q^{\prime }\right\} $ that intersect $Q^{1}$ and such that $\delta \leq
\delta _{1}.$ Continuing we get $Q_{K_{0}\delta _{k}}\left(
t_{k},x_{k}\right) $ containing $Q^{k}=Q_{\delta _{k}}\left(
t_{k},x_{k}\right) $ and all $Q_{\delta }$ in $\left\{ Q^{\prime }\right\} $
that intersect $Q^{k}$ and such that $\delta \leq \delta _{k}.$ So we obtain
a finite disjoint subcollection $\left\{ Q^{k}=Q_{\delta _{k}}\left(
t_{k},x_{k}\right) ,1\leq k\leq m\right\} $ such that $\cup
_{k=1}^{m}Q_{K_{0}\delta _{k}}\left( t_{k},x_{k}\right) \supseteq Q_{\delta
} $ for any $Q^{\delta }$ in $\left\{ Q^{\prime }\right\} .$ Hence $\cup
_{k=1}^{m}Q_{K_{0}\delta _{k}}\left( t_{k},x_{k}\right) \supseteq E$, and by
Lemma \ref{le1},%
\begin{equation*}
\left\vert E\right\vert \leq \sum_{k=1}^{m}\left\vert Q_{K_{0}\delta
_{k}}\right\vert \leq K_{0}^{d}l\left( K_{0}\right) \sum_{k=1}^{m}\left\vert
Q^{k}\right\vert .
\end{equation*}
\end{proof}

\begin{remark}
\label{re1}The statement of the Lemma \ref{levi} still holds if instead of 
\textbf{A2} we assume that there is a constant $C$ so that $C\kappa \left(
\delta \right) \geq \kappa \left( \delta ^{\prime }\right) $ whenever $%
\delta \geq \delta ^{\prime }.$
\end{remark}

Following \cite{stein1}, for a locally integrable function $f\left(
t,x\right) $ on $\mathbf{R}^{d+1}$ we define 
\begin{equation*}
\left( A_{\delta }f\right) (t,x)=\frac{1}{\left\vert Q_{\delta }\left(
t,x\right) \right\vert }\int_{Q_{\delta }\left( t,x\right) }f\left(
s,y\right) dsdy,\left( t,x\right) \in \mathbf{R\times R}^{d},\delta >0
\end{equation*}%
and the maximal function of $f$ by%
\begin{equation*}
\mathcal{M}f\left( t,x\right) =\sup_{\delta >0}\left( A_{\delta }\left\vert
f\right\vert \right) (t,x),\left( t,x\right) \in \mathbf{R}^{d+1}.
\end{equation*}%
We use collection $\mathbb{Q}$ to define a larger, noncentered maximal
function of $f$, as%
\begin{equation*}
\widetilde{\mathcal{M}}f\left( t,x\right) =\sup_{\left( t,x\right) \in Q}%
\frac{1}{\left\vert Q\right\vert }\int_{Q}|f\left( s,y\right) |dsdy,\left(
t,x\right) \in \mathbf{R}^{d+1},
\end{equation*}%
where $\sup $ is taken over all $Q\in \mathbb{Q}$ that contain $(t,x).$

\begin{remark}
\label{re2}Let \textbf{A2} hold and $K_{0}$ be a constant in Lemma \ref{le1}%
. For a locally integrable $f$ on $\mathbf{R}^{d+1},$%
\begin{equation*}
\mathcal{M}f\leq \widetilde{\mathcal{M}}f\leq \frac{1}{K_{0}^{d}l\left(
K_{0}\right) }\mathcal{M}f.
\end{equation*}%
Indeed, if $(t,x)\in Q^{\prime }$=$Q_{\delta }\left( t^{\prime },x^{\prime
}\right) $, then by Lemma \ref{le1} 
\begin{equation*}
\frac{1}{\left\vert Q^{\prime }\right\vert }\int_{Q^{\prime }}\left\vert
f\right\vert \leq \frac{K_{0}^{d}l\left( K_{0}\right) }{\left\vert
Q_{K_{0}\delta }\left( t,x\right) \right\vert }\int_{Q_{K_{0}\delta }\left(
t,x\right) }\left\vert f\right\vert .
\end{equation*}

Note $\widetilde{\mathcal{M}}f$ is lower semicontinuous as a sup of lower
semicontinuous functions.
\end{remark}

Theorem 1.3.1 in \cite{stein1} holds for $\mathbb{Q}$ (we sketch its proof).

\begin{theorem}
\label{ft}Let \textbf{A2} hold and $f$ be measurable function on $\mathbf{R}%
^{d+1}=\mathbf{R\times R}^{d}.$

(a) If $f\in L_{p},1\leq p\leq \infty $, then $\mathcal{M}f$ is finite a.e.

(b) If $f\in L_{1}$, then for every $\alpha >0$, 
\begin{equation*}
\left\vert \left\{ \mathcal{M}f\left( t,x\right) >\alpha \right\}
\right\vert \leq \frac{c}{\alpha }\int |f|dm.
\end{equation*}

(c) If $f\in L_{p},1<p\leq \infty $, then $\mathcal{M}f\in L_{p}$ and%
\begin{equation*}
\left\vert \mathcal{M}f\right\vert _{L_{p}}\leq N_{p}\left\vert f\right\vert
_{L_{p}},
\end{equation*}%
where $N_{p}$ depends only on $p,l$ and $K_{0}.$
\end{theorem}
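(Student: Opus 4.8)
The plan is to follow the classical proof of the Hardy--Littlewood maximal theorem (Theorem 1.3.1 in \cite{stein1}), with the engulfing property (Lemma \ref{le1}) and the Vitali-type covering lemma (Lemma \ref{levi}) playing the roles of their Euclidean analogues. Concretely, I would first establish the weak-type bound (b), then deduce the strong-type bound (c) by a Marcinkiewicz-type truncation argument against the trivial $L_{\infty}$ estimate, and finally obtain (a) as an immediate corollary of (b) and (c).

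For (b): fix $\alpha>0$. Since $\mathcal{M}f\le\widetilde{\mathcal{M}}f$ and $\widetilde{\mathcal{M}}f$ is lower semicontinuous (Remark \ref{re2}), the set $\{\widetilde{\mathcal{M}}f>\alpha\}$ is open, so by inner regularity of Lebesgue measure it suffices to bound $|K|$ for an arbitrary compact $K\subseteq\{\widetilde{\mathcal{M}}f>\alpha\}$. Every point of $K$ lies in some $Q\in\mathbb{Q}$ whose average of $|f|$ exceeds $\alpha$; by compactness finitely many such boxes $\{Q'\}$ cover $K$, so $E:=\bigcup Q'$ is of the form required by Lemma \ref{levi}, which supplies a disjoint subcollection $Q^{1},\dots,Q^{m}$ with $\sum_{k}|Q^{k}|\ge c_{0}'|E|\ge c_{0}'|K|$, where $c_{0}'=1/(K_{0}^{d}l(K_{0}))$. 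Using disjointness, $|K|\le (c_{0}')^{-1}\sum_{k}|Q^{k}|\le (c_{0}'\alpha)^{-1}\sum_{k}\int_{Q^{k}}|f|\le (c_{0}'\alpha)^{-1}\int|f|\,dm$; letting $K$ exhaust $\{\widetilde{\mathcal{M}}f>\alpha\}\supseteq\{\mathcal{M}f>\alpha\}$ gives (b) with $c=K_{0}^{d}l(K_{0})$.

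For (c): the case $p=\infty$ is immediate since $(A_{\delta}|f|)(t,x)\le|f|_{L_{\infty}}$, hence $|\mathcal{M}f|_{L_{\infty}}\le|f|_{L_{\infty}}$. For $1<p<\infty$, for each $\alpha>0$ split $f=f_{1}+f_{2}$ with $f_{1}=f\,\mathbf{1}_{\{|f|>\alpha/2\}}\in L_{1}$ and $|f_{2}|_{L_{\infty}}\le\alpha/2$; subadditivity of $\mathcal{M}$ and $\mathcal{M}f_{2}\le\alpha/2$ give $\{\mathcal{M}f>\alpha\}\subseteq\{\mathcal{M}f_{1}>\alpha/2\}$, so (b) yields $|\{\mathcal{M}f>\alpha\}|\le (2c/\alpha)\int_{\{|f|>\alpha/2\}}|f|\,dm$. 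Then $\int|\mathcal{M}f|^{p}\,dm=p\int_{0}^{\infty}\alpha^{p-1}|\{\mathcal{M}f>\alpha\}|\,d\alpha$, and Tonelli's theorem turns this into $\int|\mathcal{M}f|^{p}\,dm\le \big(2^{p}cp/(p-1)\big)\int|f|^{p}\,dm$, i.e. (c) with $N_{p}=\big(2^{p}cp/(p-1)\big)^{1/p}$ depending only on $p$, $l$, $K_{0}$. Part (a) follows at once: for $1<p\le\infty$ from $\mathcal{M}f\in L_{p}$, and for $p=1$ from $|\{\mathcal{M}f=\infty\}|\le|\{\mathcal{M}f>\alpha\}|\le (c/\alpha)|f|_{L_{1}}\to0$ as $\alpha\to\infty$.

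I do not expect a genuine obstacle once Lemmas \ref{le1} and \ref{levi} are in hand; the only point requiring care is that, because the boxes $Q_{\delta}$ are not monotone in $\delta$, the set $\{\mathcal{M}f>\alpha\}$ need not be open, which is exactly why one first passes to the noncentered maximal function $\widetilde{\mathcal{M}}f$ and exploits its lower semicontinuity before invoking inner regularity and the covering lemma. The remaining computation in (c) is the standard distribution-function estimate and is routine.
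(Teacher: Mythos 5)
Your proof is correct and follows essentially the same route as the paper: pass to the noncentered maximal function $\widetilde{\mathcal{M}}f$ for lower semicontinuity, cover a compact subset of $\{\widetilde{\mathcal{M}}f>\alpha\}$ and invoke the Vitali-type Lemma~\ref{levi} to get the weak $(1,1)$ bound with $c=K_{0}^{d}l(K_{0})$, then truncate at height $\alpha/2$ and run the distribution-function integral to obtain the strong $L_{p}$ bound. Part (a) as a corollary of (b)/(c) and the $p=\infty$ case are handled the same way the paper leaves them implicit.
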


\begin{proof}
(b)\textbf{\ }Let $E_{\alpha }=\left\{ \widetilde{\mathcal{M}}f\left(
t,x\right) >\alpha \right\} $ and $E\subseteq E_{\alpha }$ be any compact
subset. Since $\widetilde{\mathcal{M}}f$ is lower semicontinuous, $E_{\alpha
}$ is open. By definition of $\widetilde{\mathcal{M}}f$ for each $\left(
t,x\right) \in E$, there is $Q\in \mathbb{Q}$ so that $\left( t,x\right) \in
Q$ and%
\begin{equation*}
\left\vert Q\right\vert \leq \frac{1}{\alpha }\int_{Q}\left\vert
f\right\vert .
\end{equation*}%
Since $E$ is compact there exist a finite number $Q_{\delta _{1}}\left(
t_{1},x_{1}\right) ,\ldots ,Q_{\delta _{n}}\left( t_{n},x_{n}\right) \in 
\mathbb{Q}$ so that $E\subseteq \cup _{j=1}^{n}Q_{\delta _{j}}\left(
t_{j},x_{j}\right) $. By Lemma \ref{levi}, there is a subcovering of
disjoint sets $Q^{1},\ldots ,Q^{m}$ so that%
\begin{equation*}
\left\vert E\right\vert \leq c\sum_{k=1}^{m}\left\vert Q^{k}\right\vert \leq 
\frac{c}{\alpha }\sum_{k=1}^{m}\int_{Q^{k}}\left\vert f\right\vert \leq 
\frac{c}{\alpha }\int \left\vert f\right\vert .
\end{equation*}%
with $c=K_{0}^{d}l\left( K_{0}\right) $. Taking $\sup $ over all such
compacts $E$ we get (b)\textbf{.}

\textbf{c) }Let $f_{1}=f\chi _{\left\{ \left\vert f\right\vert >\alpha
/2\right\} }$. Note that $\widetilde{\mathcal{M}}f\leq \widetilde{\mathcal{M}%
}f_{1}+\frac{\alpha }{2}.$ Hence by part (b) 
\begin{equation*}
\left\vert \left\{ \widetilde{\mathcal{M}}f>\alpha \right\} \right\vert \leq
\left\vert \left\{ \widetilde{\mathcal{M}}f_{1}>\alpha /2\right\}
\right\vert \leq \frac{2c}{\alpha }\int_{\left\vert f\right\vert >\alpha
/2}\left\vert f\right\vert dm.
\end{equation*}%
On the other hand,%
\begin{eqnarray*}
\int \left( \widetilde{\mathcal{M}}f\right) ^{p} &=&p\int_{0}^{\infty
}\left\vert \left\{ \widetilde{\mathcal{M}}f>\alpha \right\} \right\vert
\alpha ^{p-1}d\alpha \leq p\frac{2c}{\alpha }\int_{0}^{\infty
}\int_{\left\vert f\right\vert >\alpha /2}\left\vert f\right\vert \alpha
^{p-1}d\alpha \\
&=&2cp\int \int_{0}^{2\left\vert f\right\vert }\alpha ^{p-2}d\alpha
\left\vert f\right\vert =c\frac{p}{p-1}2^{p}\int \left\vert f\right\vert
^{p}.
\end{eqnarray*}
\end{proof}

\begin{corollary}
\label{co1}Let $f\in L_{1}$. Then%
\begin{equation*}
\lim_{\delta \rightarrow 0}A_{\delta }f\left( t,x\right) =f\left( t,x\right) 
\text{ a.e.}
\end{equation*}%
and $\left\vert f\left( t,x\right) \right\vert \leq \mathcal{M}f\left(
t,x\right) $ a.e. Moreover, for every $\alpha >0$, 
\begin{equation*}
\left\vert \left\{ \mathcal{\tilde{M}}f\left( t,x\right) >\alpha \right\}
\right\vert \leq \frac{2c}{\alpha }\int_{\left\{ \mathcal{\tilde{M}}f\left(
t,x\right) >\alpha /2\right\} }|f|dm,
\end{equation*}%
where $c$ is a constant in Theorem \ref{ft}.
\end{corollary}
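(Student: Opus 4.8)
The plan is to run the classical Lebesgue differentiation argument, using the weak-type $(1,1)$ bound already established in Theorem \ref{ft}(b) — which, as its proof shows, holds verbatim for $\widetilde{\mathcal{M}}$ as well, since $\mathcal{M}f\le \widetilde{\mathcal{M}}f$. First I would observe that Assumption \textbf{A2} by itself forces the boxes to contract to a point as $\delta\to 0$: indeed $\kappa(\delta)=\kappa(\delta\cdot 1)\le l(\delta)\kappa(1)\to 0$, and $B_\delta(x)$ has radius $\delta$, so the diameter of $Q_\delta(t,x)=(t-\kappa(\delta),t+\kappa(\delta))\times B_\delta(x)$ tends to $0$. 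Hence for $g\in C_0^\infty(\mathbf{R}^{d+1})$,
\begin{equation*}
|A_\delta g(t,x)-g(t,x)|\le \sup_{(s,y)\in Q_\delta(t,x)}|g(s,y)-g(t,x)|\longrightarrow 0\qquad(\delta\to 0)
\end{equation*}
for every $(t,x)$, by (uniform) continuity of $g$.

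Next I would upgrade this to arbitrary $f\in L_1$ by truncation. Given $\varepsilon>0$, choose $g\in C_0^\infty(\mathbf{R}^{d+1})$ with $|f-g|_{L_1}<\varepsilon$ and put $h=f-g$. From $|A_\delta f-f|\le |A_\delta g-g|+A_\delta|h|+|h|$ and the previous step,
\begin{equation*}
\limsup_{\delta\to 0}|A_\delta f(t,x)-f(t,x)|\le \mathcal{M}h(t,x)+|h(t,x)|,\qquad (t,x)\in\mathbf{R}^{d+1}.
\end{equation*}
For each $\alpha>0$, Theorem \ref{ft}(b) applied to $h$ together with Chebyshev's inequality for $|h|$ gives
\begin{equation*}
\bigl|\{\limsup_{\delta\to 0}|A_\delta f-f|>\alpha\}\bigr|\le \bigl|\{\mathcal{M}h>\alpha/2\}\bigr|+\bigl|\{|h|>\alpha/2\}\bigr|\le \frac{2(c+1)}{\alpha}\,|h|_{L_1}\le \frac{2(c+1)}{\alpha}\,\varepsilon .
\end{equation*}
Letting $\varepsilon\to 0$ shows this set is Lebesgue-null for every $\alpha>0$, hence $A_\delta f\to f$ a.e. At any point of convergence, $|f(t,x)|=\lim_\delta|A_\delta f(t,x)|\le \sup_\delta A_\delta|f|(t,x)=\mathcal{M}f(t,x)\le \widetilde{\mathcal{M}}f(t,x)$, which proves the pointwise domination and, in particular, $\{|f|>\alpha/2\}\subseteq\{\widetilde{\mathcal{M}}f>\alpha/2\}$ up to a null set.

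For the refined weak-type inequality I would fix $\alpha>0$ and split $f=f_1+(f-f_1)$ with $f_1=f\chi_{\{|f|>\alpha/2\}}\in L_1$. Since $|f-f_1|\le\alpha/2$ pointwise, $A_\delta|f-f_1|\le\alpha/2$ for every $\delta$, so $\widetilde{\mathcal{M}}(f-f_1)\le\alpha/2$ and hence $\widetilde{\mathcal{M}}f\le\widetilde{\mathcal{M}}f_1+\alpha/2$; thus $\{\widetilde{\mathcal{M}}f>\alpha\}\subseteq\{\widetilde{\mathcal{M}}f_1>\alpha/2\}$. Applying the weak-$(1,1)$ bound in the proof of Theorem \ref{ft}(b) (in its $\widetilde{\mathcal{M}}$ form) to $f_1$ at level $\alpha/2$, and then using the inclusion recorded above,
\begin{equation*}
\bigl|\{\widetilde{\mathcal{M}}f>\alpha\}\bigr|\le \frac{2c}{\alpha}\int|f_1|\,dm=\frac{2c}{\alpha}\int_{\{|f|>\alpha/2\}}|f|\,dm\le \frac{2c}{\alpha}\int_{\{\widetilde{\mathcal{M}}f>\alpha/2\}}|f|\,dm ,
\end{equation*}
which is the asserted estimate.

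I do not expect a genuine obstacle here: once Theorem \ref{ft} is in hand the argument is routine. The only two points requiring a moment's care are (a) extracting $\lim_{\delta\to 0}\kappa(\delta)=0$ from \textbf{A2}, so that the boxes actually shrink to a point without invoking \textbf{A1}, and (b) the bookkeeping between $\mathcal{M}$ and $\widetilde{\mathcal{M}}$ that lets the final integration run over $\{\widetilde{\mathcal{M}}f>\alpha/2\}$ rather than merely over $\{|f|>\alpha/2\}$.
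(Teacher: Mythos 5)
Your proof is correct and follows essentially the same route as the paper's: approximation by compactly supported continuous functions, the bound $\limsup_{\delta\to 0}|A_\delta f-f|\le \mathcal{M}(f-g)+|f-g|$ combined with Theorem \ref{ft}(b) and Chebyshev, and then the truncation $f_1=f\chi_{\{|f|>\alpha/2\}}$ with $\tilde{\mathcal{M}}f\le\tilde{\mathcal{M}}f_1+\alpha/2$ for the refined weak-type estimate. Your side observation that \textbf{A2} alone already gives $\kappa(\delta)\to 0$ as $\delta\to 0$ is a correct, minor clarification not made explicit in the paper.
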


\begin{proof}
Let $f\in L_{1},\varepsilon >0$. There is $g\in C_{c}\left( \mathbf{R}%
^{d+1}\right) $ so that $\left\vert f-g\right\vert _{L_{1}}\leq \varepsilon $%
. Let $\eta >0$. Since $g$ is uniformly continuous, for all $\left(
t,x\right) $%
\begin{eqnarray*}
&&\left\vert A_{\delta }g\left( t,x\right) -g\left( t,x\right) \right\vert 
\\
&\leq &\frac{1}{\left\vert Q_{\delta }\left( t,x\right) \right\vert }%
\int_{Q_{\delta }\left( t,x\right) }\left\vert g\left( s,y\right) -g\left(
t,x\right) \right\vert dsdy\leq \eta 
\end{eqnarray*}%
if $\delta \leq \delta _{0}$ for some $\delta _{0}>0$. Hence $%
\sup_{t,x}\left\vert A_{\delta }g\left( t,x\right) -g\left( t,x\right)
\right\vert \rightarrow 0$ as $\delta \rightarrow 0.$ Now for $\left(
t,x\right) \in \mathbf{R}^{d+1},$ 
\begin{eqnarray*}
&&\lim \sup_{\delta \rightarrow 0}\left\vert A_{\delta }f\left( t,x\right)
-f\left( t,x\right) \right\vert  \\
&\leq &\lim \sup_{\delta \rightarrow 0}\left\vert A_{\delta }f\left(
t,x\right) -A_{\delta }g\left( t,x\right) \right\vert +\left\vert g\left(
t,x\right) -f\left( t,x\right) \right\vert  \\
&\leq &\mathcal{M}\left( f-g\right) \left( t,x\right) +\left\vert g\left(
t,x\right) -f\left( t,x\right) \right\vert .
\end{eqnarray*}%
Hence for any $\alpha >0,$ by Theorem \ref{ft},%
\begin{eqnarray*}
&&\left\vert \left\{ \lim \sup_{\delta \rightarrow 0}\left\vert A_{\delta
}f\left( t,x\right) -f\left( t,x\right) \right\vert >\alpha \right\}
\right\vert  \\
&\leq &\left\vert \left\{ \mathcal{M}\left( f-g\right) >\alpha /2\right\}
\right\vert +\left\vert \left\{ \left\vert g-f\right\vert >\alpha /2\right\}
\right\vert  \\
&\leq &\frac{2c\varepsilon }{\alpha }+\frac{2\varepsilon }{\alpha }.
\end{eqnarray*}%
Since $\varepsilon $ and $\alpha $ are arbitrary, it follows that $\lim
\sup_{\delta \rightarrow 0}\left\vert A_{\delta }f\left( t,x\right) -f\left(
t,x\right) \right\vert =0$ a.e. Hence for almost all $\left( t,x\right) ,$%
\begin{eqnarray*}
\left\vert f\left( t,x\right) \right\vert  &=&\left\vert \lim_{\delta
\rightarrow 0}A_{\delta }f\left( t,x\right) \right\vert \leq \lim_{\delta
\rightarrow 0}\frac{1}{\left\vert Q_{\delta }\left( t,x\right) \right\vert }%
\int_{Q_{\delta }\left( t,x\right) }\left\vert f\left( t,y\right)
\right\vert dtdy \\
&\leq &\sup_{\delta >0}\frac{1}{\left\vert Q_{\delta }\left( t,x\right)
\right\vert }\int_{Q_{\delta }\left( t,x\right) }\left\vert f\left(
t,y\right) \right\vert dtdy=\mathcal{M}f\left( t,x\right) .
\end{eqnarray*}%
Finally, for $f_{1}=f\chi _{\left\{ \left\vert f\right\vert >\alpha
/2\right\} }$ we have $\mathcal{\tilde{M}}f\leq \mathcal{\tilde{M}}f_{1}+%
\frac{\alpha }{2}$, and by Theorem \ref{ft}(b),%
\begin{equation*}
\left\vert \left\{ \mathcal{\tilde{M}}f>\alpha \right\} \right\vert \leq
\left\vert \left\{ \mathcal{\tilde{M}}f_{1}>\alpha /2\right\} \right\vert
\leq \frac{2c}{\alpha }\int_{\left\vert f\right\vert >\alpha /2}\left\vert
f\right\vert dm\leq \frac{2c}{\alpha }\int_{\mathcal{\tilde{M}}f>\alpha
/2}\left\vert f\right\vert dm.
\end{equation*}
\end{proof}

\subsection{Calderon-Zygmund decomposition}

Assume \textbf{A1, A2} hold. Let $F\subseteq \mathbf{R\times R}^{d}$ be
closed and $O=F^{c}=\mathbf{R}^{d+1}\backslash F.$ For $\left( t,x\right)
\in O$, let%
\begin{equation*}
D\left( t,x\right) =\inf \left\{ \delta >0:Q_{\delta }\left( t,x\right) \cap
F\neq \emptyset \right\} .
\end{equation*}%
For each $\left( t,x\right) \in O$, $D\left( t,x\right) \in \left( 0,\infty
\right) $. Let $K_{0}$ be a constant in Lemma \ref{le1}. We fix $A>1$ so
that $l\left( 1/A\right) <1$ and $\varepsilon >0$ so that $l\left(
2K_{0}\varepsilon \right) <1,\varepsilon \leq \frac{1}{4AK_{0}^{3}}<1.$
Then, denoting $D=D(t,x),$ we have 
\begin{equation*}
\kappa \left( \varepsilon D\right) \leq l\left( 2\varepsilon \right) \kappa
\left( D/2\right) \leq \kappa \left( \frac{D}{2}\right) ,\kappa \left(
\varepsilon D\right) \leq l\left( \varepsilon \right) \kappa \left( D\right)
\leq \kappa \left( D\right) ,
\end{equation*}%
\begin{equation*}
\kappa (D)\leq l\left( 1/A\right) \kappa \left( AD\right) <\kappa \left(
AD\right) ,
\end{equation*}%
and 
\begin{equation*}
\kappa \left( \varepsilon D\right) \leq l\left( 2K_{0}\varepsilon \right)
\kappa \left( D/2K_{0}\right) \leq \kappa \left( D/2K_{0}\right) .
\end{equation*}%
Consider the covering $Q_{\varepsilon D(t,x)}\left( t,x\right) ,\left(
t,x\right) \in O$, of $O$. Let 
\begin{equation*}
Q^{k}=Q_{\varepsilon D(t_{k},x_{k})}\left( t_{k},x_{k}\right) ,k\geq 1,
\end{equation*}
be its maximal disjoint subcollection: for any $Q_{\varepsilon D\left(
t,x\right) }\left( t,x\right) $ there is $k$ so that $Q^{\varepsilon
}(t,x)\cap Q^{k}\neq \emptyset .$ Let 
\begin{equation*}
Q^{\ast k}=Q_{D(t_{k},x_{k})/2}\left( t_{k},x_{k}\right) ,Q^{\ast \ast
k}=Q_{AD\left( t_{k},x_{k}\right) }\left( t_{k},x_{k}\right) .
\end{equation*}%
Note that $Q^{k}\subseteq Q^{\ast k}\subseteq Q_{D(t_{k},x_{k})}\left(
t_{k},x_{k}\right) \subseteq O,Q^{\ast \ast k}\cap F\neq \emptyset $. We
will show that $\cup _{k}Q^{\ast k}=O$. Let $\left( t,x\right) \in O$ and $%
Q_{\varepsilon D\left( t_{k}x_{k}\right) }\left( t_{k},x_{k}\right) \cap
Q_{\varepsilon D(t,x)}\left( t,x\right) \neq \emptyset $ for some $k$. Since 
\begin{eqnarray*}
Q_{\varepsilon D\left( t_{k},x_{k}\right) }\left( t_{k},x_{k}\right)
&\subseteq &Q_{D\left( t_{k},x_{k}\right) }\left( t_{k},x_{k}\right)
\subseteq Q_{AD\left( t_{k},x_{k}\right) }\left( t_{k},x_{k}\right) , \\
Q_{\varepsilon D\left( t,x\right) }\left( t,x\right) &\subseteq
&Q_{D(t,x)/(2K_{0})},
\end{eqnarray*}%
it follows that 
\begin{equation*}
Q_{D(t,x)/(2K_{0})}\left( t,x\right) \cap Q_{AD\left( t_{k},x_{k}\right)
}\left( t_{k},x_{k}\right) \neq \emptyset .
\end{equation*}

We show by contradiction that $AD\left( t_{k},x_{k}\right) \geq D\left(
t,x\right) /2K_{0}.$ If not so, then \thinspace $AD\left( t_{k},x_{k}\right)
<D\left( t,x\right) /2K_{0}$, and, by Lemma \ref{le1}, $Q_{D(t,x)/2K_{0}}%
\left( t,x\right) $ and $Q_{AD\left( t_{k},x_{k}\right) }\left(
t_{k},x_{k}\right) $ are contained in $Q_{_{D\left( t,x\right)
/2}}(t,x)\subseteq O$: a contradiction to $Q_{AD\left( t_{k},x_{k}\right)
}\left( t_{k},x_{k}\right) \cap F\neq \emptyset .$ Therefore $AD\left(
t_{k},x_{k}\right) \geq D\left( t,x\right) /2K_{0}$ and $2AK_{0}\varepsilon
D\left( t_{k},x_{k}\right) \geq \varepsilon D\left( t,x\right) $. Now, $%
Q_{\varepsilon D\left( t_{k}x_{k}\right) }\left( t_{k},x_{k}\right)
\subseteq Q_{2AK_{0}\varepsilon D\left( t_{k}x_{k}\right) }\left(
t_{k},x_{k}\right) $ and $Q_{\varepsilon D\left( t_{k}x_{k}\right) }\left(
t_{k},x_{k}\right) \cap Q_{\varepsilon D(t,x)}\left( t,x\right) \neq
\emptyset \,$. Hence by Lemma \ref{le1}, $Q_{\varepsilon D\left( t,x\right)
} $ is contained in $Q_{2AK_{0}^{2}\varepsilon D\left( t_{k},x_{k}\right)
}\left( t_{k},x_{k}\right) $. Since $2AK_{0}^{2}\varepsilon \leq \frac{1}{%
2K_{0}}$, it follows by Lemma 1 that%
\begin{equation*}
Q_{\varepsilon D\left( t,x\right) }\left( t,x\right) \subseteq
Q_{2AK_{0}^{2}\varepsilon D\left( t_{k},x_{k}\right) }\left(
t_{k},x_{k}\right) \subseteq Q_{D\left( t_{k},x_{k}\right) /2}\left(
t_{k},x_{k}\right) =Q^{\ast k}.
\end{equation*}%
So we proved the following statement.

\begin{lemma}
\label{wh}Assume \textbf{A1, A2} hold. Given a closed nonempty $F$, there
are sequences $Q^{k}$, $Q^{\ast k}$ and $Q^{\ast \ast k}$ in $\mathbb{Q\,\ }$%
having the same center but with radius expanded by the same factor $%
c_{1}^{\ast \ast }>c_{1}^{\ast }>c_{1}\,\ $so that $Q^{k}\subseteq Q^{\ast
k}\subseteq Q^{\ast \ast k}$ (all of them are of the form $Q_{bD\left(
t_{k},x_{k}\right) }\left( t_{k},x_{k}\right) $ with $b=c_{1},c_{1}^{\ast
},c_{1}^{\ast \ast }$ correspondingly) and

(a) the sets $Q^{k}$ are disjoint.

(b) $\cup _{k}Q^{\ast k}=O=F^{c}.$

(c) $Q^{\ast \ast k}\cap F\neq \emptyset $ for each $k$.
\end{lemma}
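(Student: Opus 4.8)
The plan is to run a Whitney-type decomposition of $O=F^{c}$ adapted to the family $\mathbb{Q}$, with the engulfing Lemma \ref{le1} in the role that dyadic nesting plays in the classical argument. First I would attach to each $(t,x)\in O$ the gauge
\begin{equation*}
D(t,x)=\inf\{\delta>0:Q_{\delta}(t,x)\cap F\neq\emptyset\},
\end{equation*}
which satisfies $0<D(t,x)<\infty$: finiteness since $B_{\delta}(x)\uparrow\mathbf{R}^{d}$, $\kappa(\delta)\to\infty$ and $F\neq\emptyset$; positivity since $F$ is closed, $(t,x)\notin F$ and $\kappa(\delta)\to0$. Using \textbf{A1}, \textbf{A2} and the engulfing constant $K_{0}\ge3$ of Lemma \ref{le1}, I would fix once and for all $A>1$ with $l(1/A)<1$ and $\varepsilon>0$ with $l(2K_{0}\varepsilon)<1$ and $\varepsilon\le\frac{1}{4AK_{0}^{3}}$, so that, writing $D=D(t,x)$, the chain
\begin{equation*}
\kappa(\varepsilon D)\le\kappa(D/2K_{0})\le\kappa(D/2)\le\kappa(D)<\kappa(AD)
\end{equation*}
holds; these are exactly the scaling inequalities needed below. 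Then I would cover $O$ by the small sets $Q_{\varepsilon D(t,x)}(t,x)$, $(t,x)\in O$, extract a maximal disjoint subcollection $Q^{k}=Q_{\varepsilon D(t_{k},x_{k})}(t_{k},x_{k})$ by a greedy (Zorn) selection, and set $Q^{\ast k}=Q_{D(t_{k},x_{k})/2}(t_{k},x_{k})$, $Q^{\ast\ast k}=Q_{AD(t_{k},x_{k})}(t_{k},x_{k})$; since $\varepsilon<\frac12<A$ these have the announced comparable enlargement factors and $Q^{k}\subseteq Q^{\ast k}\subseteq Q^{\ast\ast k}$.

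The two easy properties come next. Property (a) is built into the maximal-disjoint selection. For (c), a limit point $(s,y)\in F$ of a minimizing sequence for $D(t_{k},x_{k})$ (compactness, using continuity of $\kappa$ at $D(t_{k},x_{k})$ and $l(1/A)<1$) lies in $Q^{\ast\ast k}$ because $A>1$, so $Q^{\ast\ast k}\cap F\neq\emptyset$; and $D(t_{k},x_{k})/2<D(t_{k},x_{k})$ forces $Q^{\ast k}\subseteq O$. In particular the inclusion $\bigcup_{k}Q^{\ast k}\subseteq O$ of (b) is already established.

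The heart of the proof is the reverse inclusion in (b). Given $(t,x)\in O$, maximality of the subcollection produces $k$ with $Q_{\varepsilon D(t_{k},x_{k})}(t_{k},x_{k})\cap Q_{\varepsilon D(t,x)}(t,x)\neq\emptyset$. Enlarging both via the scaling chain to $Q_{D(t,x)/2K_{0}}(t,x)$ and $Q_{AD(t_{k},x_{k})}(t_{k},x_{k})$ respectively, these still meet, and I would prove $AD(t_{k},x_{k})\ge D(t,x)/2K_{0}$ by contradiction: otherwise Lemma \ref{le1}, with $AD(t_{k},x_{k})$ the smaller radius, puts both enlargements inside $Q_{D(t,x)/2}(t,x)\subseteq O$, contradicting (c) for the index $k$. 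Hence $2AK_{0}\varepsilon D(t_{k},x_{k})\ge\varepsilon D(t,x)$; enlarging $Q^{k}$ to $Q_{2AK_{0}\varepsilon D(t_{k},x_{k})}(t_{k},x_{k})$, which now has the larger radius, and applying Lemma \ref{le1} once more gives $Q_{\varepsilon D(t,x)}(t,x)\subseteq Q_{2AK_{0}^{2}\varepsilon D(t_{k},x_{k})}(t_{k},x_{k})$; finally $2AK_{0}^{2}\varepsilon\le\frac{1}{2K_{0}}$ and the scaling chain yield $Q_{2AK_{0}^{2}\varepsilon D(t_{k},x_{k})}(t_{k},x_{k})\subseteq Q_{D(t_{k},x_{k})/2}(t_{k},x_{k})=Q^{\ast k}$, so $(t,x)\in Q^{\ast k}$.

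I expect the only genuine obstacle to be the bookkeeping in this last step: $A$ and $\varepsilon$ must be chosen so that every use of the engulfing Lemma \ref{le1} is legitimate (all enlargement factors $\le 1/2K_{0}$ and all $l$-factors $\le 1$ simultaneously), which requires care precisely because $\mathbb{Q}$ is only quasi-monotone and no dyadic structure is available; once the parameters are pinned down, everything else is routine.
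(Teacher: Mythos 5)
Your proof is correct and follows the paper's own argument essentially line by line: the same gauge $D(t,x)$, the identical choices of $A$ (with $l(1/A)<1$) and $\varepsilon$ (with $l(2K_0\varepsilon)<1$, $\varepsilon\le\frac{1}{4AK_0^3}$), the same maximal disjoint subcollection of the $Q_{\varepsilon D}$, and the same two applications of the engulfing Lemma \ref{le1} — first in the contradiction yielding $AD(t_k,x_k)\ge D(t,x)/(2K_0)$, then to place $Q_{\varepsilon D(t,x)}(t,x)$ inside $Q^{\ast k}$. The only additions are explicit justifications (positivity/finiteness of $D$, a compactness argument for (c)) of points the paper leaves tacit.
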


\begin{remark}
\label{re3}Assume \textbf{A1, A2} hold and $Q^{k}\subseteq Q^{\ast
k}\subseteq Q^{\ast \ast k}$ be the sequences in $\mathbb{Q}$ from Lemma \ref%
{wh}. It is easy to find a sequence of disjoint measurable sets $C^{k}$ so
that $Q^{k}\subseteq C^{k}\subseteq Q^{\ast k}$ and $\cup _{k}C^{k}=O$. For
example (see Remark, p. 15, in \cite{stein1}),%
\begin{equation*}
C^{k}=Q^{\ast k}\cap \left( \cup _{j<k}C^{j}\right) ^{c}\cap \left( \cup
_{j>k}Q^{j}\right) ^{c}.
\end{equation*}
\end{remark}

Now we derive Calderon-Zygmund decomposition for $\mathbb{Q}.\,\ $

\begin{theorem}
\label{whit}Assume \textbf{A1, A2} hold. Let $f\in L_{1}\left( \mathbf{%
R\times R}^{d}\right) $, $\alpha >0$ and $O_{\alpha }=\left\{ \widetilde{%
\mathcal{M}}f>\alpha \right\} .$ Consider the sets $Q^{k}\subseteq
C^{k}\subseteq Q^{\ast k}\subseteq O$ of Lemma \ref{wh} and Remark \ref{re3}
associated to $O_{\alpha }.$

There is a decomposition $f=g+b$ with%
\begin{equation}
g\left( x\right) =\left\{ 
\begin{array}{cc}
f(x) & \text{if }x\notin O_{\alpha }, \\ 
\frac{1}{\left\vert C^{k}\right\vert }\int_{C^{k}}f & \text{ if }x\in
C^{k},k\geq 1,%
\end{array}%
\right.  \label{2}
\end{equation}%
and with $b=\sum_{k}b_{k}$, where 
\begin{equation}
b_{k}=\chi _{C^{k}}\left[ f\left( x\right) -\frac{1}{\left\vert
C^{k}\right\vert }\int_{C^{k}}f\text{ }\right] ,k\geq 1,  \label{3}
\end{equation}%
(note $C^{k}$ are disjoint, $\cup _{k}C^{k}=O_{\alpha }$). Also,

(i) $\left\vert g\left( x\right) \right\vert \leq c\alpha $ for a.e. $x.$

(ii) support($b_{k})\subseteq Q^{\ast k},$%
\begin{equation*}
\int b_{k}=0\text{ and }\int \left\vert b_{k}\right\vert \leq c\alpha
\left\vert Q^{\ast k}\right\vert .
\end{equation*}

(iii) $\sum_{k}\left\vert Q^{\ast k}\right\vert \leq \frac{c}{\alpha }\int
\left\vert f\right\vert .$
\end{theorem}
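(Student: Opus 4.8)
The proof is the standard Calderón–Zygmund stopping-time construction, carried out relative to the nonstandard family $\mathbb{Q}$; the scaffolding (engulfing property in Lemma \ref{le1}, Vitali covering in Lemma \ref{levi}, the weak-type bound and Lebesgue-type differentiation in Theorem \ref{ft} and Corollary \ref{co1}, and the Whitney-type selection in Lemma \ref{wh} and Remark \ref{re3}) is already in place, so the work is mostly bookkeeping. First I would fix $\alpha>0$ and set $O_\alpha=\{\widetilde{\mathcal M}f>\alpha\}$. Since $f\in L_1$, Theorem \ref{ft}(b) gives $|O_\alpha|<\infty$, and since $\widetilde{\mathcal M}f$ is lower semicontinuous (Remark \ref{re2}), $O_\alpha$ is open; if $O_\alpha$ is empty there is nothing to prove since then $|f|\le\widetilde{\mathcal M}f\le\alpha$ a.e.\ by Corollary \ref{co1}, so $g=f$, $b=0$ works. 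Assuming $O_\alpha\neq\varnothing$, I apply Lemma \ref{wh} with $F=O_\alpha^c$ to obtain the disjoint cubes $Q^k$, their dilates $Q^{\ast k}\subseteq O_\alpha$ with $\bigcup_k Q^{\ast k}=O_\alpha$, and the further dilates $Q^{\ast\ast k}$ with $Q^{\ast\ast k}\cap F\neq\varnothing$. Then I invoke Remark \ref{re3} to interpose disjoint measurable sets $C^k$ with $Q^k\subseteq C^k\subseteq Q^{\ast k}$ and $\bigcup_k C^k=O_\alpha$, and define $g$ and $b=\sum_k b_k$ by the displayed formulas \eqref{2}, \eqref{3}.

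The three assertions are then verified in turn. Property (ii) is immediate: $\operatorname{supp} b_k\subseteq C^k\subseteq Q^{\ast k}$, and $\int b_k=\int_{C^k}f-\frac{|C^k|}{|C^k|}\int_{C^k}f=0$ by construction; for the $L_1$ bound I estimate
\[
\int|b_k|\le 2\int_{C^k}|f|\le 2\int_{Q^{\ast k}}|f|,
\]
and since $Q^{\ast\ast k}$ meets $F=O_\alpha^c$, there is a point in $Q^{\ast\ast k}$ where $\widetilde{\mathcal M}f\le\alpha$; as $Q^{\ast\ast k}\in\mathbb{Q}$ contains that point, the definition of $\widetilde{\mathcal M}$ gives $\int_{Q^{\ast k}}|f|\le\int_{Q^{\ast\ast k}}|f|\le\alpha|Q^{\ast\ast k}|$, and the engulfing estimate of Lemma \ref{le1} (applied to the pair $Q^{\ast k}\subseteq Q^{\ast\ast k}$, whose radii differ by the fixed factor $c_1^{\ast\ast}/c_1^\ast$) bounds $|Q^{\ast\ast k}|$ by a constant $c=c(l,K_0)$ times $|Q^{\ast k}|$, yielding $\int|b_k|\le c\alpha|Q^{\ast k}|$. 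Property (iii) follows by summing the same chain over $k$ and using disjointness of the $C^k$: since $\sum_k|Q^{\ast k}|\le c\sum_k|C^k|$ (again Lemma \ref{le1}, now for $C^k\subseteq Q^{\ast k}$ — or more simply $|Q^{\ast k}|\le c|Q^k|\le c|C^k|$), and $\bigcup_k C^k=O_\alpha$, we get $\sum_k|Q^{\ast k}|\le c|O_\alpha|\le\frac{c}{\alpha}\int_{\{\widetilde{\mathcal M}f>\alpha/2\}}|f|\le\frac{c}{\alpha}\int|f|$ by Corollary \ref{co1}.

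Property (i) is the one requiring a little care and is the main (mild) obstacle. For $x\notin O_\alpha$, $g(x)=f(x)$ and $|f(x)|\le\widetilde{\mathcal M}f(x)\le\alpha$ a.e.\ by Corollary \ref{co1}. For $x\in C^k$, $|g(x)|=\frac{1}{|C^k|}\bigl|\int_{C^k}f\bigr|\le\frac{1}{|C^k|}\int_{C^k}|f|\le\frac{1}{|C^k|}\int_{Q^{\ast k}}|f|$; bounding the last integral as above by $\alpha|Q^{\ast\ast k}|$ and then $|Q^{\ast\ast k}|\le c|Q^{\ast k}|\le c^{2}|Q^k|\le c^2|C^k|$ via Lemma \ref{le1} gives $|g(x)|\le c\alpha$. (The slight subtlety is that the $C^k$ are not themselves in $\mathbb{Q}$, so one must route every averaging estimate through the genuine cubes $Q^k\subseteq C^k\subseteq Q^{\ast k}\subseteq Q^{\ast\ast k}$ and absorb the comparison constants, all of which are controlled by $K_0$ and $l$ from \textbf{A2}.) Since every $x\in\mathbf R^{d+1}$ is either outside $O_\alpha$ or in exactly one $C^k$, (i) holds a.e., and the decomposition is complete. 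Finally I note that $g=f-b\in L_1$ and $b=\sum_k b_k$ converges in $L_1$ because $\sum_k\int|b_k|\le c\alpha\sum_k|Q^{\ast k}|\le c\int|f|<\infty$, so all the manipulations are legitimate.
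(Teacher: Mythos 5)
Your proof is correct and takes essentially the same route as the paper's: both apply Lemma \ref{wh} and Remark \ref{re3} to $O_\alpha=\{\widetilde{\mathcal M}f>\alpha\}$, define $g$ and $b_k$ by the same formulas, derive (i) from Corollary \ref{co1} together with the fact that $Q^{\ast\ast k}$ meets $O_\alpha^c$ so that the average of $|f|$ over $Q^{\ast\ast k}$ is $\le\alpha$, and then transfer the bound to $C^k$ via the volume comparisons $|Q^k|\le|C^k|\le|Q^{\ast k}|\le|Q^{\ast\ast k}|\le c|Q^k|$ furnished by \textbf{A2}. Your additional remarks (the trivial case $O_\alpha=\varnothing$, the observation that the $C^k$ are not in $\mathbb Q$ so one must route averages through the genuine cubes, and the $L_1$-convergence of $\sum_k b_k$) are harmless refinements of the same argument; the only slight looseness is that the volume comparison between $Q^{\ast k}$ and $Q^{\ast\ast k}$ is not literally Lemma \ref{le1} (which treats the specific factor $K_0$), but the identical \textbf{A2} computation for the fixed ratio $c_1^{\ast\ast}/c_1^{\ast}$.
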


\begin{proof}
The set $O_{\alpha }=\left\{ \widetilde{\mathcal{M}}f>\alpha \right\} $ is
open. We cab apply Lemma \ref{wh} and Remark \ref{re3} to it and consider
the sets $Q^{k}\subseteq C^{k}\subseteq Q^{\ast k}\subseteq E_{\alpha }$
with $C^{k}$ disjoint and $\cup _{k}C^{k}=E_{\alpha }.$

Define $g$ by (\ref{3}). Hence $f=g+\sum_{k}b_{k}$ with $b_{k}$ given by (%
\ref{3}). Obviously%
\begin{equation*}
\sum_{k}\left\vert Q^{k}\right\vert \leq \left\vert E_{\alpha }\right\vert .
\end{equation*}

(i) By Corollary \ref{co1}, $\left\vert f\left( x\right) \right\vert \leq
\alpha $ a.e. on $O_{\alpha }^{c}=\left\{ \widetilde{\mathcal{M}}f\left(
t,x\right) \leq \alpha \right\} $. Hence$:$ so $\left\vert g\left( x\right)
\right\vert \leq \alpha $ a.e. on $E_{\alpha }^{c}$. On the other hand, if $%
Q^{\ast \ast k}\in \mathbb{Q}$ is the sequence of Lemma \ref{wh}, then 
\begin{equation*}
\frac{1}{\left\vert Q^{\ast \ast k}\right\vert }\int_{Q^{\ast \ast
k}}\left\vert f\right\vert \leq \alpha
\end{equation*}%
because $Q^{\ast \ast k}\cap O_{\alpha }^{c}\neq \emptyset $ and $\widetilde{%
\mathcal{M}}f\left( t,x\right) \leq \alpha $ on $O_{\alpha }^{c}$ (the
definition of $\widetilde{\mathcal{M}}$ implies it). Since $\left\vert
Q^{k}\right\vert \leq \left\vert C^{k}\right\vert \leq \left\vert Q^{\ast
k}\right\vert \leq \left\vert Q^{\ast \ast k}\right\vert \leq l\left( \frac{%
c_{1}^{\ast \ast }}{c_{1}}\right) \left\vert Q^{k}\right\vert $ and $%
C^{k}\subseteq Q^{\ast \ast k}$, it follows that%
\begin{equation*}
\left\vert g\right\vert \leq \bar{c}\alpha .
\end{equation*}

(ii) \ Only inequality is not trivial:%
\begin{equation*}
\int \left\vert b_{k}\right\vert \leq 2\int_{C^{k}}\left\vert f\right\vert
\leq 2\left\vert Q^{\ast \ast k}\right\vert \frac{1}{\left\vert Q^{\ast \ast
k}\right\vert }\int_{Q^{\ast \ast k}}\left\vert f\right\vert \leq c\alpha
\left\vert Q^{\ast k}\right\vert .
\end{equation*}

(iii) We have 
\begin{equation*}
\left\vert O_{\alpha }\right\vert =\left\vert \left\{ \widetilde{\mathcal{M}}%
f\left( t,x\right) >\alpha \right\} \right\vert \geq \sum_{k}\left\vert
Q^{k}\right\vert \geq \tilde{c}\sum_{k}\left\vert Q^{\ast k}\right\vert
\end{equation*}%
and the inequality follows by Theorem \ref{ft}.
\end{proof}

\subsection{$L_{p}$-estimates}

Let now%
\begin{equation*}
\left( Tf\right) \left( t,x\right) =\int_{\mathbf{R}^{d+1}}K\left(
t,x,s,y\right) f\left( s,y\right) dsdy,(t,x)\in \mathbf{R}^{d+1}.
\end{equation*}%
We assume that $T$ is defined and bounded on $L_{q}$:%
\begin{equation}
\left\vert Tf\right\vert _{L_{q}}\leq C\left\vert f\right\vert _{L_{q}},f\in
L_{q}.  \label{f1}
\end{equation}%
In addition, we assume that there is a constant $A>0$ so that (denoting $%
\left( s_{k},y_{k}\right) $ the center of $Q_{k}^{\ast }$, 
\begin{equation}
\sup_{(s,y)\in Q_{k}^{\ast }}\int_{(Q_{k}^{\ast \ast })^{c}}|K\left(
t,x,s,y\right) -K\left( t,x,s_{k},y_{k}\right) |dtdx\leq A.  \label{f22}
\end{equation}

The assumption (\ref{f22}) holds if there are constants $c>1,A>0$ so that
for any $Q_{\delta }\in \mathbb{Q}$, 
\begin{equation}
\int_{\mathbf{R}^{d+1}\backslash Q_{c\delta }\left( s,y\right) }\left\vert
K\left( t,x,\bar{s},\bar{y}\right) -K\left( t,x,s,y\right) \right\vert
dxdt\leq A\text{ }\forall (\bar{s},\bar{y})\in Q_{\delta }\left( s,y\right)
\label{f23}
\end{equation}

\begin{theorem}
\label{t3}Let \textbf{A1, A2}, (\ref{f1}) and (\ref{f23}) hold. Then $T$ is
bounded in $L_{p}$-norm on $L_{p}\cap L_{q}$ if $1<p<q$. More precisely,%
\begin{equation*}
\left\vert T\left( f\right) \right\vert _{L_{p}}\leq A_{p}\left\vert
f\right\vert _{L_{p}},f\in L_{p}\cap L_{q},1<p<q,
\end{equation*}%
where $A_{p}$ depends only on the constant $A$ and $p$.
\end{theorem}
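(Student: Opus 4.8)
The plan is to follow the classical Calderón–Zygmund scheme from \cite{stein1}, Chapter I, but with every "cube" replaced by a set $Q_\delta(t,x)\in\mathbb{Q}$, and using the covering machinery already established: the engulfing Lemma \ref{le1}, the Vitali covering Lemma \ref{levi}, the maximal-function bounds in Theorem \ref{ft}, and above all the Calderón–Zygmund decomposition in Theorem \ref{whit}. It suffices to prove the weak-type $(1,1)$ bound
\begin{equation*}
\left\vert \left\{ \left\vert Tf\right\vert >\alpha \right\} \right\vert \leq \frac{A'}{\alpha }\left\vert f\right\vert _{L_1},\qquad f\in L_1\cap L_q,\ \alpha>0,
\end{equation*}
with $A'$ depending only on $A$, $q$ and the structural constants $K_0,l$; once this is in hand, the Marcinkiewicz interpolation theorem applied between the weak-$(1,1)$ estimate and the strong-$(q,q)$ bound \eqref{f1} gives the strong $(p,p)$ bound for all $1<p<q$, which is the assertion. (The constant in the interpolation depends only on $p,q$ and $A'$, hence only on $A$ and $p$ as claimed, after absorbing $q$.)

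To get the weak-$(1,1)$ bound, fix $\alpha>0$ and apply Theorem \ref{whit} at height $\alpha$: write $f=g+b$, $b=\sum_k b_k$, with $\operatorname{supp}b_k\subseteq Q^{*k}$, $\int b_k=0$, $\int|b_k|\le c\alpha|Q^{*k}|$, $\sum_k|Q^{*k}|\le\frac{c}{\alpha}|f|_{L_1}$, and $|g|\le\bar c\alpha$ a.e. (also $|g|_{L_1}\le C|f|_{L_1}$ since $g$ averages $f$ on the $C^k$ and equals $f$ off $O_\alpha$). For the good part, since $g\in L_q$ with $|g|_{L_q}^q\le(\bar c\alpha)^{q-1}|g|_{L_1}\le C\alpha^{q-1}|f|_{L_1}$, Chebyshev plus \eqref{f1} gives $|\{|Tg|>\alpha/2\}|\le (2/\alpha)^q\,C^q|g|_{L_q}^q\le \frac{C'}{\alpha}|f|_{L_1}$. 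For the bad part, let $O^{**}=\bigcup_k Q^{**k}$; by (iii) of Theorem \ref{whit} and the engulfing Lemma \ref{le1} (which bounds $|Q^{**k}|$ by a fixed multiple of $|Q^{k}|\le|Q^{*k}|$), $|O^{**}|\le\frac{C}{\alpha}|f|_{L_1}$, so it remains to estimate $|\{(t,x)\notin O^{**}:|Tb(t,x)|>\alpha/2\}|$. On the complement of $O^{**}$, for $(t,x)\notin Q^{**k}$ the cancellation $\int b_k=0$ lets us write
\begin{equation*}
Tb_k(t,x)=\int_{Q^{*k}}\bigl[K(t,x,s,y)-K(t,x,s_k,y_k)\bigr]b_k(s,y)\,ds\,dy,
\end{equation*}
where $(s_k,y_k)$ is the center of $Q^{*k}$; hence
\begin{equation*}
\int_{(O^{**})^c}|Tb_k|\,dt\,dx\le \int_{Q^{*k}}|b_k(s,y)|\Bigl(\int_{(Q^{**k})^c}|K(t,x,s,y)-K(t,x,s_k,y_k)|\,dt\,dx\Bigr)ds\,dy\le A\int|b_k|\le cA\alpha|Q^{*k}|,
\end{equation*}
using exactly the Hörmander-type hypothesis \eqref{f23} (note $Q^{*k}=Q_\delta(s_k,y_k)$ and $Q^{**k}=Q_{c\delta}(s_k,y_k)$ for the appropriate $\delta$ and the expansion factor built into Lemma \ref{wh}, so \eqref{f23} applies with $(\bar s,\bar y)=(s,y)\in Q^{*k}$). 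Summing over $k$ and applying Chebyshev, $|\{(O^{**})^c:|Tb|>\alpha/2\}|\le\frac{2}{\alpha}\sum_k cA\alpha|Q^{*k}|\le 2cA\cdot\frac{c}{\alpha}|f|_{L_1}$. Combining the three pieces yields the weak-$(1,1)$ bound.

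The main obstacle is the bookkeeping forced by the fact that the sets $Q_\delta(t,x)$ are not nested in $\delta$ in the usual way: one must invoke the engulfing Lemma \ref{le1} at each point where the classical proof would use monotonicity of cubes — in particular to pass from $Q^{*k}$ to $Q^{**k}$ with comparable measure, to match the dilation factor $c$ in \eqref{f23} with the ratio $c_1^{**}/c_1$ of Lemma \ref{wh}, and to control $|O^{**}|$. This is precisely why hypotheses \textbf{A1, A2} (equivalently, that $\kappa$ is a scaling function with factor $l$) are imposed, and why the covering and maximal-function lemmas were developed beforehand; granting those, each step is the standard argument. One should also check the harmless technical point that $Tf$ is well defined for $f\in L_p\cap L_q$ and that the density of $L_1\cap L_q$-type functions justifies the a.e.\ manipulations, but this is routine given \eqref{f1}.
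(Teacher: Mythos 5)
Your proposal is correct and follows essentially the same route as the paper's proof: reduce to a weak-$(1,1)$ estimate via the Calderón--Zygmund decomposition of Theorem \ref{whit}, bound the good part by Chebyshev and the $L_q$ bound \eqref{f1}, bound the bad part off $\bigcup_k Q^{\ast\ast k}$ using the cancellation $\int b_k=0$ together with the H\"ormander condition \eqref{f23}, and conclude by Marcinkiewicz interpolation. The only cosmetic difference is that you estimate $\left\vert g\right\vert_{L_q}^q$ directly from $\left\vert g\right\vert\leq\bar c\alpha$ and $\left\vert g\right\vert_{L_1}\leq\left\vert f\right\vert_{L_1}$, whereas the paper splits the integral over $(\cup_k Q_k^{\ast})^c$ and $\cup_k Q_k^{\ast}$; both yield the same bound $C\alpha^{q-1}\left\vert f\right\vert_{L_1}$.
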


\begin{proof}
As \cite{stein1} says, it is enough to prove that%
\begin{equation*}
m\left( \left\vert Tf\right\vert >\alpha \right) \leq \frac{A^{\prime }}{%
\alpha }\int \left\vert f\right\vert dx,f\in L_{1}\cap L_{q},\alpha >0,
\end{equation*}%
where $A^{\prime }$ depends on $A$.

For a large constant $c^{\prime }$ (to be determined) we estimate $m\left(
\left\vert Tf\right\vert >c^{\prime }\alpha \right) $. For a fixed $\alpha
>0 $ we consider the decomposition $f=g+b$ in Theorem \ref{whit}. First note
that%
\begin{equation*}
m\left( \cup _{n}Q_{n}^{\ast \ast }\right) \leq \sum_{n}m\left( Q_{n}^{\ast
\ast }\right) \leq c\sum_{n}m\left( Q_{n}^{\ast }\right) \leq \frac{c}{%
\alpha }\int \left\vert f\right\vert .
\end{equation*}

It is enough to show that%
\begin{equation*}
\left\vert \left\{ \left\vert Tg\right\vert >\left( c^{\prime }/2\right)
\alpha \right\} \right\vert +\left\vert \left\{ \left\vert Tb\right\vert
>\left( c^{\prime }/2\right) \alpha \right\} \right\vert \leq A^{\prime
}/\alpha \int \left\vert f\right\vert dx.
\end{equation*}

First $g\in L_{q}$. Indeed,%
\begin{equation*}
\int \left\vert g\right\vert ^{q}dx=\int_{\left( \cup _{k}Q_{k}^{\ast
}\right) ^{c}}|g|^{q}+\int_{\cup _{k}C_{k}}\left\vert g\right\vert ^{q}\leq
c\alpha ^{q-1}\int |f|
\end{equation*}%
because%
\begin{eqnarray*}
\int_{\left( \cup _{k}Q_{k}^{\ast }\right) ^{c}}\left\vert g\right\vert ^{q}
&\leq &c\alpha ^{q-1}\int_{\left( \cup _{k}Q_{k}^{\ast }\right) ^{c}}|f|, \\
\int_{\cup _{k}Q_{k}^{\ast }}\left\vert g\right\vert ^{q} &\leq &c\alpha
^{q}\sum_{k}\left\vert Q_{k}^{\ast }\right\vert \leq c\alpha ^{q-1}\int
\left\vert f\right\vert .
\end{eqnarray*}%
By Chebyshev inequality,%
\begin{eqnarray*}
&&\left\vert \left\{ \left\vert Tg\right\vert >\left( c^{\prime }/2\right)
\alpha \right\} \right\vert \leq \left( \frac{c^{\prime }\alpha }{2}\right)
^{-q}\left\vert Tg\right\vert _{L_{q}}^{q}\leq \left( \frac{c^{\prime
}\alpha }{2}\right) ^{-q}A^{q}\left\vert g\right\vert _{L_{q}}^{q} \\
&\leq &c\left( \frac{c^{\prime }\alpha }{2}\right) ^{-q}A^{q}\alpha
^{q-1}\int \left\vert f\right\vert \leq \frac{A^{\prime }}{\alpha }%
\left\vert f\right\vert _{L_{1}}.
\end{eqnarray*}

Now,%
\begin{equation*}
\int_{\left( \cup _{k}Q_{k}^{\ast \ast }\right) ^{c}}|Tb|\leq
\sum_{k}\int_{\left( Q_{k}^{\ast \ast }\right) ^{c}}\left\vert
Tb_{k}\right\vert
\end{equation*}%
Since for $x\notin Q_{k}^{\ast },$ denoting by $(s_{k},y_{k})$ the center of 
$Q_{k}^{\ast }$ (and $Q_{k}^{\ast \ast }$), we have. denoting $%
f_{k}=1/\left\vert C^{k}\right\vert \int_{C_{k}}f$,%
\begin{eqnarray*}
Tb_{k} &=&\int_{C_{k}}K\left( t,x,s,y\right) [f\left( s,y\right) -f_{k}]dsdy
\\
&=&\int_{C_{k}}[K\left( t,x,s,y\right) -K\left( t,x,s_{k},y_{k}\right)
][f\left( s,y\right) -f_{k}]dsdy
\end{eqnarray*}%
and%
\begin{eqnarray*}
\int_{(Q_{k}^{\ast \ast })^{c}}\left\vert Tb_{k}\right\vert dx &\leq &\int
~\left\vert b_{k}\right\vert dsdy\sup_{(s,y)\in Q_{k}^{\ast
}}\int_{(Q_{k}^{\ast \ast })^{c}}|K\left( t,x,s,y\right) -K\left(
t,x,s_{k},y_{k}\right) |dtdx \\
&\leq &cA\alpha \left\vert Q_{k}^{\ast }\right\vert .
\end{eqnarray*}%
Hence%
\begin{equation*}
\int_{\left( \cup _{k}Q_{k}^{\ast \ast }\right) ^{c}}|Tb|\leq cA\alpha
\sum_{k}\left\vert Q_{k}^{\ast }\right\vert \leq cA\int \left\vert
f\right\vert .
\end{equation*}
\end{proof}

\end{document}